\def\a{\alpha}
\def\b{\beta}
\def\R{\mathbb R}
\def\dd{\Delta}
\def\d{\delta}
\def\F{{\mathbb F}}
\def\G{{\mathbb G}}
\def\IK{I\!\!K}
\def\P{{\mathbb P}}
\def\l{\lambda}
\def\labda1{\lambda_1}
\def\labda2{\lambda_2}
\def\m{\mu}
\def\e{\varepsilon}
\def\f{\phi}
\def\t{\tau}
\def\s{\sigma}
\def\comment#1{\relax}
\def\=in{\mathop{\rm =}}
\newtheorem{theorem}{Theorem}[section]
\newtheorem{lemma}{Lemma}[section] 
\newtheorem{example}{Example}[section]
\newtheorem{remark}{Remark}[section]
\numberwithin{equation}{section}
\theoremstyle{plain}
\def\m{\mu}
\def\P{{\mathbb P}}
\def\F{{\mathbb F}}
\def\G{{\mathbb G}}
\begin{document}

\begin{frontmatter}
\title{Nonparametric confidence intervals for monotone functions}
\runtitle{Nonparametric isotonic confidence intervals}

\begin{aug}
\author{\fnms{Piet} \snm{Groeneboom}\corref{}\ead[label=e1]{P.Groeneboom@tudelft.nl}
\ead[label=u1,url]{http://dutiosc.twi.tudelft.nl/\textasciitilde pietg/}}
and\ \
\author{\fnms{Geurt} \snm{Jongbloed}\ead[label=e2]{G.Jongbloed@tudelft.nl}
\ead[label=u2,url]{http://dutiosc.twi.tudelft.nl/\textasciitilde geurt/}}
\affiliation{Delft University of Technology}
\runauthor{P.\ Groeneboom and G.\ Jongbloed}
\end{aug}

\begin{abstract}
We study nonparametric isotonic confidence intervals for monotone functions. In \cite{mouli_jon:01} pointwise confidence intervals, based on likelihood ratio tests using the restricted and unrestricted MLE in the current status model, are introduced. We extend the method to the treatment of other models with monotone functions, and demonstrate our method by a new proof of the results in \cite{mouli_jon:01} and also by constructing confidence intervals for monotone densities, for which still theory had to be developed. For the latter model we prove that the limit distribution of the LR test under the null hypothesis is the same as in the current status model. We compare the confidence intervals, so obtained, with confidence intervals using the smoothed maximum likelihood estimator (SMLE), using bootstrap methods. The ``Lagrange-modified'' cusum diagrams, developed here, are an essential tool both for the computation of the restricted MLEs and for the development of the theory for the confidence intervals, based on the LR tests.
\end{abstract}

\begin{keyword}[class=AMS]
\kwd[Primary ]{62G05}
\kwd{62N01}
\kwd[; secondary ]{62G20}
\end{keyword}

\begin{keyword}
\kwd{LR test}
\kwd{MLE}
\kwd{confidence intervals}
\kwd{isotonic estimate}
\kwd{smoothed MLE}
\kwd{bootstrap}
\end{keyword}

\end{frontmatter}

\section{Introduction}
\label{section:intro}
\setcounter{equation}{0}
In many situations one would like to estimate functions under the condition that they are monotone. Apart from giving algorithms for computing such estimates and from deriving their (usually asymptotic) distribution theory, it is also important to construct confidence intervals. These intervals can be uniform (in which case they are usually called confidence bands) as well as pointwise.

In this paper we consider two methods to obtain pointwise confidence intervals for distribution functions and monotone densities, based on nonparametric estimators. One approach, that of a (nonparametric) likelihood ratio (LR) test, based on the maximum likelihood estimator (MLE) in the model, is related to the one taken in \cite{mouli_jon:01} and \cite{mouli_jon:05}. The other approach, using a smoothed maximum likelihood estimator (SMLE) is based on an estimator introduced in \cite{piet_geurt_birgit:10} and further analyzed in \cite{piet_geurt:14}. Our methods can also be applied to monotone nonparametric least squares estimates of monotone regression functions. 

There are some important differences between the approaches, based on the MLE and SMLE, respectively. How appropriate it is to use the MLE will largely depend on whether one expects (or allows) that the underlying monotone function will have jumps.
Secondly, the bias of the MLE does not play a role in the construction of the confidence intervals based on the MLE. But if one constructs confidence intervals, using the SMLE with an optimal bandwidth, the bias will not be negligible in the limiting distribution. There is an extensive literature on how to deal with the bias in nonparametric function estimation, some approaches use undersmoothing, other approaches oversmoothing. A recent paper, discussing this literature and giving a solution for confidence bands, is \cite{Hall_Horowitz:13}. We will use undersmoothing, as suggested in \cite{hall:92}.

The method of constructing confidence intervals, based on the likelihood ratio test for the MLE, and the method using the SMLE are both asymptotically pivotal. For the method, based on the likelihood ratio test for the MLE, this arises from the universality properties of likelihood ratio tests. For the intervals, based on the SMLE, this is based on using bootstrap intervals for a ``studentized" statistic, together with the undersmoothing. We now first describe two models that will be studied thoroughly in this paper.

\begin{example} (Monotone density functions)
\label{example:monotone_dens}
{\rm The classical example of a monotone estimate of a monotone function is the so-called {\it Grenander estimator}. Let $X_1,\dots,X_n$ be a sample of random variables, generated by a decreasing density $f_0$ on $[0,\infty)$. The MLE $\hat f_n$ of $f_0$ is the Grenander estimator, which is by definition the left derivative of the least concave majorant of the empirical distribution function $\F_n$ of $X_1,\dots,X_n$, as proved in \cite{Grenander:56} (see also Lemma 2.2 in \cite{piet_geurt:14}). This is also the first example in \cite{mouli_jon:01}, where there is the (implicit) conjecture that pointwise confidence intervals, based on the Grenander estimate, will have similar properties as the confidence intervals for the current status model (see the next example), based on a likelihood ratio test for the MLE. The difficulty in proving this result for the monotone density model resides in the constraint that the density integrates to $1$, a condition which does not play a role in constructing LR tests for the current status model. We shall prove that the conjecture in \cite{mouli_jon:01} is correct and that one can use the same critical values as in the current status model in the construction of the asymptotic confidence intervals. We  also compare the confidence intervals, obtained in this way, with confidence intervals, based on the SMLE, using bootstrap methods and asymptotic normality of the SMLE.
}
\end{example}

\begin{example} (The current status model)
\label{example:curstat_model}
{\rm Consider a sample $X_1,X_2,\ldots,X_n$, drawn from a distribution with distribution function $F_0$. Instead of observing the $X_i$'s (which can be thought of as an event time, such as `getting infected'), one only observes for each $i$ whether or not $X_i\le T_i$ for some random (inspection time) $T_i$ (independently of the other $T_j$'s and all $X_j$'s). More formally, instead of observing $X_i$'s, one observes
\begin{equation}
\label{eq:obsIC1}
(T_i,\Delta_i)=(T_i,1_{[X_i\le T_i]}),\,\,1\le i\le n.
\end{equation}
One could say that the $i$-th observation represents the {\it current status} of item $i$ at time $T_i$.

The problem is to estimate the unknown distribution function $F$ based on the data given in (\ref{eq:obsIC1}). Denote the ordered realized $T_i$'s by $t_1<t_2<\ldots<t_n$ and the associated realized values of the $\Delta_i$'s by $\delta_1,\ldots,\delta_n$. For this problem the log likelihood function in $F$ (conditional on the $T_i$'s) is given by
\begin{equation}
\label{eq:loglikICI}
\ell(F)=\sum_{i=1}^n \left\{\delta_i\log F(t_i)+(1-\delta_i)\log(1-F(t_i))\right\}.
\end{equation}

The MLE maximizes $\ell$ over the class of {\it all} distribution functions. Since distribution functions are by definition nondecreasing, the problem belongs to the class of problems we want to study. As can be seen from the structure of (\ref{eq:loglikICI}), the value of $\ell$ only depends on the values that $F$ takes at the observed time points $t_i$; the values of $F$ in between are not relevant as long as $F$ is nondecreasing. Hence one can choose to consider only distribution functions that are constant between successive observed time points $t_i$. Lemma \ref{lem:charMLECS} below shows that this estimator can be characterized in terms of a greatest convex minorant of a certain diagram of points.

The main result of \cite{mouli_jon:01} is that confidence intervals, based on an LR test for the MLE, can be constructed, and that this is a pivotal way of constructing asymptotic confidence intervals, since the limit distribution does not depend on the parameters (under certain conditions). We will give a new proof, which is in line with our proof for the monotone density model.
}
\end{example}

There are numerous other models where our approach can be adopted. Examples include the model where one has a monotone hazard rate and right censored observations (see Sections 2.6 and 11.6 in \cite{piet_geurt:14}), the competing risk model with current status observations (see \cite{GroeneboomMaathuisWellner08a}) and monotone regression.

The methods based on the LR tests for the MLEs in the context of Example \ref{example:monotone_dens} and \ref{example:curstat_model} follow the same line of argument, where, in both cases, an essential role is played by the penalization parameter $\hat\m_n$, which is of order $O_p(n^{-2/3})$.  Our methods rely on cumulative sum (cusum) diagrams which could be called `Lagrange-modified' cusum diagrams, since they incorporate the Lagrange multipliers for the penalties. Asymptotic distribution theory is derived from the asymptotic properties of the Lagrange multipliers, used to construct these cusum diagrams. Once this has been done, the theory for the confidence intervals follows.

\section{Confidence intervals for the current status model}
\label{section:CI_current_status}
\setcounter{equation}{0}

The following lemma characterizes the unrestricted MLE in the current status model. This is Example \ref{example:curstat_model} in Section \ref{section:intro}, and we use the notation introduced there.

\begin{lemma}[Lemma 2.7 in \cite{piet_geurt:14}]
\label{lem:charMLECS}
Consider the cumulative sum diagram consisting of the points $P_0=(0,0)$ and
\begin{equation}
\label{cusum_curstat}
P_i=\left(i,\sum_{j=1}^i\delta_j\right),\,\,\,1\le i\le n,
\end{equation}
recalling that the $\delta_i$'s correspond to the $t_i$'s, which are sorted.
Then the unrestricted MLE $\hat F_n$ is given at the point $t_i$ by the left derivative of the greatest convex minorant of this diagram of points, evaluated at the point $i$. This maximizer is unique among all subdistribution functions with mass concentrated on the inspection times $t_1,\ldots,t_n$.
\end{lemma}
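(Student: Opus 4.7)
The plan is to reduce the infinite-dimensional problem to a finite-dimensional concave program in the values $y_i = F(t_i)$, identify its solution with the standard isotonic regression of the $\delta_i$'s, and finally match this with the slope process of the greatest convex minorant (GCM) of the cusum diagram.

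First I would note that the log-likelihood $\ell$ depends on $F$ only through $(y_1,\dots,y_n) = (F(t_1),\dots,F(t_n))$, so maximizing $\ell$ over all (sub)distribution functions is equivalent to maximizing
\[
\Phi(y_1,\dots,y_n) \;=\; \sum_{i=1}^{n}\bigl\{\delta_i\log y_i + (1-\delta_i)\log(1-y_i)\bigr\}
\]
over the polytope $\{0\le y_1\le\cdots\le y_n\le 1\}$. Since $\Phi$ is strictly concave on $(0,1)^n$, any maximizer is unique at the level of these values; extending a maximizing vector to a subdistribution function that is constant between the $t_i$'s (jumps only at observed inspection times) gives existence of a unique maximizer in the claimed class.

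Next I would write down the KKT conditions, introducing multipliers $\lambda_i\ge 0$ for the inequality constraints $y_i\le y_{i+1}$, $i=1,\dots,n-1$, with the convention $\lambda_0=\lambda_n=0$. Stationarity in coordinate $i$ becomes
\[
\frac{\delta_i}{\hat y_i}-\frac{1-\delta_i}{1-\hat y_i} \;=\; \lambda_i-\lambda_{i-1},
\]
and complementary slackness gives $\lambda_i(\hat y_{i+1}-\hat y_i)=0$. I would then partition $\{1,\dots,n\}$ into maximal constancy blocks of $\hat y$. On such a block $B=\{s,\dots,t\}$ the multipliers $\lambda_s,\dots,\lambda_{t-1}$ need not vanish, but summing the stationarity relations across $B$ telescopes the Lagrange terms (since $\lambda_{s-1}=\lambda_t=0$ whenever $B$ is a true constancy block), yielding
\[
\hat y_B \;=\; \frac{1}{|B|}\sum_{j\in B}\delta_j \;=\; \bar\delta_B,
\]
which is precisely the isotonic-regression value on $B$. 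Since $\bar\delta_B\in[0,1]$ and equals $0$ or $1$ only in trivial degenerate cases that are easily handled separately, the interior-solution assumption is justified.

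The main step is then the geometric identification: the block-averaging solution $\hat y_i=\bar\delta_{B(i)}$ coincides with the left derivative at $i$ of the greatest convex minorant of the cusum diagram $\{P_0,P_1,\dots,P_n\}$. The cleanest route I would take is to verify directly that the GCM slopes satisfy the KKT system. The GCM is piecewise linear with vertices (touch points with the cusum diagram) located at the block boundaries, so its slopes are block-constant and equal to the block averages $\bar\delta_B$ by definition; the induced multipliers $\lambda_i\ge 0$ are nonnegative exactly because the GCM lies pointwise below the cusum diagram and its slopes are nondecreasing. Uniqueness of the maximizer of $\Phi$ then forces $\hat F_n(t_i)$ to equal these GCM slopes. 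The subtle points in this last step are the handling of blocks where the average hits the boundary of $[0,1]$ and the verification that \emph{left} derivatives are the correct choice (forced by the convention $y_i=F(t_i)$ with $F$ left-continuous from above at jumps and the direction of the inequality constraints); both are routine once the block decomposition is in place.
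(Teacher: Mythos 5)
Your argument is correct, and it is the standard proof of this characterization; note, though, that the paper itself does not prove Lemma \ref{lem:charMLECS} at all but defers to \cite{rwd:88} and \cite{piet_geurt:14}, so the relevant comparison is with those proofs and with the paper's own treatment of the analogous \emph{restricted} problem in Lemma \ref{lemma:one_lambda_curstat}. There the authors work with the Fenchel conditions for maximizing a concave function over a convex cone, tested against the cone's \emph{generators} $g_j=(0,\dots,0,1,\dots,1)$, i.e.\ the conditions $\langle\nabla\ell(F),g_j\rangle\le0$ together with $\langle\nabla\ell(F),F\rangle=0$; these are exactly the cumulative-sum inequalities that say the cusum diagram lies above its greatest convex minorant, with equality at the touch points. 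You instead attach a KKT multiplier to each facet constraint $y_i\le y_{i+1}$ and telescope over maximal constancy blocks. The two are dual descriptions of the same cone and yield the same block-average solution, so the difference is one of bookkeeping: the generator route gives the max--min/GCM identification in one step (and is the form the paper reuses when a Lagrange penalty is added), while your facet route makes the block decomposition and the value $\bar\delta_B$ on each block more transparent. Two small points you flag but should make explicit if writing this out: the constraints $y_1\ge0$ and $y_n\le1$ never carry active multipliers that alter the solution, because block averages of $0$--$1$ data automatically lie in $[0,1]$ (and the effective domain of $\ell$ already excludes $y_i=0$ with $\delta_i=1$ and $y_i=1$ with $\delta_i=0$); and strict concavity gives uniqueness only of the vector $(\hat y_1,\dots,\hat y_n)$, which is why the lemma claims uniqueness only within the class of subdistribution functions supported on $t_1,\dots,t_n$.
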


\begin{remark}
\label{remark:leftcont_curstat}
{\rm
The {\it left} derivative of the convex minorant at $P_i$ determines the value of $\hat{F}_n$ at $t_i$ and hence (by right continuity of the step function) on $[t_i,t_{i+1})$, a region to the {\it right} of $t_i$.
}
\end{remark}

The characterization via Lemma \ref{lem:charMLECS} is well-known and a proof can e.g.\ be found in \cite{rwd:88} and \cite{piet_geurt:14}.

For the confidence intervals based on likelihood ratio tests for the MLE, we also have to compute the MLE under the restriction that its value is equal to a prescribed value $a$ at a point $t_0$. There are different ways to do this. It is suggested in \cite{mouli_jon:01} to compute the restricted MLE in two steps. The restricted MLE $\hat F_n^{(0)}$ is computed for values at points $t$ to the left of $t_0$ under the restriction that $\hat F_n^{(0)}(t)\le a$ and for values at points $t$ to the right of $t_0$ under the restriction that $\hat F_n^{(0)}(t)\ge a$. To this end two cusum diagrams of type (\ref{cusum_curstat}) are formed. Let $m$ be such that $t_m\le t_0\le t_{m+1}$. Then, a diagram of type (\ref{cusum_curstat}) is formed, with $n$ replaced by $m$, for the values to the left of $t_0$.
Next the minimum of $a$ and the left derivative of the greatest convex minorant of this diagram of points is taken as the solution to the left of $t_0$. For the points on the right side of $t_0$  the cusum diagram consisting of the points
\begin{equation}
\label{cusum_curstat2}
P_0=(0,0) \mbox{ and }P_i=\left(i,\sum_{j=1}^i\left(1-\delta_{n-j+1}\right)\right),\,\,\,1\le i\le n-m,
\end{equation}
is considered and the maximum of $a$ and $1$ minus the left derivatives of the greatest convex minorant of this diagram of points, with the obvious renumbering, is taken as the solution $\hat F_n^{(0)}(t_i)$ to the right of $t_0$. Note that in this approach there is not necessarily a point $t_i$ where $\hat F_n^{(0)}(t_i)=a$ is actually achieved; we only have inequalities. Of course, in view of the log likelihood, allowing an extra jump of the distribution function at $t_0$, the value of $\hat F_n^{(0)}(t_0)$ can be taken equal to $a$ if this is required.

In view of our general approach, where we also will prove the result for monotone densities, we will follow a different path, where we make the connection with the penalization methods, studied in, e.g., \cite{grojosmooth:13} and earlier in \cite{WoodSun:93}.
We have the following result.

\begin{lemma}
\label{lemma:one_lambda_curstat}
Let $0<a<1$ and $1<i_0<n$ be such that $\delta_i=1$ for some $i\le i_0$ and $\delta_i=0$ for some $i>i_0$.
Moreover, let $t_0\in(t_{i_0},t_{i_0+1})$. Denote by $(\hat F_1,\dots,\hat F(t_0),\dots,\hat F_n)$ the vector of values of a piecewise constant nondecreasing function $\hat F$ at the observation points and at the point $t_0$, where $\hat F_{i_0}\le \hat F(t_0)\le \hat F_{i_0+1}$.  Then
\begin{enumerate}
\item[(i)]
If $\hat F_i$ is given by the left-hand slope of the greatest convex minorant of the cusum diagram with points $(0,0)$ and
\begin{equation}
\label{fenchel_null}
\left(i,\sum_{j=1}^i\d_j\right),\qquad i=1,\dots,n,
\end{equation}
and if $\hat F_{i_0}\le a\le \hat F_{i_0+1}$, we put $\hat F(t_0)=a$, and $\hat F^{(0)}=(\hat F_1,\dots,\hat F(t_0),\dots,\hat F_n)$ is the maximizer of $\sum_{i=1}^n\{\d_i\log F_i+(1-\d_i)\log(1-F_i)\}$, under the side condition $F(t_0)=a$.
\item[(ii)]
If $(\hat F_1,\dots,\hat F_n)$ is defined as in (i), but $\hat F_{i_0}>a$ or $\hat F_{i_0+1}<a$, we define $\hat\m\in\R$ to be the solution (in $\m$) of the equation
\begin{equation}
\label{equation_mu}
\max_{k\le i_0}\min_{i\ge i_0}\frac{\sum_{j=k}^i\d_j+n\m\,a(1-a)}{i-k+1}=a.
\end{equation}
and define $\hat F_i^{(0)}$ by the left-hand slope of the greatest convex minorant of the cusum diagram with points $(0,0)$ and
\begin{equation}
\label{fenchel_lambdas2}
\left(i,\sum_{j=1}^i\left\{\d_j+n\hat\m\,a(1-a)1_{\{j=i_0\}}\right\}\right),
\qquad i=1,\dots,n.
\end{equation}
We put $\hat F^{(0)}(t_0)=a$. Then $\hat F^{(0)}=(\hat F_1^{(0)},\dots,\hat F^{(0)}(t_0),\dots,\hat F^{(0)}_n)$ is the maximizer of\\ $\sum_{i=1}^n\{\d_i\log F_i+(1-\d_i)\log(1-F_i)\}$, under the side condition $F(t_0)=a$.
\end{enumerate}
\end{lemma}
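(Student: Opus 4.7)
The approach is Lagrange duality for the strictly concave log-likelihood. Because $\ell$ depends only on the values $F(t_i)$ at the observation points and the right-continuous piecewise constant maximizer $\hat F^{(0)}$ places no superfluous atom at $t_0$, the side condition $F(t_0)=a$ is equivalent to the scalar equality $F_{i_0}=a$.

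Part (i) is immediate: the hypothesis $\hat F_{i_0}\le a\le\hat F_{i_0+1}$ makes the unrestricted MLE feasible once one inserts the value $a$ at $t_0$, and the uniqueness asserted in Lemma~\ref{lem:charMLECS} transfers to the restricted problem.

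For part (ii) the plan is to form the Lagrangian $L(F,\nu)=\ell(F)-\nu(F_{i_0}-a)$ and to maximize over the monotonicity cone only. On any pool $I=\{k,\dots,i\}$ on which the solution is constant at value $v$, the stationarity condition summed over $j\in I$ rewrites, at $v=a$, as a single linear identity stating that the pool average of the modified response $\delta_j+n\mu\,a(1-a)\,\mathbf 1_{\{j=i_0\}}$ equals $a$, once one sets $\nu=-n\mu$ (so the Fisher information factor $a(1-a)$ is absorbed into the reparametrization). Pools not containing $i_0$ retain the standard average $\bar\delta_I$, so the monotonized solution is exactly the vector of left-hand slopes of the GCM of the cusum diagram \eqref{fenchel_lambdas2}; the condition ``slope at $i_0$ equals $a$'' is then precisely the min-max characterization \eqref{equation_mu}. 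Continuity and strict monotonicity in $\mu$ of the left-hand side of \eqref{equation_mu}, together with its value $\hat F_{i_0}\ne a$ at $\mu=0$, give the unique existence of $\hat\mu$, whose sign tracks which of the two sub-cases holds. Strong duality for a concave program with a single linear equality constraint then identifies the Lagrangian maximizer---which is primal-feasible because $\hat F^{(0)}_{i_0}=a$ and monotonicity of the GCM slope yields $\hat F^{(0)}_{i_0+1}\ge a$---with the restricted MLE.

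The main obstacle is the reconciliation between the genuinely \emph{nonlinear} pool-stationarity produced by the binomial log-likelihood and the \emph{linear} pool-average formula coming from the GCM of \eqref{fenchel_lambdas2}: these two characterizations agree precisely at the point $v=a$ that \eqref{equation_mu} enforces, so one must verify that the pool structure determined by the modified GCM is consistent with the pool structure of the Lagrangian maximizer. The extra assumption that $\delta_i=1$ for some $i\le i_0$ and $\delta_i=0$ for some $i>i_0$ is what rules out the boundary degeneracies that would otherwise leave $\hat\mu$ ill-defined or drive $\hat F^{(0)}$ outside $(0,1)$.
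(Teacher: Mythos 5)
Your proposal is correct and follows essentially the same route as the paper: a Lagrange term $n\mu(F_{i_0}-a)$, the Fenchel conditions over the monotone cone, absorption of the Fisher-information factor $a(1-a)$ at the constrained value into a modified cusum diagram, and the maxmin equation for $\hat\mu$. The ``obstacle'' you flag is resolved exactly as for the unrestricted MLE: the weights $1/(F_j(1-F_j))$ are constant on each block of constancy, so the weighted cumulative Fenchel inequalities are equivalent to the unweighted linear ones defining the greatest convex minorant of \eqref{fenchel_lambdas2}.
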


\begin{remark}
{\rm The condition $\d_i=0$ for some $i>i_0$ is to avoid trivialities for the case that $\d_i=1$ for all $i\ge i_0$, in which case the only reasonable value of $F_i$ is 1 for $i\ge i_0$. A similar remark holds for the condition that $\d_i=1$ for some $i\le i_0$. If this were not the case, we would put $F_i$ equal to $0$ for $i\le i_0$. For the asymptotic confidence intervals we concentrate on {\it interior} points of the support of the distribution $F_0$.
}
\end{remark}

\begin{figure}[!ht]
\begin{subfigure}[b]{0.45\textwidth}
\includegraphics[width=\textwidth]{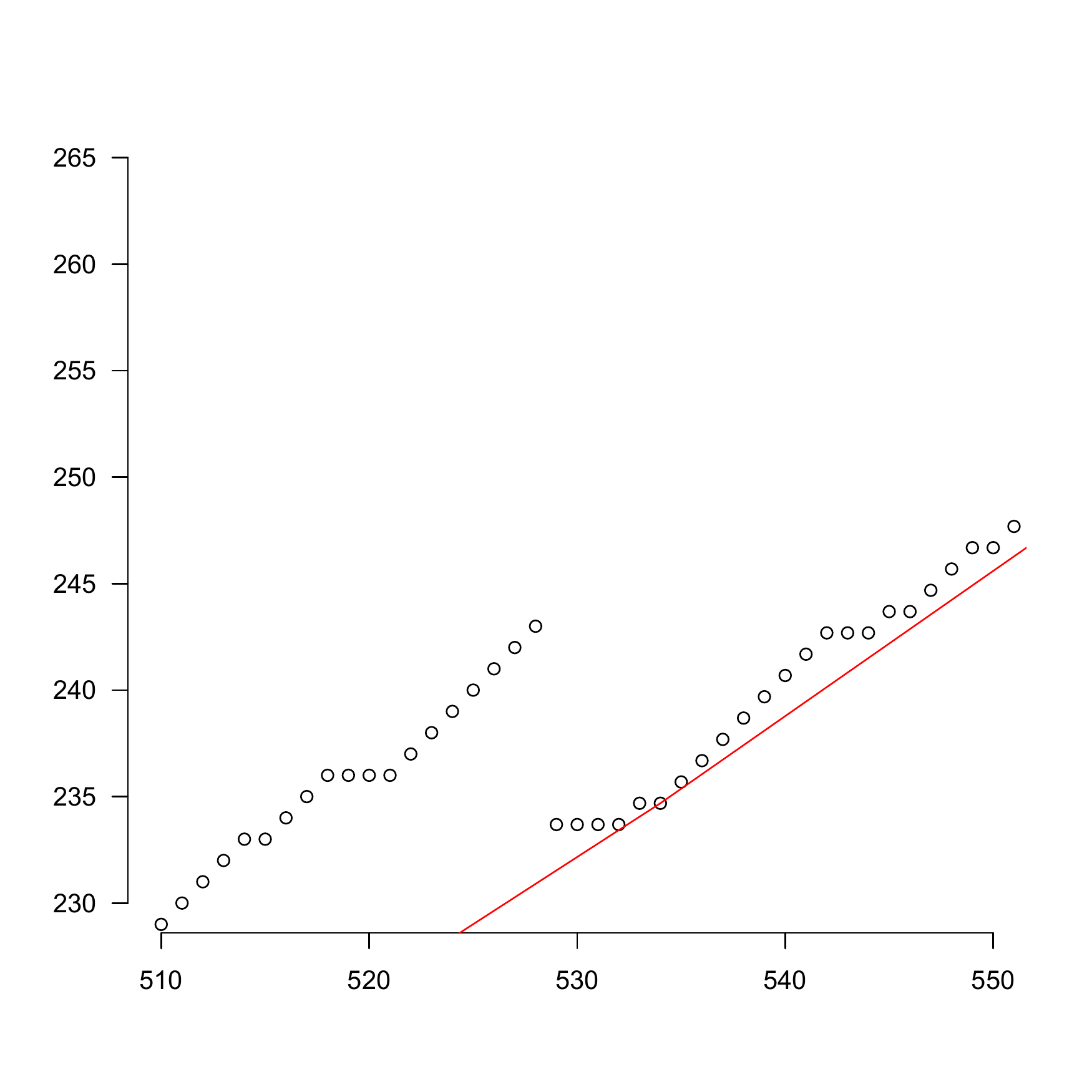}
\caption{}
\label{fig:curstat_cusum1}
\end{subfigure}
\begin{subfigure}[b]{0.45\textwidth}
\includegraphics[width=\textwidth]{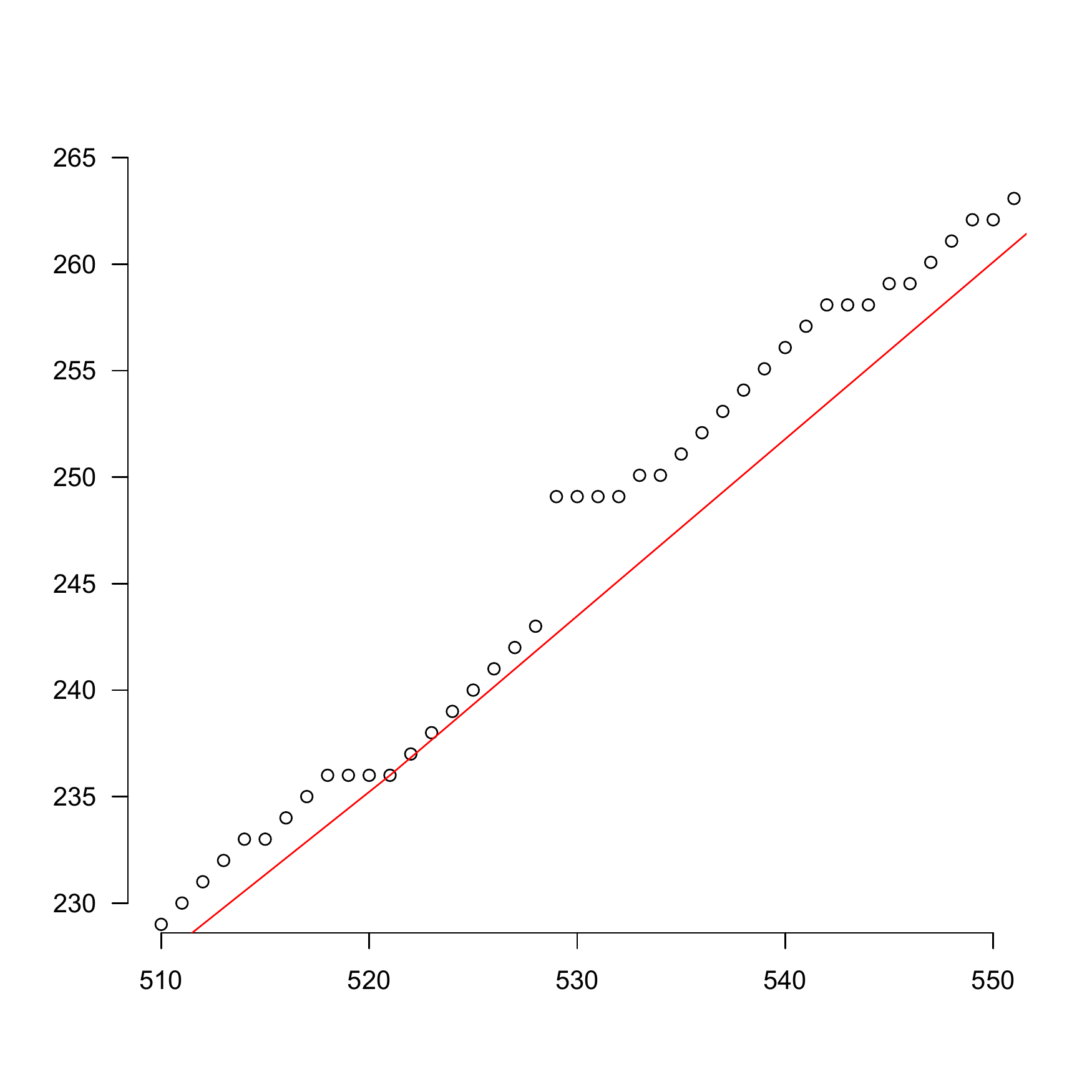}
\caption{}
\label{fig:curstat_cusum2}
\end{subfigure}
\caption{Pieces of two ``Lagrange modified'' cusum diagrams for the current status model, for sample size $1000$ from the truncated exponential distribution function $F_0$ on $[0,2]$; the observation distribution is uniform on $[0,2]$, and $t_0=1$, $F_0(t_0)=0.731058$. The unrestricted MLE $\hat F$ has value $0.722892$ at $t_0$. In the example we have: $t_{529}<t_0=1<t_{530}$.\\
Here (a) gives the local cusum diagram for $\hat F^{(0)}(t_0)=a=F_0(t_0)-0.1=0.631058$, $\hat\m=-0.039998$, and (b) gives the cusum diagram for $\hat F^{(0)}(t_0)=a=F_0(t_0)+0.1=0.831058$, $\hat\m=0.043355$. In both cases the big jump is at the point $i_0=529$.}
\label{fig:two_cusums100}
\end{figure}

\begin{proof}
(i) If  $\hat F_i$ is given by the left-hand slope of the greatest convex minorant of the cusum diagram (\ref{fenchel_null}), then $\hat F=(\hat F_1,\dots,\hat F_n)$ maximizes $\sum_{i=1}^n\{\d_i\log F_i+(1-\d_i)\log(1-F_i)\}$ without the side condition $\hat F(t_{i_0})\le a\le \hat F(t_{i_0+1})$. Since the side condition is also satisfied under (i), $\hat F$ is also the maximizer under this side condition in this case.\\
(ii) We can reduce the proof to the situation where $\d_n=0$. For if $\d_j=1$ for $j\ge i$, we put $F_j=1$ for $j\ge i$. For similar reasons we can assume $\d_1=1$. A similar reduction of the maximization problem was used in Proposition 1.3, p.\ 46 of \cite{GrWe:92}. An advantage of this reduction is that maximizing $\ell$ over  all vectors $F=(F_1,\ldots,F_n)$ with $0\le F_1\le \dots\le F_n\le1$ is equivalent to maximizing $\ell$ over the cone $C=\{F=(F_1,\dots,F_n):0\le F_1\le\dots\le F_n\}$.

Now first suppose $\hat F(t_{i_0})>a$ for the unrestricted solution in (i). Then we have to make $\hat F(t_{i_0})$ smaller to allow $\hat F(t_0)=a$. We do this by changing the object function to be maximized over $C$ into:
\begin{align}
\label{criterion_curstat}
\f_{\m}(F_1,\dots,F_n)&=\ell(F)+n\m\left(F_{i_0}-a\right)=\sum_{i=1}^n \left\{\d_i\log F_i+
\left(1-\d_i\right)\log\left(1-F_i\right)\right\}+n\m\left(F_{i_0}-a\right),
\end{align}
where $\m<0$ is a suitable Lagrange multiplier.

The elements of the cone $C$ can be uniquely expressed as positive linear combinations of its so-called generators
$$
g_1=(0,0,\dots,0,0,1),\,g_2=(0,0,\dots,0,1,1),\dots, g_n=(1,1,\dots,1,1,1).
$$
The necessary and sufficient Fenchel conditions for maximizing a concave function over a convex cone, (7.35) of \cite{piet_geurt:14}, applied to these generators, lead to the following inequalities:
\begin{align}
\label{fenchel1}
\left\langle\nabla\f_{\m}(F),g_j\right\rangle&=\sum_{j=i}^n \left\{\frac{\d_j-F_j}{F_j\{1-F_j\}}+n\m1_{\{j=i_0\}}\right\}\le0,\qquad i=1,\dots,n,
\end{align}
where $\nabla\f_{\m}(F)$ is the nabla vector $(\frac{\partial}{\partial F_1}\f_{\m},\dots,\frac{\partial}{\partial F_n}\f_{\m})$ at $F$ and $\m$ of the function (\ref{criterion_curstat}).
These inequalities can be rewritten as
\begin{align*}
\sum_{j=i}^n \left\{\frac{\d_j-F_j+n\m1_{\{j=i_0\}}a(1-a)}{F_j\{1-F_j\}}\right\}
\le0,\qquad j=1,\dots,n.
\end{align*}
We also have the equality part of the Fenchel conditions,
\begin{align}
\label{fenchel2}
\left\langle\nabla\f_{\m}(F),F\right\rangle=\sum_{j=1}^n\frac{\delta_j-F_j}{1-F_j}+n\m a=0.
\end{align}
Multiplying this relation on blocks of constancy of $F$ by $1-F_j$ (see the proof of Lemma 2.3 in \cite{piet_geurt:14}), we find:
\begin{align}
\label{fenchel2a}
\sum_{j=1}^n\bigl(\delta_j-F_j\bigr)+n\m a(1-a)=0.
\end{align}
The Fenchel conditions (\ref{fenchel1}) and (\ref{fenchel2}) or (\ref{fenchel2a}) are necessary and sufficient conditions for the MLE, restricted to be equal to $a$ at $t_{i_0}$.

It now follows that $\hat F^{(0)}$ is given by the left derivatives of the greatest convex minorant of the cusum diagram (\ref{fenchel_lambdas2}), where $\hat\m$ is the solution of the equation (\ref{equation_mu}). For the left derivative of the greatest convex minorant of the cusum diagram at $i_0$ is given by the left side of (\ref{equation_mu}), by a well-known maxmin characterization, see, e.g., Theorem 1.4.4 in \cite{rwd:88},  and if (\ref{equation_mu}) holds, we also have (\ref{fenchel2a}), since the greatest convex minorant will be equal to the second coordinate $\sum_{j=1}^n\{\d_j+\hat\m\,a(1-a)1_{\{j=i_0\}}\}$ of the cusum diagram at  $n$. Since $\hat F_{i_0}^{(0)}=a$, we can also let  $\hat F^{0}(t_0)=a$.

If $\hat F(t_{i_0+1})<a$, for $\hat F$ as in (i), we also have $\hat F(t_{i_0})<a$, and we reason in a similar way, this time for a Lagrange multiplier $\hat\m>0$. This will again give $\hat F^{(0)}_{i_0}=a$, and we can define $\hat F^{(0)}(t_0)=a$ again.
\end{proof}

\begin{remark}
\label{remark:error_jon_mouli}
{\rm
Cusum diagrams, incorporating the penalty, are shown in Figure \ref{fig:two_cusums100}. We have $\hat\m>0$ if $\hat F_{i_0+1}<a$ for the unrestricted solution of the maximization problem, and the cusum diagram for the restricted maximization problem is moved upward at $i_0$. If $\hat F_{i_0}>a$, it is the other way around. The penalties give a local deviation of the restricted MLE $\hat F^{(0)}$ from the unrestricted MLE, but outside a local neighborhood of the point of restriction, $\hat F^{(0)}$ and $\hat F$ will coincide again, where $\hat F^{(0)}$ picks up the same points of jump as $\hat F$.

Note, however, that we cannot say $\hat F^{(0)}(t)=a$ for the values $t$ where $\hat F^{(0)}(t)\ne \hat F(t)$. A typical picture is shown in Figure \ref{fig:restr_unrestrMLE}, where, on the region where $\hat F^{(0)}$ and $\hat F$ are different, the points of jump of $\hat F^{(0)}$ and $\hat F$ are at different locations. There is also not a ``contained in" relation in either direction for the sets of points of jump.
}
\end{remark}

\begin{figure}[!ht]
\begin{center}
\includegraphics[scale=0.5]{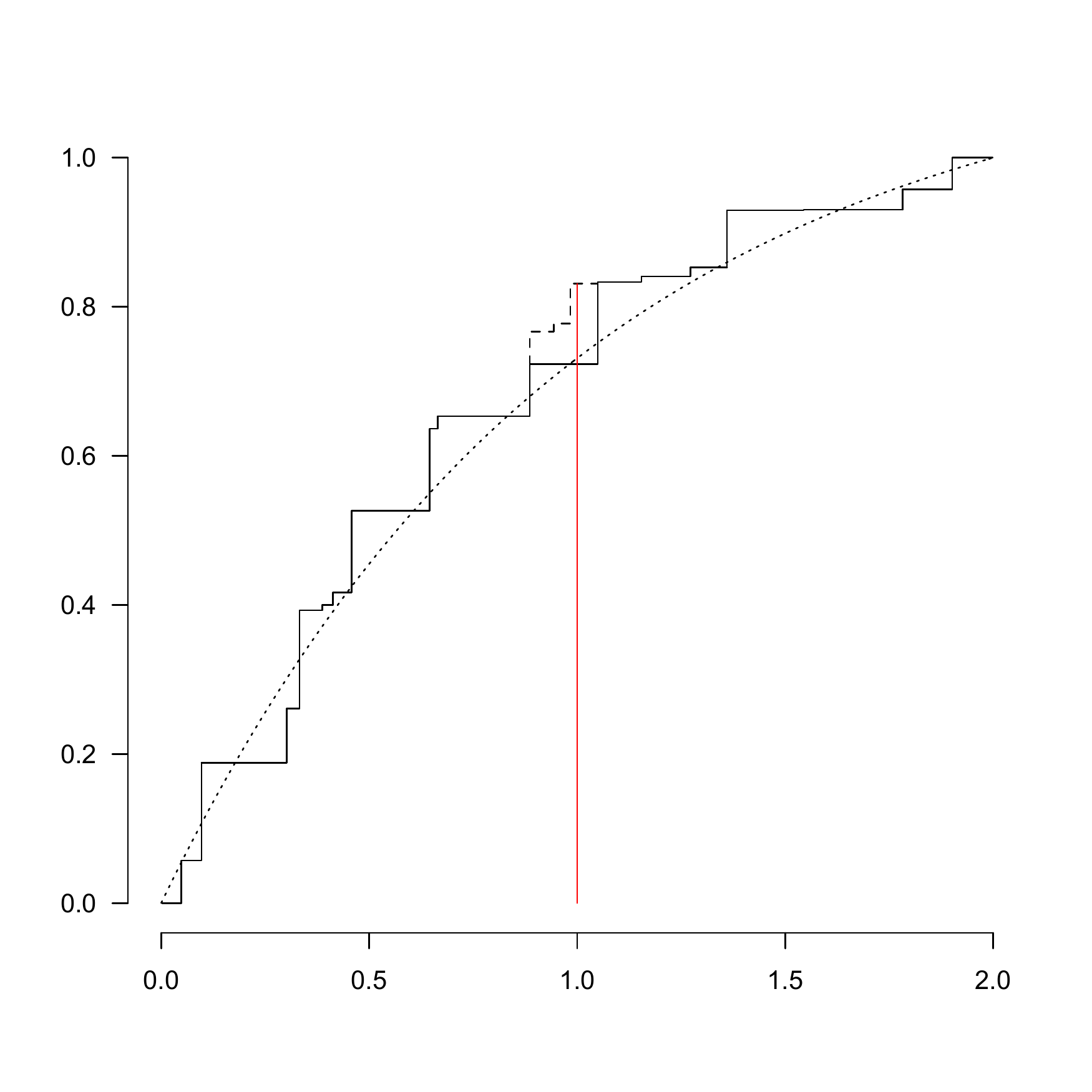}
\end{center}
\caption{The unrestricted MLE and restricted MLE for the same data as in Figure \ref{fig:two_cusums100}, where $F_0$ (dotted) is the truncated exponential on $[0,2]$ and the observation distribution is uniform on $[0,2]$. Moreover, $\hat F^{(0)}(1)=F_0(1)+0.1$. The deviation of the restricted MLE $\hat F^{(0)}$ from the unrestricted MLE $\hat F$ is dashed. The jumps of the restricted and unrestricted MLE do not coincide on the interval of deviation. The value of $\hat F_n^{(0)}$ at $t_0=1$ equals $0.831058$; the vertical bar connects the points $(1,0)$ and $(1,F_0(1)+0.1)$.}
\label{fig:restr_unrestrMLE}
\end{figure}

The proof of Theorem \ref{th:LR_current_status} below will use the following lemma, which is of a similar nature as results in \cite{grojosmooth:13}. To indicate the dependence on the sample size $n$, we now will denote the unrestricted and restricted MLE by $\hat F_n$ and $\hat F_n^{(0)}$, respectively.

\begin{lemma}
\label{lemma:order_mu_CS}
Under the conditions of Theorem \ref{th:LR_current_status} we have, if $a=F_0(t_0)$, as $n\to\infty$:
$$
\hat\m_n=O_p\left(n^{-2/3}\right).
$$
\end{lemma}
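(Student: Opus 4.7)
The plan is to exploit the block structure of $\hat F_n^{(0)}$ together with the maxmin characterization (\ref{equation_mu}). Let $[k^*,i^*]$ denote the block of constancy of $\hat F_n^{(0)}$ containing $i_0$, so that $\hat F_n^{(0)}\equiv a$ on the index range $[k^*,i^*]$ and $(k^*,i^*)$ attains the maxmin in (\ref{equation_mu}) with common value $a$. Rearranging (\ref{equation_mu}) at this block gives
\begin{equation}
\label{eq:localid}
n\hat\m_n\,a(1-a)\;=\;\sum_{j=k^*}^{i^*}\bigl(a-\d_j\bigr).
\end{equation}
Since $|a-\d_j|\le 1$, the right-hand side is bounded in absolute value by the block length $\ell_n:=i^*-k^*+1$, and hence the lemma follows at once from the bound $\ell_n=O_p(n^{1/3})$.

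To establish $\ell_n=O_p(n^{1/3})$, I would invoke the classical block-length estimate for Grenander-type estimators at an interior point. Because $a=F_0(t_0)$ is the true value, and the hypotheses of Theorem~\ref{th:LR_current_status} supply $f_0(t_0)>0$ together with sufficient smoothness of $F_0$ at $t_0$, the standard argument applies: on a window of length $\ell$ around $i_0$, the unpenalized cusum process has deterministic drift deviating from a line of slope $a$ by $O(\ell^2/n)$ (from a Taylor expansion of $F_0$ around $t_0$) and random fluctuation of order $\sqrt{\ell}$; these two scales balance at $\ell\asymp n^{1/3}$, so that a block at level $a$ of length $\ell\gg n^{1/3}$ would force some sub-interval disjoint from $i_0$ to carry a cusum slope differing from $a$ by more than (\ref{equation_mu}) permits.

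The main obstacle is that the cusum diagram (\ref{fenchel_lambdas2}) is not the unpenalized one: it carries an added jump of unknown size $n\hat\m_n a(1-a)$ at the single index $i_0$, so one cannot quote the unrestricted Grenander block-length bound verbatim. This is handled by exploiting the locality of the penalty: on any sub-block of $[k^*,i^*]$ lying strictly to one side of $i_0$, the slope of the penalized cusum coincides exactly with that of the unpenalized one, and the bias-variance balance sketched above applies unchanged to such sub-blocks. Combined with (\ref{eq:localid}) and $|a-\d_j|\le 1$, this yields the stated $\hat\m_n=O_p(n^{-2/3})$; one could in fact sharpen the rate by decomposing the sum in (\ref{eq:localid}) into its deterministic part $\sum_{j=k^*}^{i^*}(F_0(t_0)-F_0(t_j))=O_p(\ell_n\cdot n^{-2/3})$ and a centred stochastic part controlled by a maximal inequality over windows of length $\lesssim n^{1/3}$, but this refinement is not needed here.
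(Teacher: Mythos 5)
Your overall strategy --- the exact identity $n\hat\m_n a(1-a)=\sum_{j=k^*}^{i^*}(a-\d_j)$ over the block of constancy of $\hat F_n^{(0)}$ containing $i_0$, followed by a localization of that block --- is sound and close in spirit to the paper's argument (which instead dominates $\hat\m_n$ by an explicit $\m$ built from the \emph{unrestricted} MLE's touch points and the monotonicity of $\f$, so that the known $O_p(n^{-1/3})$ localization of $\hat F_n$'s jumps can be quoted for the left endpoint). But there is a genuine quantitative gap: you have confused the index scale of the cusum diagram with the time scale. Your own bias--variance balance, drift $\ell^2/n$ against fluctuation $\sqrt{\ell}$, balances at $\ell^{3/2}\asymp n$, i.e.\ $\ell\asymp n^{2/3}$, not $n^{1/3}$. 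Indeed, since $g(t_0)>0$, a time-interval of length $O_p(n^{-1/3})$ around $t_0$ (the correct cube-root localization) contains $O_p(n^{2/3})$ observation indices, so the correct statement is $\ell_n=O_p(n^{2/3})$.

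With $\ell_n=O_p(n^{2/3})$, the crude bound $|a-\d_j|\le 1$ only yields $n\hat\m_n a(1-a)=O_p(n^{2/3})$, i.e.\ $\hat\m_n=O_p(n^{-1/3})$, which is not the claimed rate. The decomposition you dismiss as an unneeded refinement is in fact the essential step: one must split $\sum_{j=k^*}^{i^*}(a-\d_j)$ into the drift part $\sum_j\{F_0(t_0)-F_0(T_j)\}$, which is $O_p(\ell_n^2/n)=O_p(n^{1/3})$ because $|F_0(t_0)-F_0(T_j)|=O_p(n^{-1/3})$ on the block, and the centred part $\sum_j\{F_0(T_j)-\d_j\}$, which is $O_p(\sqrt{\ell_n})=O_p(n^{1/3})$ by square-root cancellation; only then does the identity give $\hat\m_n=O_p(n^{-2/3})$. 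This is precisely the three-term decomposition (\ref{random_versus_parabolic}) in the paper's proof. (Your order assessments $O_p(\ell_n n^{-2/3})$ for the drift and ``windows of length $\lesssim n^{1/3}$'' for the maximal inequality carry the same scaling error.) A secondary point: the localization of the block of the \emph{penalized} diagram does need the one-sided sub-block observation you make --- that sub-blocks avoiding $i_0$ see the unpenalized cusum, with averages $\ge a$ to the left of $i_0$ and $\le a$ to the right --- and that part of your sketch is correct and would need to be written out; it plays the role of the paper's argument that $T_{i_\m}-t_0=O_p(n^{-1/3})$.
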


\begin{proof}
Consider the function
$$
\f(\m)=\max_{k\le i_0}\min_{i\ge i_0}\frac{\sum_{j=k}^i\d_j+n\m\,a(1-a)}{i-k+1},\qquad a=F_0(t_0).
$$
By the conditions of Theorem \ref{th:LR_current_status} we may assume that the observation times have two successive order statistics $T_{i_0}$ and $T_{i_0+1}$, as in Lemma \ref{lemma:one_lambda_curstat}, such that $t_0\in(T_{i_0},T_{i_0+1})$.
By the maxmin characterization of the unrestricted MLE $\hat F_n$, we have
$$
\f(0)=\max_{k\le i_0}\min_{i\ge i_0}\frac{\sum_{j=k}^i\d_j}{i-k+1}=\hat F_n(T_{i_0}).
$$
Let $k_1\le i_0$ and $i_1\ge i_0$ be the indices, satisfying
$$
\hat F_n(T_{i_0})=\frac{\sum_{j=k_1}^{i_1}\d_j}{i_1-k_1+1}=\max_{k\le i_0}\min_{i\ge i_0}\frac{\sum_{j=k}^i\d_j}{i-k+1}\,.
$$
Suppose $a>\hat F_n(T_{i_0})$ and let, for $\m>0$, $i_{\m}\ge i_0$ be the index such that
$$
\frac{\sum_{j=k_1}^{i_{\m}}\d_j+n\m a(1-a)}{i_{\m}-k_1+1}=\min_{i\ge i_0}\frac{\sum_{j=k_1}^{i}\d_j+n\m a(1-a)}{i-k_1+1}\,.
$$
Then, since the function
$$
\m\mapsto \min_{i\ge i_0}\frac{\sum_{j=k_1}^{i}\d_j+n\m a(1-a)}{i-k_1+1}
$$
is continuous and increasing in $\m$ and tends to $\infty$, as $\m\to\infty$, there exists a $\m>0$ such that
$$
\frac{\sum_{j=k_1}^{i_{\m}}\d_j+n\m a(1-a)}{i_{\m}-k_1+1}=\min_{i\ge i_0}\frac{\sum_{j=k_1}^{i}\d_j+n\m a(1-a)}{i-k_1+1}=a.
$$

Using $a=F_0(t_0)$ and denoting the empirical measure of $\{(T_j,\Delta_j)\,:\,1\le j\le n\}$ by $\P_n$, this means that
\begin{equation}
\label{mu_relation}
\m F_0(t_0)(1-F_0(t_0))=\int_{t\in[\t_-,T_{i_{\m}}]}\bigl\{F_0(t_0)-\d\bigr\}\,d\P_n(t,\d).
\end{equation}
where $\t_-=T_{k_1}$ is the last jump point of $\hat F_n$ before $t_{i_0}$.
By a well-known fact on the jump points of the MLE in the current status model (see, e.g., Lemma 5.4 and its proof on p. 95 of \cite{GrWe:92}), we have that $t_0-\t_-=O_p(n^{-1/3})$.
By the same type of argument, we can choose for each $\e>0$ an $M>0$ such that
$$
\P\left\{\int_{u\in[\t_-,t]}\bigl\{F_0(t_0)-\d\bigr\}\,d\P_n(u,\d)<0\right\}>1-\e,
$$
if $t>t_0+Mn^{-1/3}$. Denote the distribution function of the observation times by $G$, with corresponding empirical distribution function $\G_n$. Then, since we must have
\begin{align}
\label{random_versus_parabolic}
&0<\int_{t\in[\t_-,T_{i_\m}]}\bigl\{F_0(t_0)-\d\bigr\}\,d\P_n(t,\d)\nonumber\\
&=\int_{t\in[\t_-,T_{i_\m}]}\bigl\{F_0(t_0)-F_0(t)\bigr\}\,d\G_n(t)+\int_{t\in[\t_-,T_{i_\m}]}\bigl\{F_0(t)-\d\bigr\}\,d\P_n(t,\d)\nonumber\\
&=\int_{t\in[\t_-,T_{i_\m}]}\bigl\{F_0(t_0)-F_0(t)\bigr\}\,dG(t)
+\int_{t\in[\t_-,T_{i_\m}]}\bigl\{F_0(t_0)-F_0(t)\bigr\}\,d(\G_n-G)(t)\nonumber\\
&\qquad\qquad\qquad\qquad\qquad\qquad\qquad\qquad+\int_{t\in[\t_-,T_{i_\m}]}\bigl\{F_0(t)-\d\bigr\}\,d\P_n(t,\d)
\end{align}
by the positivity of $\m$, relation (\ref{mu_relation}) and the conditions of Theorem \ref{th:LR_current_status}, it now follows that $T_{i_{\m}}-t_0=O_p(n^{-1/3})$, and therefore
$$
\m F_0(t_0)(1-F_0(t_0))=\int_{t\in[\t_-,T_{i_{\m}}]}\bigl\{F_0(t_0)-\d\bigr\}\,d\P_n(t,\d)=O_p\left(n^{-2/3}\right),
$$
since $t_0-\t_-=O_p(n^{-1/3})$, $T_{i_{\m}}-t_0=O_p(n^{-1/3})$, and therefore all three expressions on the right-hand side of (\ref{random_versus_parabolic}) are $O_p(n^{-2/3})$.

Hence $\m=O_p\left(n^{-2/3}\right)$ and
$$
\f(\m)=\max_{k\le i_0}\min_{i\ge i_0}\frac{\sum_{j=k}^i\d_j+n\m\,a(1-a)}{i-k+1}
\ge\min_{i\ge i_0}\frac{\sum_{j=k_1}^i\d_j+n\m\,a(1-a)}{i-k_1+1}=a.
$$
By the monotonicity and continuity of the function $\f$ we can now conclude
$$
0\le\hat\m_n\le\m=O_p\left(n^{-2/3}\right).
$$
The case $a<\hat F_n(t_0)$ can be treated in a similar way.
\end{proof}

\begin{remark}
\label{remark:kimpol_arguments}
{\rm The crux of the matter in proving a result like $T_{i_{\m}}-t_0=O_p(n^{-1/3})$ in the proof of Lemma 
\ref{lemma:order_mu_CS} (see the discussion below (\ref{random_versus_parabolic})), is that, outside a neighborhood of order $n^{-1/3}$, the last two terms of the three terms on the right-hand side of (\ref{random_versus_parabolic}) cannot cope with the negative parabolic drift of the first term. Arguments of this type are familiar by now, and were for example also used in the proofs of Lemma 3.5 in \cite{piet_geurt:14} and Lemma 5.4  on p. 95 of \cite{GrWe:92}. Arguments of this type can also be found in \cite{kimpol:90}.
}
\end{remark}

The preceding lemmas enable us to prove the following result, which corresponds to Theorem 2.5 in \cite{mouli_jon:01}.
The proof is given in Section \ref{section:appendix1}.

\begin{theorem}
\label{th:LR_current_status}
Let $F_0$ and $G$ be distribution functions with continuous densities $f_0$ and $g$ in a neighborhood of the point $t_0$ such that $0<F_0(t_0)<1$ and $f_0(t_0)$ and $g(t_0)$ are strictly positive. Let $\hat F_n$ be the unrestricted MLE and let $\hat F_n^{(0)}$ be the MLE under the restriction that $\hat F_n^{(0)}(t_0)=F_0(t_0)$. Moreover, let the log likelihood ratio statistic $2\log{\ell_n}$ be defined by
$$
2\log{\ell_n}=2\sum_{i=1}^n\left\{\dd_i\log\frac{\hat F_n(T_i)}{\hat F_n^{(0)}(T_i)}+(1-\dd_i)\log\frac{1-\hat F_n(T_i)}{1-\hat F_n^{(0)}(T_i)}\right\}\,.
$$
Then
$$
2\log{\ell_n}\stackrel{{\cal D}}\longrightarrow \mathbb D,
$$
where $\mathbb D$ is the universal limit distribution as given in \cite{mouli_jon:01}.
\end{theorem}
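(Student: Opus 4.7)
The plan is to Taylor-expand $2\log\ell_n$ in $g_n(T_i):=\hat F_n(T_i)-\hat F_n^{(0)}(T_i)$, use the Fenchel characterizations of Lemmas \ref{lem:charMLECS} and \ref{lemma:one_lambda_curstat} to reduce the linear terms to a boundary contribution at $t_0$, localize about $t_0$ using $\hat\m_n=O_p(n^{-2/3})$ from Lemma \ref{lemma:order_mu_CS}, and rescale at rate $n^{1/3}$ to identify the limit as a functional of two-sided Brownian motion with parabolic drift.

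A second-order expansion of $\d_i\log(\hat F_n/\hat F_n^{(0)})+(1-\d_i)\log((1-\hat F_n)/(1-\hat F_n^{(0)}))$ about $\hat F_n^{(0)}(T_i)$ gives $2\log\ell_n=L_n-Q_n+R_n$, with
$$
L_n=2\sum_{i=1}^n g_n(T_i)\,\frac{\d_i-\hat F_n^{(0)}(T_i)}{\hat F_n^{(0)}(T_i)(1-\hat F_n^{(0)}(T_i))},
$$
$$
Q_n=\sum_{i=1}^n g_n(T_i)^2\left[\frac{\d_i}{\hat F_n^{(0)}(T_i)^2}+\frac{1-\d_i}{(1-\hat F_n^{(0)}(T_i))^2}\right],
$$
and a cubic remainder $R_n$. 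Since the Lagrange-modified cusum diagram (\ref{fenchel_lambdas2}) differs from (\ref{fenchel_null}) only at index $i_0$, and only by $n\hat\m_n a(1-a)=O_p(n^{1/3})$, a parabolic-drift-versus-fluctuation argument of the type used at (\ref{random_versus_parabolic}) (cf.\ Remark \ref{remark:kimpol_arguments}) shows that the zone where $\hat F_n\ne\hat F_n^{(0)}$ is contained in an $O_p(n^{-1/3})$-neighborhood of $t_0$. Inside this window the $n^{1/3}$-consistency of both MLEs gives $|g_n|=O_p(n^{-1/3})$ uniformly, while there are $O_p(n^{2/3})$ sample points, so $R_n=O_p(n^{2/3}\cdot n^{-1})=O_p(n^{-1/3})=o_p(1)$.

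The linear term $L_n$ is processed by Abel summation and the Fenchel relations. Lemma \ref{lemma:one_lambda_curstat} tells us that the cumulative score $W_i=\sum_{j\ge i}(\d_j-\hat F_n^{(0)}(T_j))/[\hat F_n^{(0)}(T_j)(1-\hat F_n^{(0)}(T_j))]$ satisfies $W_i=-n\hat\m_n\,1_{\{i\le i_0\}}$ at jump points of $\hat F_n^{(0)}$, whereas Lemma \ref{lem:charMLECS} gives the unrestricted analog $U_i=0$ at jump points of $\hat F_n$. Splitting the differences of $g_n$ into their $d\hat F_n$ and $d\hat F_n^{(0)}$ pieces and carefully matching terms on the mismatched jump points (Remark \ref{remark:error_jon_mouli}) then yields $L_n=-2n\hat\m_n\bigl(\hat F_n(t_0)-F_0(t_0)\bigr)+o_p(1)$, which is $O_p(1)$ since $n\hat\m_n=O_p(n^{1/3})$ and $\hat F_n(t_0)-F_0(t_0)=O_p(n^{-1/3})$. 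Finally, the rescaling $s=n^{1/3}(t-t_0)$, together with the switching relation applied to the Lagrange-modified cusum diagram and standard Kim--Pollard/Groeneboom arguments, produces joint convergence of the locally standardized MLEs to the slopes of the unconstrained and the constrained-at-zero greatest convex minorants of the same two-sided Brownian motion with parabolic drift; the combined limit of $Q_n-L_n$ is then a functional of these slopes which, by Brownian scale invariance, is independent of $f_0(t_0)$, $g(t_0)$ and $F_0(t_0)$, and hence equals the universal limit $\mathbb{D}$ of \cite{mouli_jon:01}.

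The hard part will be the handling of $L_n$: because $n\hat\m_n$ is only $O_p(n^{1/3})$ rather than $o_p(n^{1/3})$, $L_n$ is not negligible on its own and must be tracked through to the limit. Since the Fenchel equalities for $\hat F_n$ and for $\hat F_n^{(0)}$ hold on \emph{different} blocks of constancy (Remark \ref{remark:error_jon_mouli}), aligning them requires careful local bookkeeping inside the $O_p(n^{-1/3})$-window identified above, and the accompanying joint weak convergence of $(\hat F_n,\hat F_n^{(0)})$ to the unconstrained/constrained slope processes of the \emph{same} limiting Brownian motion (rather than to independent limits), ensured by the structure of the Lagrange-modified cusum diagram, is the ingredient that ultimately produces the universality of $\mathbb{D}$.
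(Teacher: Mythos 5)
Your overall architecture coincides with the paper's: localize the discrepancy between $\hat F_n$ and $\hat F_n^{(0)}$ to an $O_p(n^{-1/3})$-window $D_n$ around $t_0$ using $\hat\mu_n=O_p(n^{-2/3})$ (Lemma \ref{lemma:order_mu_CS}) and the maxmin characterizations, Taylor-expand the log likelihood ratio, dispose of the cubic remainder, rescale at rate $n^{1/3}$, and obtain universality from the joint convergence of the constrained and unconstrained slope processes together with Brownian scaling. Those parts are fine. The genuine gap is the treatment of $L_n$. Writing $s_j=\bigl(\delta_j-\hat F_n^{(0)}(T_j)\bigr)/\bigl[\hat F_n^{(0)}(T_j)(1-\hat F_n^{(0)}(T_j))\bigr]$ and summing by parts against your $W_i=\sum_{j\ge i}s_j$ gives
$\tfrac12 L_n=\sum_{i}W_i\,\Delta g_n(T_i)-n\hat\mu_n\,g_n(T_{i_0})$ plus boundary terms, and the Fenchel equalities (\ref{fenchel1}) pin $W_i$ down \emph{only at the jump points of} $\hat F_n^{(0)}$. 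The piece $\sum_{i\in D_n}W_i\,\Delta\hat F_n^{(0)}(T_i)$ is therefore computable, but the piece $\sum_{i\in D_n}W_i\,\Delta\hat F_n(T_i)$ is not: inside $D_n$ the jump points of $\hat F_n$ and $\hat F_n^{(0)}$ do not coincide and neither jump set contains the other (Remark \ref{remark:error_jon_mouli}), the unrestricted Fenchel conditions involve the weights $1/[\hat F_n(1-\hat F_n)]$ rather than $1/[\hat F_n^{(0)}(1-\hat F_n^{(0)})]$ and so do not apply, and on $D_n$ one has $W_i=O_p(n^{1/3})$ while $\sum_{i\in D_n}\Delta\hat F_n(T_i)=O_p(n^{-1/3})$. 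The term you drop is thus $O_p(1)$ — exactly the order of the quantities being tracked — so the asserted identity $L_n=-2n\hat\mu_n\bigl(\hat F_n(t_0)-F_0(t_0)\bigr)+o_p(1)$ is unjustified and, as far as I can see, false; ``careful matching on the mismatched jump points'' is precisely the step that cannot be carried out with the restricted Fenchel relations alone.

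The paper avoids this trap by expanding each logarithm around the \emph{deterministic} value $F_0(t_0)$ (resp.\ $1-F_0(t_0)$) rather than around the random $\hat F_n^{(0)}(T_i)$; see (\ref{curstat_expansion}). The linear terms then take the form $\int_{D_n}\{\delta-F_0(t_0)\}\{\hat F_n(t)-F_0(t_0)\}\,d\P_n(t,\delta)$ and its analogue for $\hat F_n^{(0)}$, in which the second factor is \emph{constant on each block of constancy} of the relevant estimator. Hence the block-sum identities $\int_{D_n}\{\delta-\hat F_n(t)\}\{\hat F_n(t)-F_0(t_0)\}\,d\P_n=0$ and its restricted counterpart apply exactly, with the Lagrange contribution $n\hat\mu_n a(1-a)$ at $i_0$ annihilated because $\hat F_n^{(0)}(t)-F_0(t_0)$ vanishes on the block containing $t_0$. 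This yields (\ref{result_1st_term}) with no bookkeeping on mismatched jumps, and the dominant term (\ref{curstat_expansion2}) — and with it the explicit limiting functional $\frac{g(t_0)}{F_0(t_0)\{1-F_0(t_0)\}}\int\{X^2-Y^2\}$, which your write-up never identifies — follows directly. If you want to keep your expansion point, you must either control $\sum_{i\in D_n}W_i\,\Delta\hat F_n(T_i)$ explicitly in the $n^{1/3}$-rescaled limit or switch to the paper's centering.
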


\subsection*{Construction of SMLE based confidence intervals for the distribution function}
Let $F_0$ be defined on an interval $[a,b]$ with $a<b$ satisfying $F_0(a)=0$ and $F_0(b)=1$. Then we can estimate $F_0$ by the SMLE, using a boundary correction:
\begin{equation}
\label{SMLE_with_bd_correction}
\tilde F_{nh}(t)=\int\left\{\IK\left(\frac{t-x}{h}\right)+\IK\left(\frac{t+x-2a}{h}\right)
-\IK\left(\frac{2b-t-x}{h}\right)\right\}\,d\hat F_n(x),
\end{equation}
where $\hat F_n$ is the MLE, $\IK(x)=\int_{-\infty}^x K(u)\,du$, and $K$ is a symmetric kernel density, like the triweight kernel. If $t\in[a+h, b-h]$ the SMLE coincides with the familiar
$$
\tilde F_{nh}(t)=\int\IK\left(\frac{t-x}{h}\right)\,d\hat F_n(x),
$$
the other two terms in (\ref{SMLE_with_bd_correction}) are only there for correction at the left and right boundary. For simplicity we take $a=0$ in the following (the usual case), and the interval, containing the support of $F_0$ will now be denoted by $[0,b]$.

For the construction of the $1-\a$ confidence interval we take a number of bootstrap samples $(T_1^*,\dd_1^*),\dots,(T_n^*,\dd_n^*)$ with replacement from $(T_1,\dd_1),\dots,(T_n,\dd_n)$. For each such sample we compute the SMLE $\tilde F_{nh}^*$, using the same bandwidth $h$ as used for the SMLE $\tilde F_{nh}$ in the original sample, and the same type of boundary correction. Next we compute at the points $t$:
\begin{align}
\label{Z_n^*}
&Z_{n,h}^*(t)\nonumber\\
&=\frac{\tilde F_{nh}^*(t)-\tilde F_{nh}(t)}{\sqrt{n^{-2}\sum_{i=1}^n\left\{K_h(t-T_i^*)-K_h(t+T_i^*)-K_h(2b-t-T_i^*)\right\}^2\left(\dd_i-\hat F_n^*(T_i^*)\right)^2}}\,,
\end{align}
where $\hat F_n^*$ is the ordinary MLE (not the SMLE!) of the bootstrap sample $(T_1^*,\dd_1^*),\dots,(T_n^*,\dd_n^*)$.

Let $U_{\a}^*(t)$ be the $\a$-th percentile of the $B$ bootstrap values $Z_{n,h}^*(t)$. Then, disregarding the bias for the moment, the following bootstrap $1-\a$ interval is suggested:
\begin{equation}
\label{CI_type1}
\left[\tilde F_{nh}(t)-U_{1-\a/2}^*(t)S_{nh}(t),
\tilde F_{nh}(t)-U_{\a/2}^*(t)S_{nh}(t)\right],
\end{equation}
where
$$
S_{nh}(t)^2=n^{-2}\sum_{i=1}^n\left\{K_h(t-T_i)-K_h(t+T_i)-K_h(2b-t-T_i)\right\}^2\left(\dd_i-\hat F_n(T_i)\right)^2\,.
$$
The bootstrap confidence interval is inspired by the fact that the SMLE is asymptotically equivalent to the toy estimator
\begin{align*}
F_{nh}^{toy}(t)&=\int\left\{\IK_h(t-u)+\IK_h(t+u)-\IK_h(2b-t-u)\right\}\,dF_0(u)\nonumber\\
&\qquad+\frac1n\sum_{i=1}^n\frac{\left\{K_h(t-T_i)-K_h(t+T_i)-K_h(2b-t-T_i)\right\}\,\left\{\dd_i-F_0(T_i)\right\}}{g(T_i)}\,,
\end{align*}
the variance of which can be estimated by
\begin{align*}
S_n(t)^2=\frac1{n^2}\sum_{i=1}^n\frac{\left\{K_h(t-T_i)-K_h(t+T_i)-K_h(2b-t-T_i)\right\}^2\,\left\{\dd_i-F_0(T_i)\right\}^2}{g(T_i)^2},
\end{align*}
and also by Theorem 4.2, p.\  365 in \cite{piet_geurt_birgit:10}, which states that,
if $h\sim cn^{-1/5}$, under the conditions of that theorem, for each $t\in(0,b)$,
$$
n^{2/5}\left\{\tilde F_{nh}(t)-F_0(t)\right\}\stackrel{{\cal D}}\longrightarrow N\left(\mu,\s^2\right),
\qquad n\to\infty,
$$
where
$$
\mu=\tfrac12c^2f_0'(t)\int u^2K(u)\,du
$$
and
$$
\s^2=\frac{F_0(t)\{1-F_0(t)\}}{cg(t)}\int K(u)^2\,du.
$$

We now first study the behavior of intervals of type (\ref{CI_type1}) for a situation where the asymptotic bias plays no role (the uniform distribution) and compare the behavior of the intervals with the confidence intervals, based on LR tests for the MLE.

\subsection*{Simulation for uniform distributions}
We generated $1000$ samples $(T_1,\dd_1),\dots,(T_n,\dd_n)$ by generating $T_1,\dots,T_n$, $n=1000$, from the uniform distribution on $[0,2]$ and, independently, a sample $X_1,\dots,X_n$, also from the uniform distribution on $[0,2]$. If $X_i\le T_i$ we get a value $\dd_i=1$, otherwise $\dd_i=0$. For each such sample $(T_1,\dd_1),\dots,(T_n,\dd_n)$ we generated $1000$ bootstrap samples, and computed the $25$th and $975$th percentile of the values (\ref{Z_n^*}) at the points $t_j=0.02,0.04,\dots,1.98$. On the basis of these percentiles we constructed the confidence intervals (\ref{CI_type1}) for all of the ($99$) $t_j$'s and checked whether $F_0(t_j)$ belonged to it. The percentages of simulation runs that $F_0(t_j)$ did not belong to the interval are shown in Figure \ref{fig:percentages_1000Uniform}. We likewise computed the confidence interval, based on the LR test for the MLE for each $t_j$, and also counted the percentages of times that $F_0(t_j)$ did not belong to the interval. The corresponding confidence intervals for one sample are shown in Figure \ref{fig:CI_onesample_Uniform1000}.

\begin{figure}[!ht]
\begin{subfigure}[b]{0.4\textwidth}
\includegraphics[width=\textwidth]{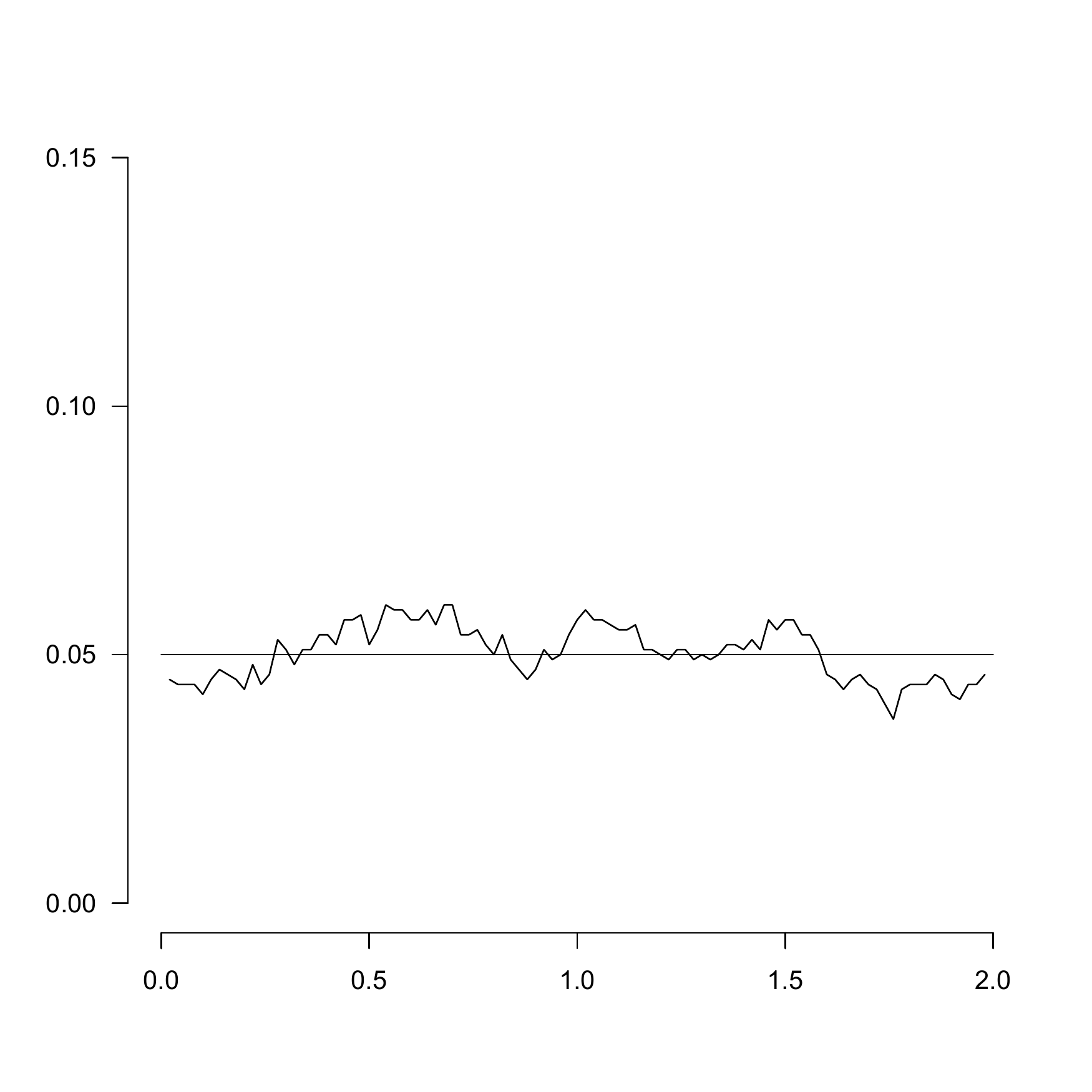}
\caption{}
\label{fig:SMLE_coverage_unif}
\end{subfigure}
\hspace{1cm}
\begin{subfigure}[b]{0.4\textwidth}
\includegraphics[width=\textwidth]{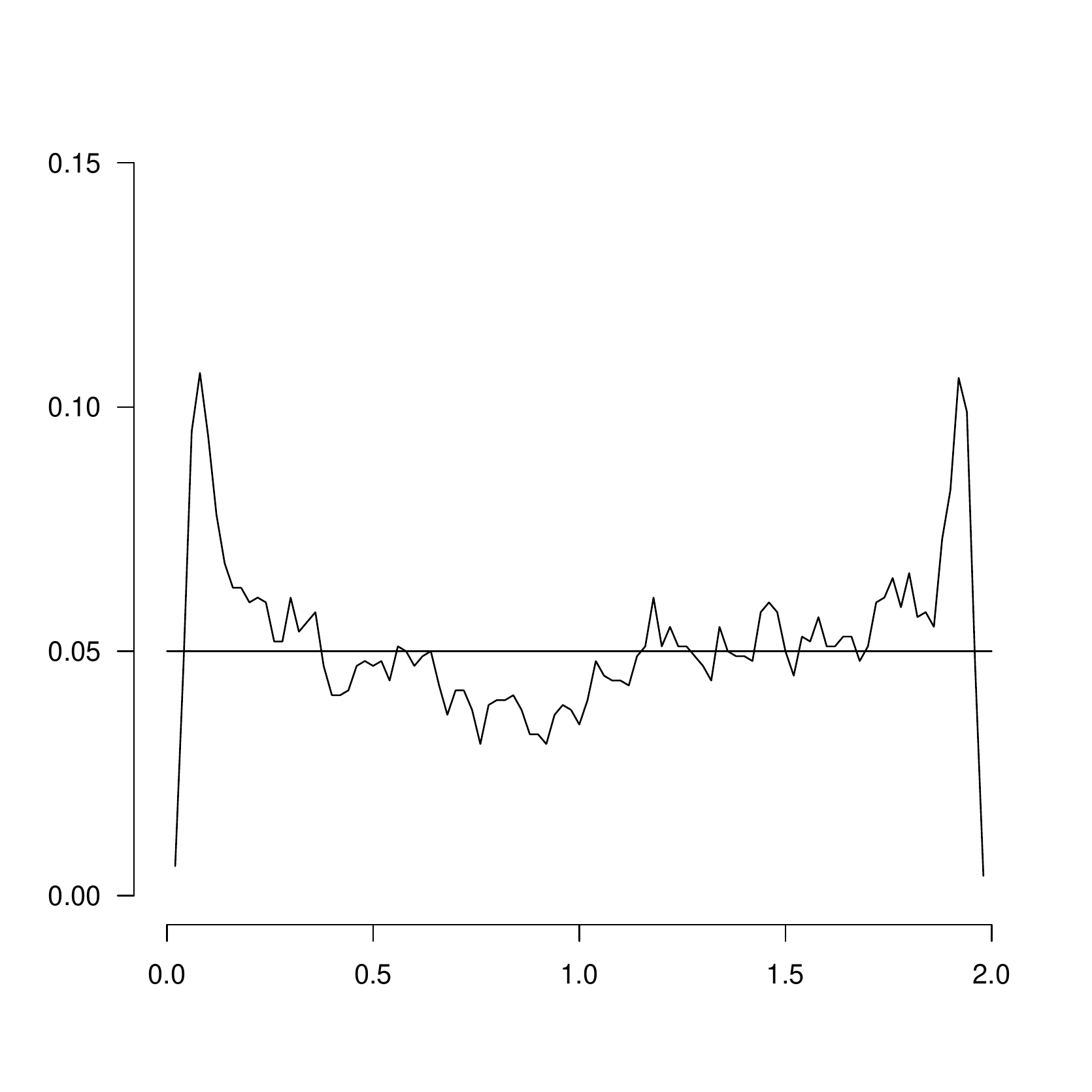}
\caption{}
\label{fig:Jon_Mouli_coverage_unif}
\end{subfigure}
\caption{Uniform samples. Proportion of times that $F_0(t_i),\, t_i=0.02,0.02,\dots,1.98$ is not in the $95\%$ CI's in $1000$ samples $(T_1,\dd_1)\dots,(T_n,\dd_n)$ using the SMLE and $1000$ bootstrap samples from the sample $(T_1,\dd_1)\dots,(T_n,\dd_n)$. In (a), the SMLE is used with CI's given in (\ref{CI_type1}).
In (b) CI's are based on the LR test. The observations are based on two independent samples of $T_i$'s and $_i$'s, uniformly distributed on $[0,2]$.}
\label{fig:percentages_1000Uniform}
\end{figure}

\begin{figure}[!ht]
\begin{subfigure}[b]{0.4\textwidth}
\includegraphics[width=\textwidth]{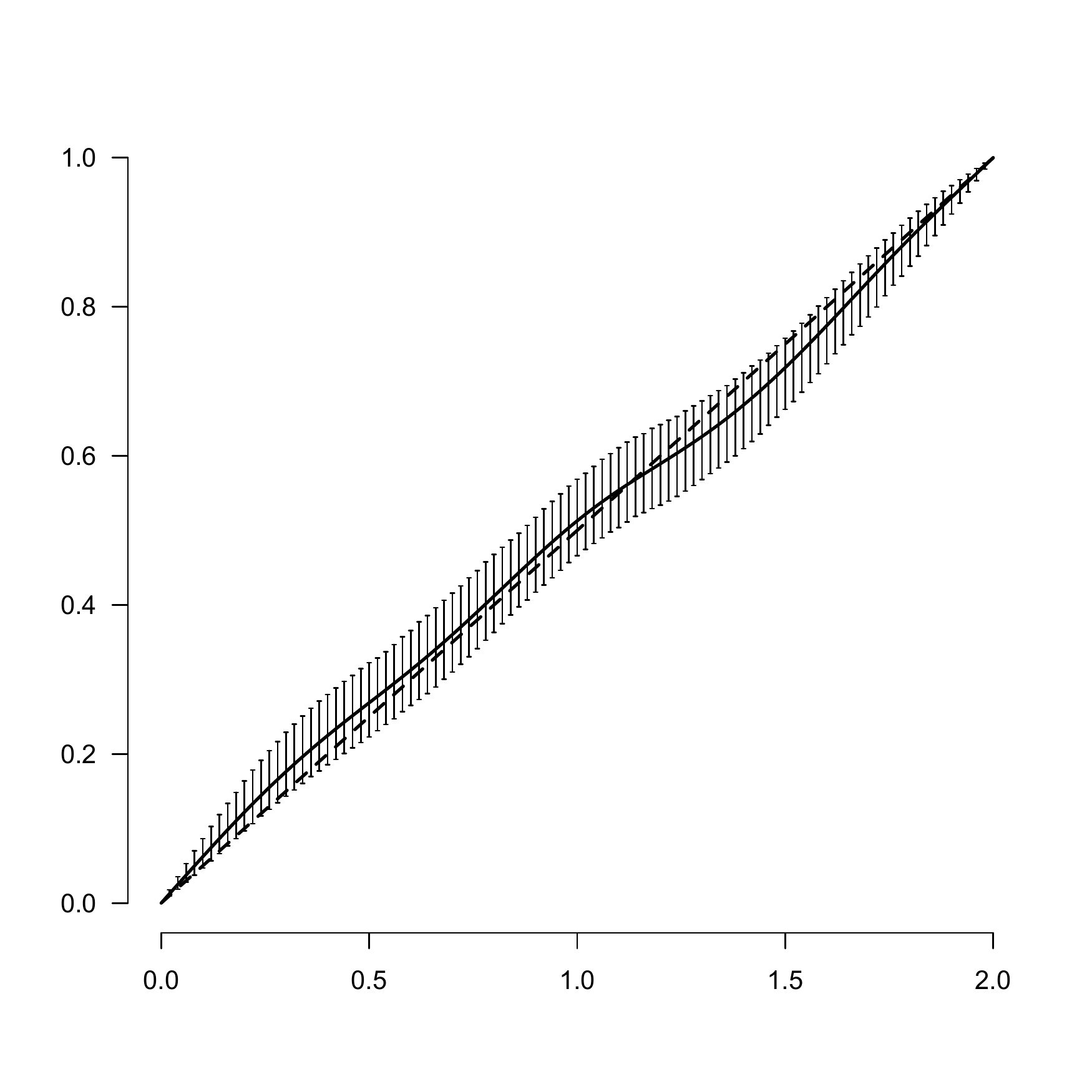}
\caption{}
\label{fig:CI1000_Uniform}
\end{subfigure}
\hspace{1cm}
\begin{subfigure}[b]{0.4\textwidth}
\includegraphics[width=\textwidth]{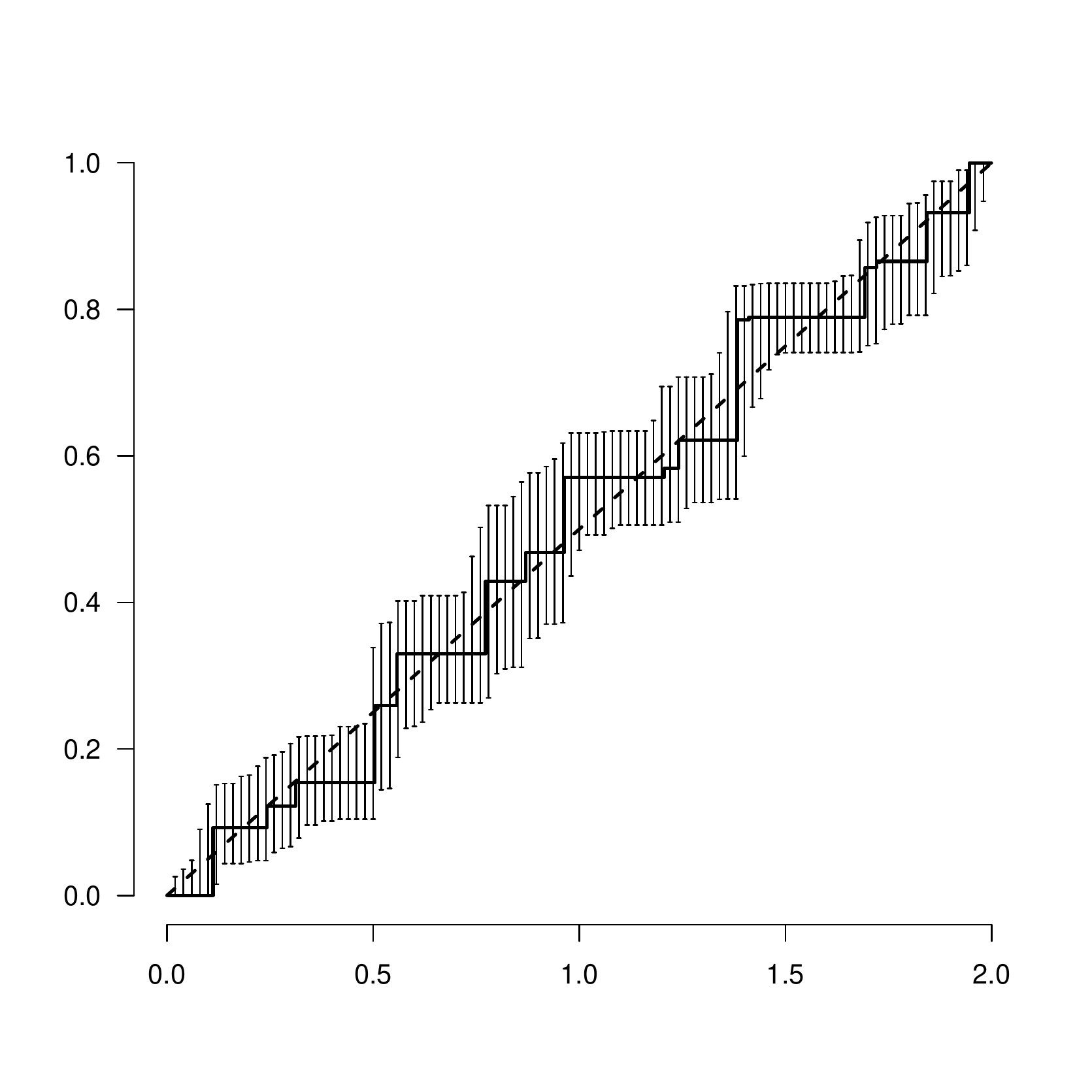}
\caption{}
\label{fig:CI1000_Uniform_Jon_Mouli}
\end{subfigure}
\caption{Uniform samples: $95\%$ confidence intervals for $F_0(t_i),\, t_i=0.02,0.02,\dots,1.98$ for one sample $(T_1,\dd_1)\dots,(T_n,\dd_n)$. For (a) the SMLE and $1000$ bootstrap samples are used; $F_0$ is dashed and the SMLE solid. For (b) the LR test is used; $F_0$ is dashed and the MLE solid.}
\label{fig:CI_onesample_Uniform1000}
\end{figure}

\subsection*{Simulation for truncated exponential distributions}
To investigate the role of the asymptotic bias of the SMLE, we also generated $1000$ samples $(T_1,\dd_1),\dots,(T_n,\dd_n)$ by generating $T_1,\dots,T_n$, $n=1000$, from the uniform distribution on $[0,2]$ and, independently,  $X_1,\dots,X_n$, from the truncated exponential distribution on $[0,2]$, with density
$$
f_0(x)=\frac{e^{-x}}{1-e^{-2}},\,x\in[0,2].
$$
If $X_i\le T_i$ we get $\dd_i=1$, otherwise $\dd_i=0$. For each such sample $(T_1,\dd_1),\dots,(T_n,\dd_n)$ we generated $B=1000$ bootstrap samples, and computed the confidence intervals in the same way as for the uniform samples, discussed above, where the interval is of the form (\ref{CI_type1}) and bias is neglected. This is compared in Figure \ref{fig:percentages_1000TE_biascor} with the results for confidence intervals of the form
\begin{equation}
\label{CI_type2}
\left[\tilde F_{nh}(t)-\b(t)-U_{1-\a/2}^*(t)S_n(t),
\tilde F_{nh}(t)-\b(t)-U_{\a/2}^*(t)S_n(t)\right],
\end{equation}
where $U_{\a/2}^*$, $U_{1-\a/2}^*$ and $S_n(t)$ are as in (\ref{CI_type1}), and where $\b(t)$ is the actual asymptotic bias, which is, for $t\in[h,2-h]$, given by
$$
\tfrac12 f_0'(t)h^2\int u^2 K(u)\,du=-\frac{h^2e^{-t}\int u^2 K(u)\,du}{2\bigl\{1-e^{-2}\bigr\}}\,.
$$
For $t\notin[h,2-h]$ this expression is of the form
$$
-\frac{h^2e^{-t}\left\{\int u^2 K(u)\,du-2\int_{v}^1(u-v)^2K(u)\,du\right\}}{2\bigl\{1-e^{-2}\bigr\}}\,,
$$
where $v=t/h$, if $t\in[0,h)$ and $v=(2-t)/h$ if $t\in(2-h,2]$.

It is seen in Figure \ref{fig:percentages_1000TE_biascor} that if we use the bandwidth $2n^{-1/5}$ and do not use bias correction for the SMLE, the $95\%$ coverage is off at the left end (where the
bias is largest), but that the intervals are `on target' if we add the asymptotic bias to the intervals, as in (\ref{CI_type2}). However, we cannot use the method of Figure \ref{fig:percentage_bias_correction} in practice, since the actual bias will usually not be available.
We are faced here with a familiar problem in nonparametric confidence intervals, and we can take several approaches. Two possible solutions are {\it estimation of the bias} and {\it undersmoothing}.

\begin{figure}[!ht]
\begin{subfigure}[b]{0.4\textwidth}
\includegraphics[width=\textwidth]{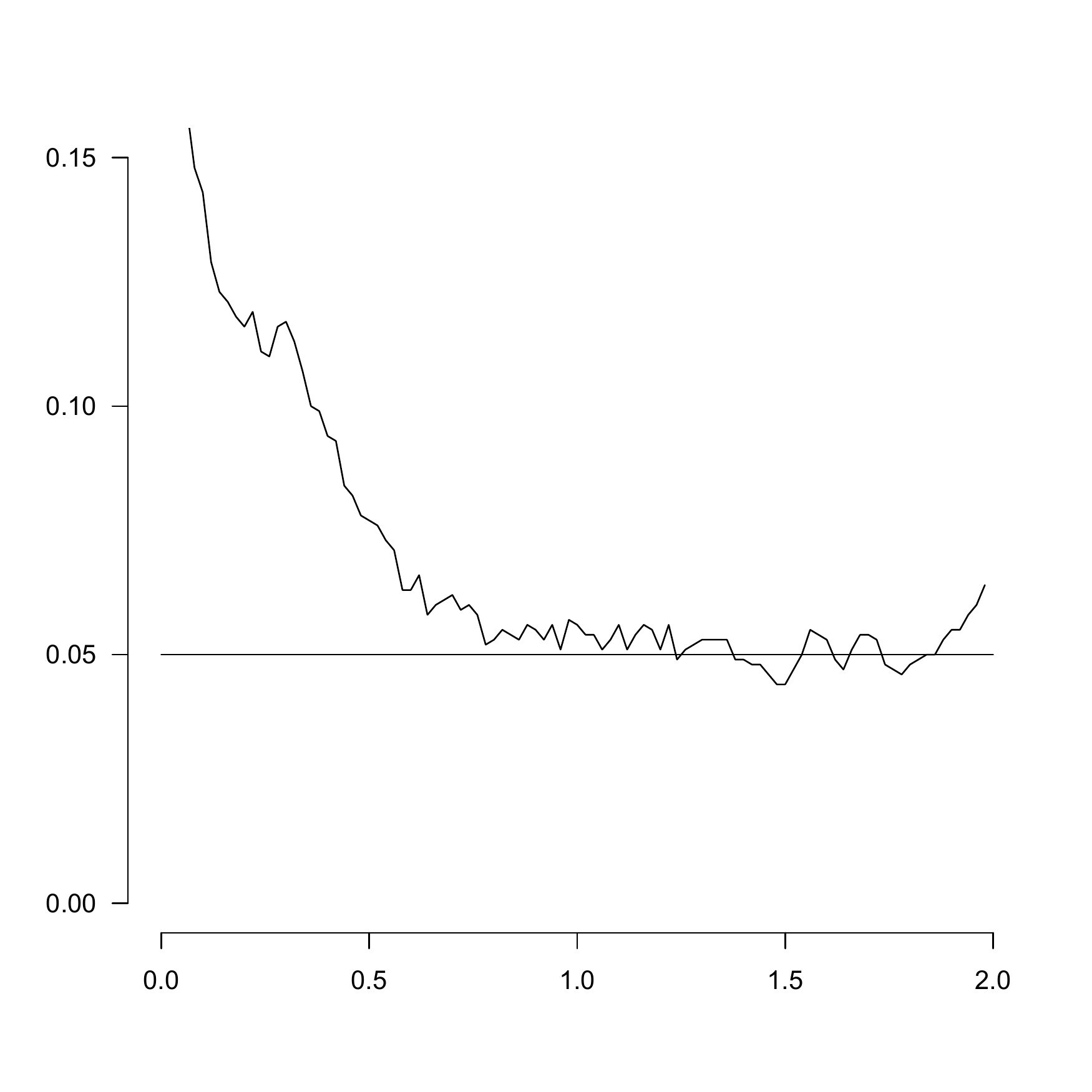}
\caption{}
\label{fig:percentage_uncorrected}
\end{subfigure}
\hspace{1cm}
\begin{subfigure}[b]{0.4\textwidth}
\includegraphics[width=\textwidth]{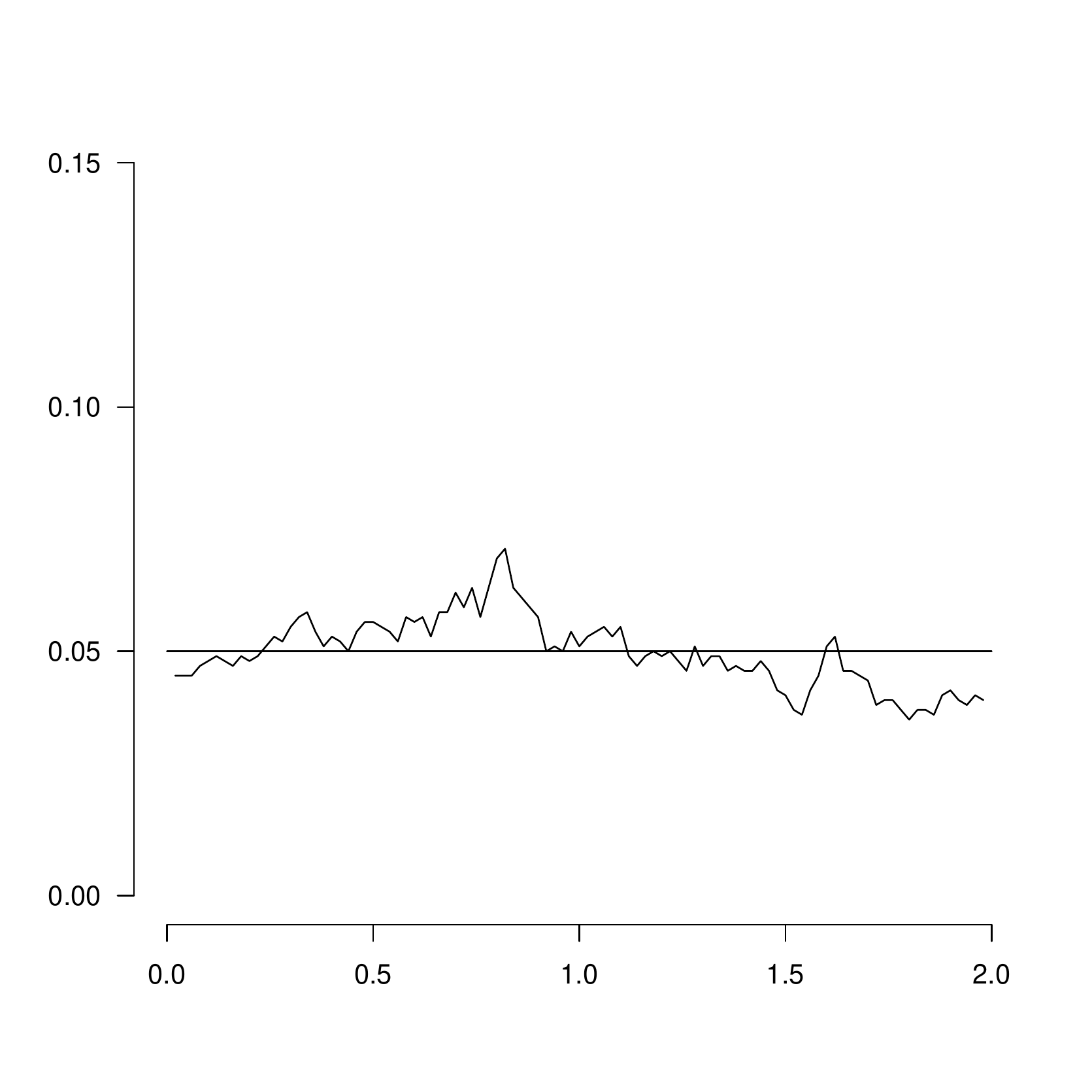}
\caption{}
\label{fig:percentage_bias_correction}
\end{subfigure}
\caption{Coverage for the truncated exponential distribution function $F_0$. Proportion of times that $F_0(t_i),\, t_i=0.01,0.02,\dots$ is not in the $95\%$ CI's in $1000$ samples $(T_1,\dd_1)\dots,(T_n,\dd_n)$. In (a) the confidence intervals (\ref{CI_type1}) are used, in (b) the bias corrected confidence intervals (\ref{CI_type2}). The bandwidth is $h=2n^{-1/5}$}
\label{fig:percentages_1000TE_biascor}
\end{figure}

\begin{figure}[!ht]
\begin{subfigure}[b]{0.4\textwidth}
\includegraphics[width=\textwidth]{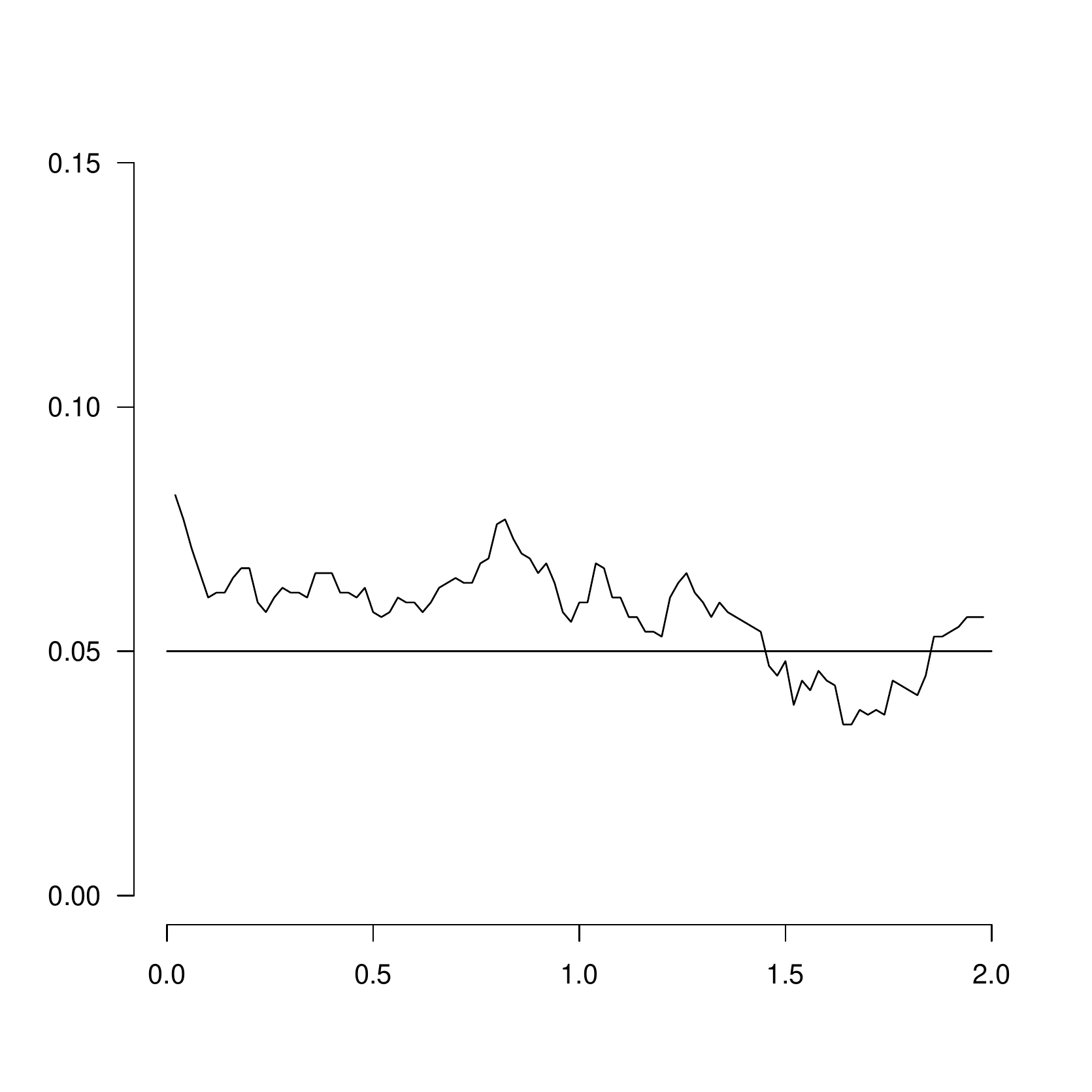}
\caption{}
\label{fig:percentagesTE1000undersmooth}
\end{subfigure}
\hspace{1cm}
\begin{subfigure}[b]{0.4\textwidth}
\includegraphics[width=\textwidth]{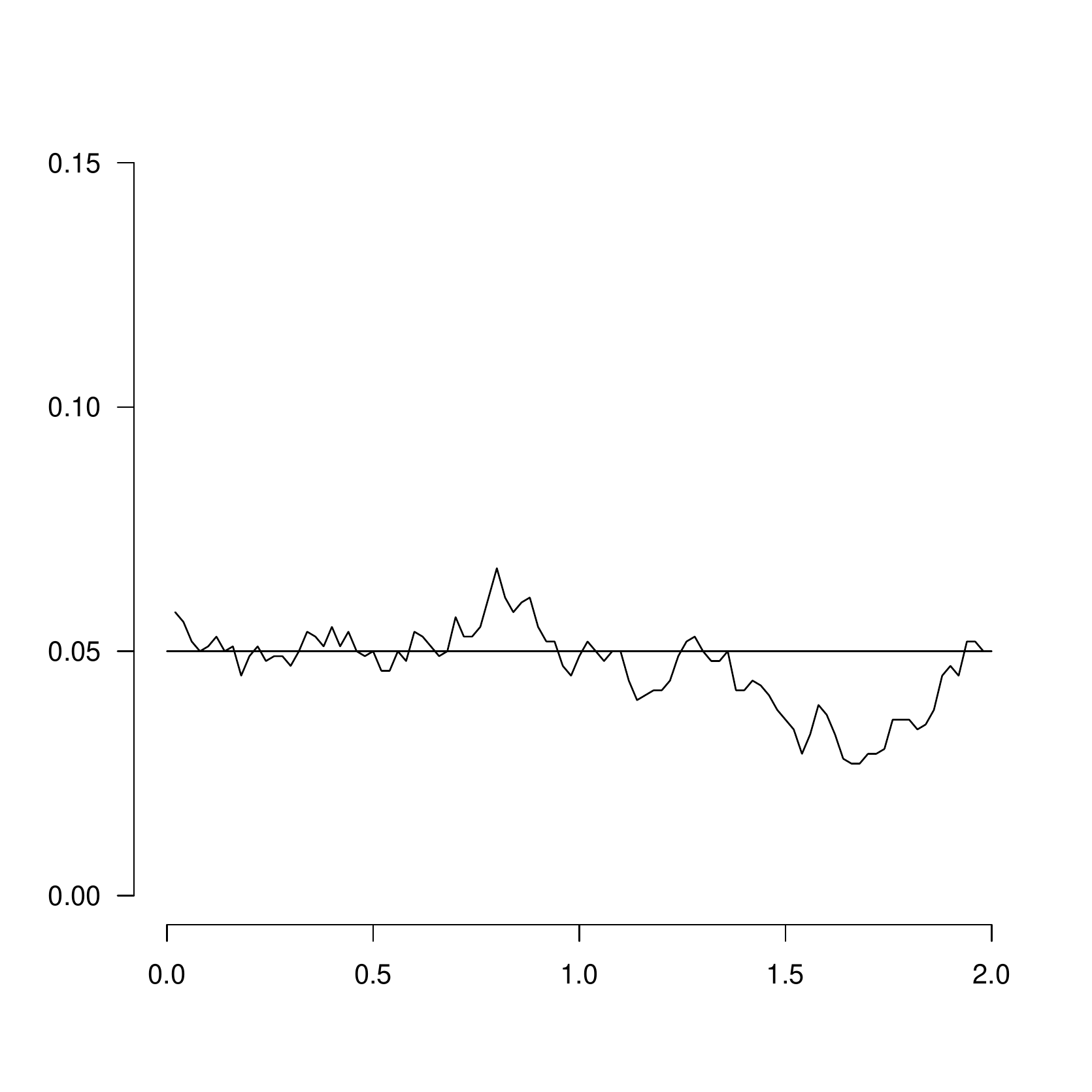}
\caption{}
\label{fig:percentagesTE1000undersmooth2}
\end{subfigure}
\caption{Coverage for the truncated exponential distribution function $F_0$. Proportion of times that $F_0(t_i),\, t_i=0.01,0.02,\dots$ is not in the CI's in $1000$ samples $(T_1,\dd_1)\dots,(T_n,\dd_n)$. In (a) the SMLE and (\ref{CI_type1}) are used for $\a=0.025$ with undersmoothing. In (b), (\ref{CI_type1}) is used with $\a=0.02$ instead of $\a=0.025$ and the same undersmoothing as in (a). The bandwidth is $h=2n^{-1/4}$}
\label{fig:percentages_1000TE}
\end{figure}

In the present case it turns out to be very difficult to estimate the bias term sufficiently accurately. Moreover, \cite{hall:92} argues that undersmoothing has several advantages; one of these is that estimation of the bias term is no longer necessary. For the present model, we changed the bandwidth of the SMLE from $2n^{-1/5}$ to $2n^{-1/4}$ (with $n=1000$) and computed the confidence intervals again by the bootstrap procedure, given above. This gave a remarkable improvement of the coverage at the left end, as is shown in Figure \ref{fig:percentages_1000TE}. Nevertheless, the undersmoothing has the tendency to make the confidence interval slightly liberal (anti-conservative), as can be seen from Figure \ref{fig:percentagesTE1000undersmooth}, so one might prefer to take for example the $20$th and $980$th percentile if one wants to have a coverage $\ge95\%$. The effect of this method is shown in Figure \ref{fig:percentagesTE1000undersmooth2} and the coverage of this method is compared to the coverage of the method, using the LR test, as in \cite{mouli_jon:05}, in Figure \ref{fig:percentages_1000TE2}. Undersmoothing, together with the method of Figure \ref{fig:percentagesTE1000undersmooth2}, will generally of course still produce narrower confidence intervals than the method, based on the LR test (which is based on cube root $n$ asymptotics), under the appropriate smoothness conditions, as can be seen in Figure \ref{fig:CI_onesample_TE1000}.

\begin{figure}[!ht]
\begin{subfigure}[b]{0.4\textwidth}
\includegraphics[width=\textwidth]{percentagesTE1000undersmooth2.pdf}
\caption{}
\label{fig:SMLE_coverage_truncexp}
\end{subfigure}
\hspace{1cm}
\begin{subfigure}[b]{0.4\textwidth}
\includegraphics[width=\textwidth]{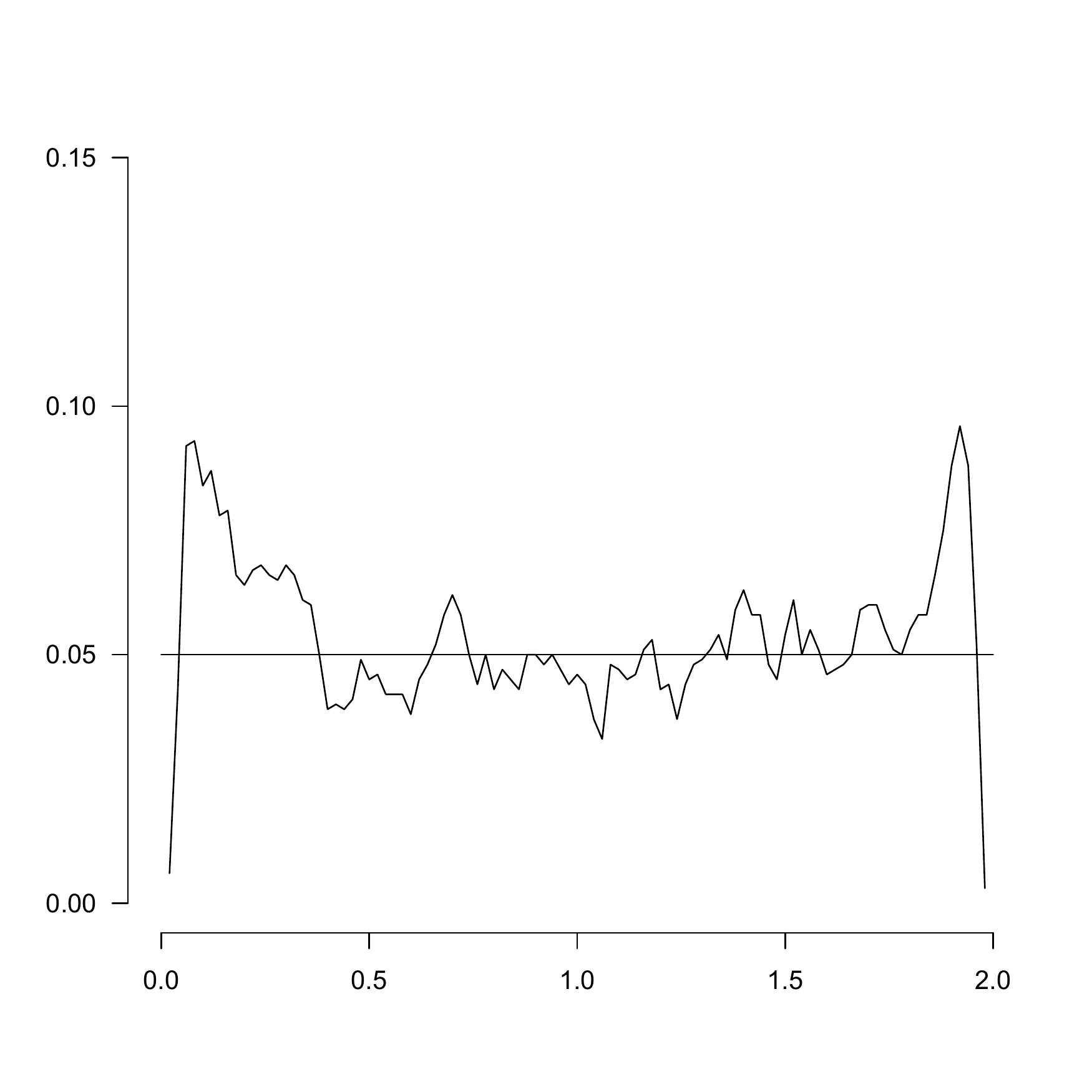}
\caption{}
\label{fig:Jon_Mouli_coverage_TE}
\end{subfigure}
\caption{Truncated exponentials for $F_0$. Proportion of times that $F_0(t_i),\, t_i=0.01,0.02,\dots$ is not in the CI's in $1000$ samples $(T_1,\dd_1)\dots,(T_n,\dd_n)$. Figure (a) uses the SMLE with the method of Figure \ref{fig:percentagesTE1000undersmooth2}. In (b) the LR test for the MLE is used.}
\label{fig:percentages_1000TE2}
\end{figure}

\begin{figure}[!ht]
\begin{subfigure}[b]{0.4\textwidth}
\includegraphics[width=\textwidth]{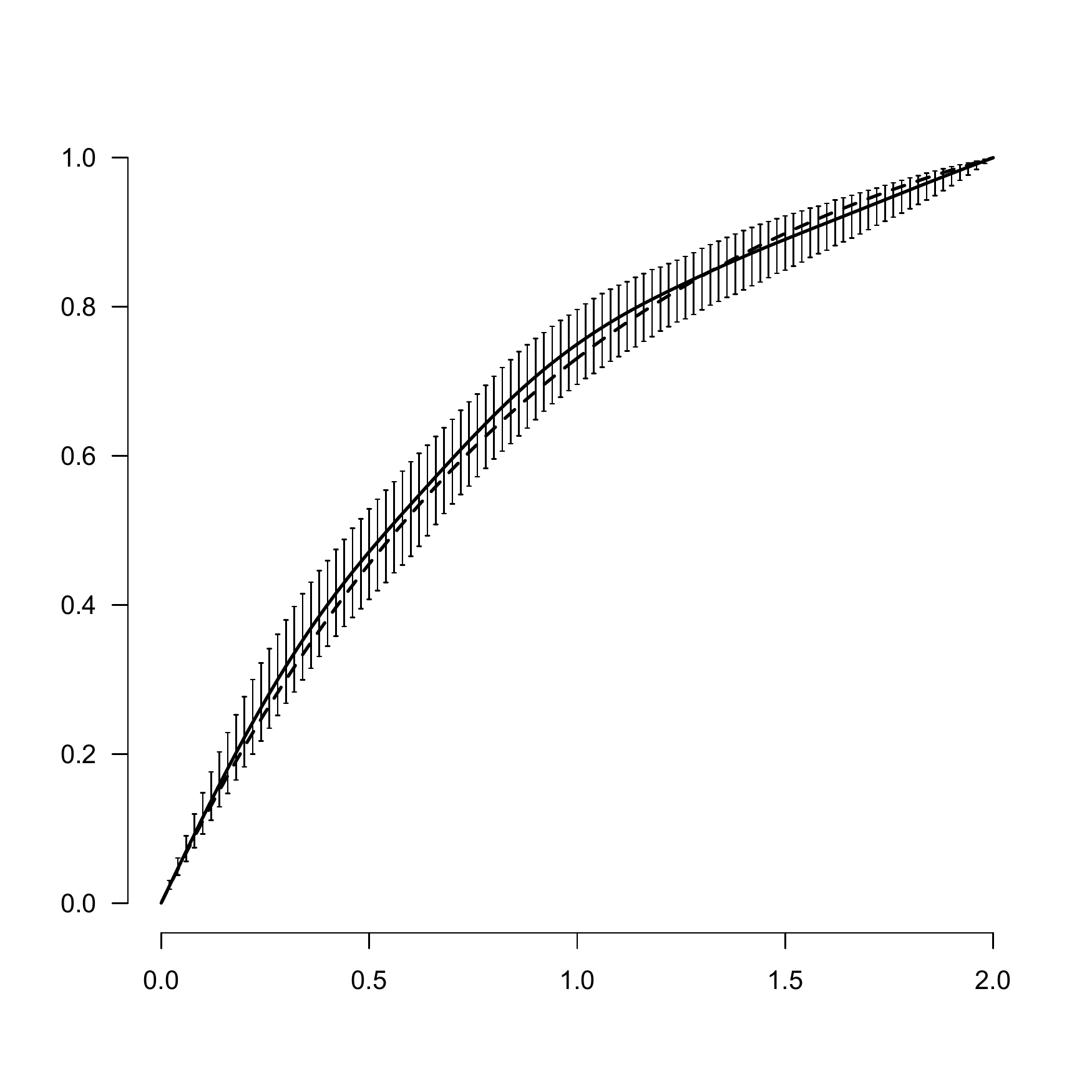}
\caption{}
\label{fig:CI1000_TE}
\end{subfigure}
\hspace{1cm}
\begin{subfigure}[b]{0.4\textwidth}
\includegraphics[width=\textwidth]{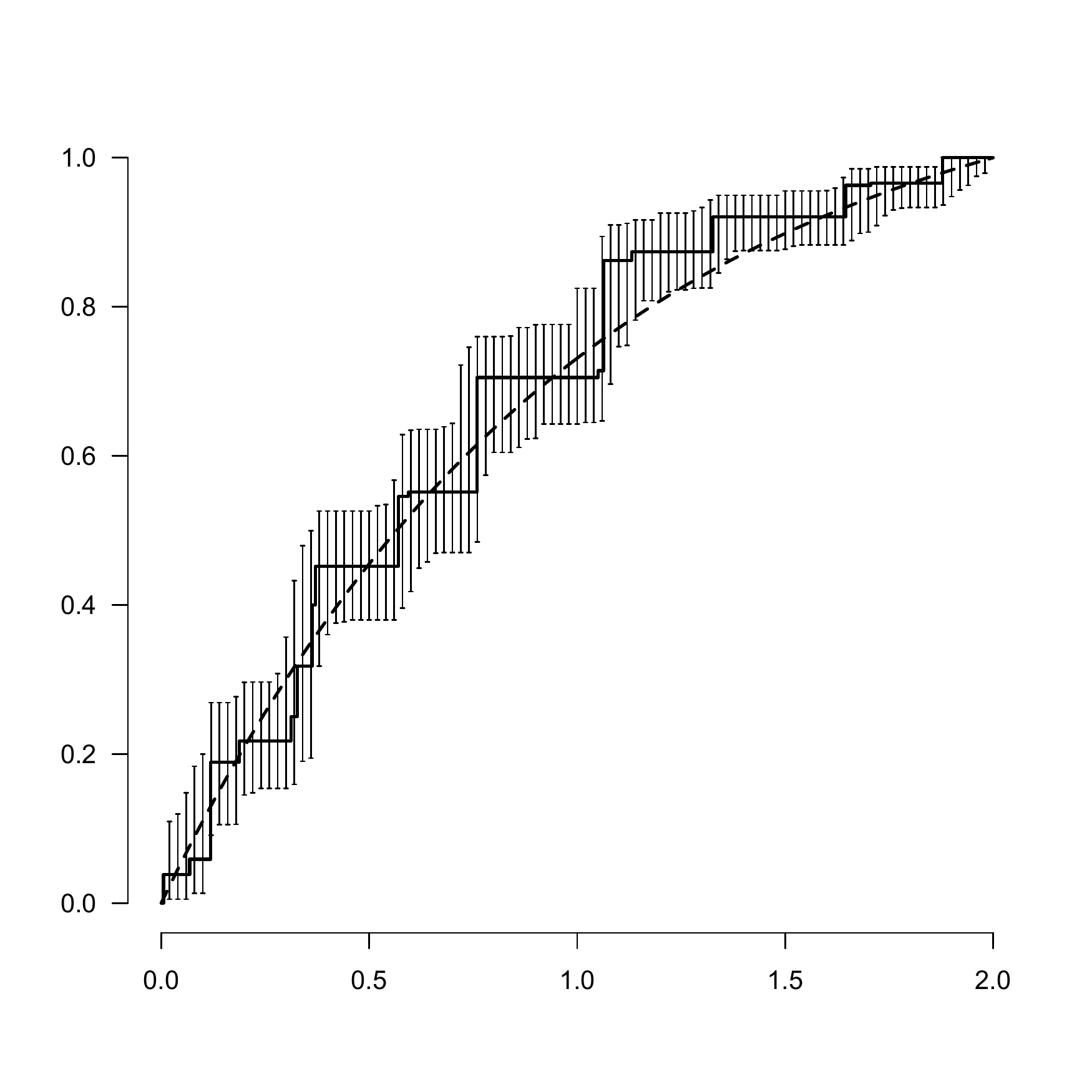}
\caption{}
\label{fig:CI1000_TE_Jon_Mouli}
\end{subfigure}
\caption{Truncated exponentials for $F_0$: $95\%$ confidence intervals for $F_0(t_i),\, t_i=0.01,0.02,\dots$ for one sample $(T_1,\dd_1)\dots,(T_n,\dd_n)$. In (a)  the SMLE is used with undersmoothing and the method of Figure \ref{fig:percentagesTE1000undersmooth2}. Dashed: real $F_0$, solid: SMLE. In (b) the LR test for the MLE is used. Dashed: real $F_0$, solid: MLE.}
\label{fig:CI_onesample_TE1000}
\end{figure}

Another way of bias correction is to use a higher order kernel in the definition of the SMLE, for example a $4$-th order kernel, but still use a bandwidth of order $n^{-1/5}$. Since a $4$-th order kernel has necessarily negative parts, and since the estimate of $F_0$ will be close to zero or $1$ at the boundary of the interval, this gives difficulties at the end of the interval. We therefore stick to the method described above.

\section{Confidence intervals for the monotone density case}
\label{section:monotone_dens}
\setcounter{equation}{0}
In this section we construct confidence intervals for a decreasing density, in the setting of Example \ref{example:monotone_dens}. 
We start by considering the confidence intervals based on the LR tests. To this end, we first give a characterization of the restricted MLE. In view of Example \ref{exam:CurDur} below, in which the observations are on a discrete scale and therefore have ties in the observations, we denote the number of observations at the ordered points $t_i$ by $w_i$. The number of strictly different observation times is denoted by $m$, and the total number of observations is again denoted by $n$, so $n=\sum_{j=1}^m w_j$.

We recall the definition of the unrestricted MLE in this case.

\begin{lemma}
\label{lemma:MLE_dens_unrestricted}
Let $\hat f=(\hat f_1,\dots,\hat f_m)$ be the vector of left-continuous slopes of the least concave majorant of the cusum diagram with points $(0,0)$ and
\begin{equation}
\label{cusum_dens}
\left(t_j,\frac1n\sum_{i=1}^jw_i\right),
\qquad j=1,\dots,m,
\end{equation}
Then $\hat f$ maximizes $\sum_{i=1}^m w_i\log f_i$, under the condition that $f$ is nonincreasing and the side condition $\sum_{i=1}^m f_i\left(t_i-t_{i-1}\right)=1$.
\end{lemma}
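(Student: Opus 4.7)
The plan is to mirror the Fenchel argument used in Lemma \ref{lemma:one_lambda_curstat}, this time for the concave objective $\ell(f)=\sum_{i=1}^m w_i\log f_i$ restricted to the cone $C=\{f\in\R^m:f_1\ge f_2\ge\cdots\ge f_m\ge 0\}$ together with the single linear constraint $\sum_{i=1}^m f_i(t_i-t_{i-1})=1$. First I would form the Lagrangian
\[
\varphi_\lambda(f)=\sum_{i=1}^m w_i\log f_i-n\lambda\left(\sum_{i=1}^m f_i(t_i-t_{i-1})-1\right),\qquad\lambda\in\R,
\]
and use the fact that every $f\in C$ admits the unique nonnegative expansion $f=\sum_{k=1}^m(f_k-f_{k+1})g_k$ (with the convention $f_{m+1}:=0$) in terms of the generators $g_k=(1,\dots,1,0,\dots,0)$ having $k$ leading ones.

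Next I would apply the necessary and sufficient Fenchel conditions for maximization over a convex cone, in the form used in the proof of Lemma \ref{lemma:one_lambda_curstat}, to obtain the inequalities
\[
\langle\nabla\varphi_\lambda(f),g_k\rangle=\sum_{i=1}^k\left\{\frac{w_i}{f_i}-n\lambda(t_i-t_{i-1})\right\}\le 0,\qquad k=1,\dots,m,
\]
with equality whenever $f_k>f_{k+1}$, plus the homogeneity identity $\langle\nabla\varphi_\lambda(f),f\rangle=0$. The latter reads $n-n\lambda\sum_i f_i(t_i-t_{i-1})=n-n\lambda=0$, so $\lambda=1$ is forced by the density constraint itself. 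Splitting a maximizer $f$ into maximal blocks of constancy $B_1,\dots,B_r$ with common block values $c_j$, and subtracting the equalities at consecutive block endpoints, one gets
\[
c_j=\frac{\sum_{i\in B_j}w_i/n}{\sum_{i\in B_j}(t_i-t_{i-1})},
\]
which is the chord slope of the cusum diagram (\ref{cusum_dens}) over $B_j$. The Fenchel inequalities at indices interior to $B_j$, after multiplication by the common value $c_j$ (exactly as in the passage from (\ref{fenchel1}) to (\ref{fenchel2a})), say that the polygon with slopes $(c_j)$ stays on or above the cusum diagram throughout each block; together with the monotonicity of $(c_j)$ inherited from $f\in C$, this is precisely the defining property of the least concave majorant.

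Conversely, defining $\hat f_k$ as the left-continuous slope at $t_k$ of the least concave majorant of (\ref{cusum_dens}), equivalently by the maxmin formula
\[
\hat f_k=\max_{j\le k}\min_{l\ge k}\frac{(1/n)\sum_{i=j}^l w_i}{t_l-t_{j-1}},
\]
gives a nonincreasing vector with $\sum_k\hat f_k(t_k-t_{k-1})=\F_n(t_m)-0=1$, since the least concave majorant of (\ref{cusum_dens}) and $\F_n$ agree at $t_m$. By construction this $\hat f$ satisfies every Fenchel inequality with $\lambda=1$, and strict concavity of $\ell$ on the positive orthant makes $\hat f$ the unique maximizer. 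The main obstacle I foresee is precisely the translation between the Fenchel inequalities, which involve the reciprocals $w_i/f_i$, and the linear geometric statement about the cusum diagram; this reciprocal is removed by exploiting piecewise constancy of $f$ on blocks, and the automatic calibration $\lambda=1$ is what makes the geometry of the least concave majorant, rather than a rescaled variant, come out of the Lagrangian.
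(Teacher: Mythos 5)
Your plan is correct and follows essentially the same route as the paper's own proof: introduce a Lagrange multiplier for the integral constraint, apply the Fenchel equality condition $\langle\nabla\varphi_\lambda(\hat f),\hat f\rangle=0$ to force $\lambda=1$, and then read the generator inequalities (after multiplying through by the block values to remove the reciprocals $w_i/f_i$) as the defining property of the least concave majorant of the cusum diagram. Your added details — the block/chord-slope identification, the converse verification via the maxmin formula, and uniqueness from strict concavity — are correct elaborations of steps the paper leaves implicit.
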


For convenience, we provide the proof below.

\begin{proof}
Introducing the Lagrange multiplier $\l$, we get the maximization problem of maximizing
\begin{align}
\label{criterion_LSfunction_dens}
\f_{\l,\m}(f_1,\dots,f_m)&=\frac1n\sum_{i=1}^m w_i\log f_i
-\l\left\{\sum_{i=1}^{m} f_i\left(t_i-t_{i-1}\right)-1\right\},
\end{align}
over the convex cone $C_m=\{(f_1,\dots,f_m):f_1\ge f_2\ge\dots f_m\ge0\}$, where we look for $\hat\l\in\R_+$  such that the maximizer $\hat f=(\hat f_1,\dots,\hat f_n)$ satisfies
$$
\sum_{i=1}^m\hat f_i\left(t_i-t_{i-1}\right)=1.
$$

Using the equality part of the Fenchel conditions for this maximization problem, the solution has to satisfy
\begin{align}
\label{fenchel_eq_monotone}
\left\langle\nabla\f_{\hat\l}(\hat f),\hat f\right\rangle&=\frac1n\sum_{i=1}^m w_i-\hat\l\sum_{i=1}^m\left(t_i-t_{i-1}\right)\hat f_i=1-\hat\l\sum_{i=1}^m\left(t_i-t_{i-1}\right)\hat f_i=1-\hat\l=0.
\end{align}
So $\hat\l=1$.

The generators of the cone $C_m$ are of the form
$$
g_1=(1,0,0,\dots,0,0),\,g_2=(1,1,0,\dots,0,0),\dots, g_m=(1,1,1,\dots,1,1).
$$
The inequality part of the Fenchel conditions can therefore be written as
\begin{align*}
\left\langle\nabla\f_{\hat\l,\hat\m}(\hat f),g_j\right\rangle&=\sum_{i=1}^j \left\{\frac{w_i}{nf_i}-\left(t_i-t_{i-1}\right)\right\}\le0,\qquad j=1,\dots,m.
\end{align*}
Using that $\hat f_m>0$, these conditions are equivalent to:
\begin{align*}
&\sum_{i=1}^j \left\{\frac{w_i}{n}-\left(t_i-t_{i-1}\right)\hat f_i\right\}\le0,\qquad j=1,\dots,m,
\end{align*}
Since, by (\ref{fenchel_eq_monotone}), we also have:
\begin{align*}
&\sum_{i=1}^m \left\{\frac{w_i}{n}-\left(t_i-t_{i-1}\right)\hat f_i\right\}=0,
\end{align*}
this proves our claim.
\end{proof}

We now add the condition $f(t_0)=a$ and proceed in a similar way as in the preceding section to characterize the solution under this restriction. However, because of the side condition that the density integrates to $1$, we cannot allow the density to have a jump in the interval, containing $t_0$, as we did for the current status model in that section, without making further adaptations of the function. In order not to complicate things unnecessarily, we restrict the functions in our set to functions, only having jumps at the observation points, and do not allow jumps at $t_0$. Estimators, arising in this way, will be asymptotically equivalent to the estimators which would allow an extra jump at $t_0$.

\begin{lemma}
\label{lemma:one_lambda2}
Let $t_0\in(t_{i_0-1},t_{i_0})$. 
We define $\hat\m\in\R$ to be the solution (in $\m$) of the equation
\begin{equation}
\label{equation_mu2}
\min_{1\le i\le }\max_{\le j\le m}\frac{\sum_{k=i}^j w_k/n+\m a}{\left(t_j-t_{i-1}\right)}
=a\{1+\m a\}.
\end{equation}
and define $\hat f_i^{(0)}$ by the left-hand slope of the least concave majorant of the cusum diagram with points $(0,0)$ and
cusum diagram with points $(0,0)$ and
\begin{equation}
\label{cusum_mu2}
\left((1+\hat\m a)\,t_j,\sum_{i=1}^j\left\{\frac{w_i}{n}+\hat\m a1_{\{i=i_0\}}\right\}\right),
\qquad j=1,\dots,m,
\end{equation}
Then $\hat f$ maximizes $\sum_{i=1}^m w_i\log f_i$, for non-increasing sequences $(f_1,\dots,f_m)$, under the side conditions $\sum_{i=1}^m f_i\left(t_i-t_{i-1}\right)=1$ and $f(t_{i_0})=a$.

\end{lemma}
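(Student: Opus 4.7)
The plan is to parallel the proof of Lemma \ref{lemma:one_lambda_curstat}, but now with \emph{two} Lagrange multipliers, one for the normalization $\sum_i f_i(t_i-t_{i-1})=1$ and one for the value constraint $f_{i_0}=a$. I would introduce
\begin{equation*}
\phi_{\lambda,\mu}(f) = \frac1n\sum_{i=1}^m w_i\log f_i - \lambda\Bigl(\sum_{i=1}^m f_i(t_i-t_{i-1})-1\Bigr) + \mu(f_{i_0}-a),
\end{equation*}
which is concave on the cone $C_m$, so the Fenchel conditions will be both necessary and sufficient. The equality part $\langle\nabla\phi_{\hat\lambda,\hat\mu}(\hat f^{(0)}),\hat f^{(0)}\rangle = 0$ collapses, via the two side conditions, to $1-\hat\lambda+\hat\mu a = 0$, yielding the identification $\hat\lambda = 1+\hat\mu a$. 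This is the source of the scaling factor $(1+\hat\mu a)$ appearing in the $x$-coordinates of the cusum diagram (\ref{cusum_mu2}).

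Next I would write the inequality Fenchel conditions on the generators $g_j=(1,\dots,1,0,\dots,0)$ of $C_m$, namely
\begin{equation*}
\sum_{i=1}^j\Bigl(\frac{w_i}{n\hat f_i} - \hat\lambda(t_i-t_{i-1}) + \hat\mu 1_{\{i=i_0\}}\Bigr)\le 0,
\end{equation*}
and convert them to a cusum-majorant form by the usual manipulation: multiply term by term by $\hat f_i$ and apply Abel summation, using both the monotonicity $\hat f_1\ge\cdots\ge\hat f_m\ge0$ and the constraint $\hat f_{i_0}=a$. The resulting inequalities read
\begin{equation*}
\sum_{i=1}^j\Bigl\{\tfrac{w_i}{n} + \hat\mu a\,1_{\{i=i_0\}}\Bigr\} \le \hat\lambda\sum_{i=1}^j \hat f_i(t_i-t_{i-1}),
\end{equation*}
with equality at $j=m$ and at each right endpoint of a block of constancy of $\hat f^{(0)}$. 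These are precisely the conditions for the piecewise-linear concave interpolant of $\hat f^{(0)}$, viewed against the rescaled $x$-axis $x_j=\hat\lambda t_j$, to be the least concave majorant of the cusum diagram (\ref{cusum_mu2}).

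Finally, the standard max-min characterization of LCM slopes (Theorem 1.4.4 of \cite{rwd:88}) gives the left slope at index $i_0$ as
\begin{equation*}
\min_{1\le k\le i_0}\max_{i_0\le l\le m}\frac{\sum_{i=k}^l w_i/n + \hat\mu a}{(1+\hat\mu a)(t_l-t_{k-1})},
\end{equation*}
and equating this to $a$ and clearing the factor $1+\hat\mu a$ from the denominator produces exactly the equation (\ref{equation_mu2}) characterising $\hat\mu$. Existence and uniqueness of $\hat\mu\in\R$ follow from continuity and monotonicity (in $\mu$) of the gap between the two sides of (\ref{equation_mu2}), while the sign of $\hat\mu$ is determined at $\mu=0$ by whether the unrestricted Grenander slope $\hat f_{i_0}$ exceeds or falls short of $a$. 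The main technical obstacle, as in the current status proof, is the Abel-summation/multiplication step that converts the raw Fenchel inequality (which contains $1/\hat f_i$) into the LCM inequality (linear in $\hat f_i$); the new ingredient here, absent from Lemma \ref{lemma:one_lambda_curstat}, is that the normalization multiplier $\hat\lambda$ simultaneously reshapes the $x$-axis of the cusum diagram.
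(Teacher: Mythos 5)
Your proposal follows essentially the same route as the paper's own proof: the two-multiplier Lagrangian, the equality Fenchel condition giving $\hat\lambda=1+\hat\mu a$ (hence the rescaled $x$-axis), the inequality conditions on the generators of $C_m$ converted by multiplying through by $\hat f_i$, and the max--min characterization of the LCM slope at $i_0$ yielding (\ref{equation_mu2}). The only additions beyond the paper are your explicit remarks on existence/uniqueness and the sign of $\hat\mu$, which are harmless and consistent with the treatment in Lemma \ref{lemma:order_mu_monotone}.
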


\begin{remark}
{\rm
The values of $\hat f_i$ and $\hat f_i^{(0)}$ are defined by left-continuous slopes of a concave majorant, we extend this to piecewise {\it left-continuous functions} $\hat f$ and $\hat f^{(0)}$, having the values $\hat f_i$ and $\hat f_i^{(0)}$ at $t_i$. Note that this differs from the definition of the piecewise {\it right-continuous} distribution functions $\hat F$ and $\hat F^{(0)}$ in the preceding section. Since $\hat f^{(0)}(t_{i_0})=a$ and $t_0\in(t_{i_0-1},t_{i_0})$, we have $\hat f^{(0}(t_0)=a$.
}
\end{remark}

\begin{proof}
Introducing the Lagrange multipliers $\l$ and $\mu$, we get the maximization problem of maximizing
\begin{align}
\label{criterion_LSfunction2}
\f_{\l,\m}(f_1,\dots,f_m)&=\frac1n\sum_{i=1}^m w_i\log f_i
-\l\left\{\sum_{i=1}^{m} f_i\left(t_i-t_{i-1}\right)-1\right\}+\m\left(f_{i_0}-a\right),
\end{align}
over the convex cone $C_m=\{(f_1,\dots,f_m):f_1\ge f_2\ge\dots f_m\ge0\}$, where we look for $(\hat\l,\hat\m)\in\R_+\times\R$  such that the maximizer $\hat f=(\hat f_1,\dots,\hat f_n)$ satisfies
$$
\sum_{i=1}^m\hat f_i\left(t_i-t_{i-1}\right)=1\qquad\text{and}\qquad  \hat f_{i_0}=a.
$$

Using the equality part of the Fenchel conditions for this maximization problem, the solution has to satisfy
\begin{align*}
\left\langle\nabla\f_{\hat\l,\hat\m}(\hat f),\hat f\right\rangle&=\frac1n\sum_{i=1}^m w_i-\hat\l\sum_{i=1}^m\left(t_i-t_{i-1}\right)\hat f_i+\hat\m\hat f_{i_0}\\
&=1-\hat\l\sum_{i=1}^m\left(t_i-t_{i-1}\right)\hat f_i+\hat\m\hat f_{i_0}=1-\hat\l+\hat\m\,a=0.
\end{align*}
This yields the following relation between the two Lagrange multipliers:
\begin{equation}
\label{eq_lambda2}
\hat\m=\frac{\hat\l-1}{a}\,.
\end{equation}

The generators of the cone $C_m$ are of the form
$$
g_1=(1,0,0,\dots,0,0),\,g_2=(1,1,0,\dots,0,0),\dots, g_m=(1,1,1,\dots,1,1).
$$
The inequality part of the Fenchel conditions can therefore be written as
\begin{align*}
\left\langle\nabla\f_{\hat\l,\hat\m}(\hat f),g_j\right\rangle&=\sum_{i=1}^j \left\{\frac{w_i}{nf_i}-\hat\l \left(t_i-t_{i-1}\right)\right\}+\hat\m1_{\{j\ge i_0\}}\\
&=\sum_{i=1}^j \left\{\frac{w_i}{nf_i}-\hat\l \left(t_i-t_{i-1}\right)
+\hat\m1_{\{i=i_0\}}\right\}\le0,\qquad j=1,\dots,m.
\end{align*}
Using that $\hat f_m>0$, these conditions are equivalent to:
\begin{align*}
&\sum_{i=1}^j \left\{\frac{w_i}{n}-\hat\l \left(t_i-t_{i-1}\right)\hat f_i+\hat\m1_{\{i=i_0\}}a\right\}\le0,\qquad j=1,\dots,m,
\end{align*}
which we obtain by multiplying the $i$-th component of the inner product with $\hat f_i$.

We now consider the equation:
\begin{equation}
\label{lambda1_eq}
g(\l,\m,a)=a
\end{equation}
where
$$
g(\l,\m,a)=\min_{1\le i\le i_0}\max_{i_0\le j\le m}\frac{\sum_{k=i}^j w_k/n+\m a}{\l\left(t_j-t_{i-1}\right)}=\frac1{\l}\min_{1\le i\le i_0}\max_{i_0\le j\le m}\frac{\sum_{k=i}^j w_k/n+\m a}{\left(t_j-t_{i-1}\right)}\,.
$$
Note that $g(\l,\m,a)$ is the left hand slope of the least concave majorant of the cusum diagram with points $(0,0)$ and
$$
\left(\l t_j,\sum_{i=1}^j \left\{\frac{w_i}{n}+\m a1_{\{i=i_0\}}\right\}\right),\qquad j=1,\dots,m,
$$
evaluated at $\l t_{i_0}$.
So $g(\l,\m,a)$ should be equal to the value of the restricted MLE at $t_0$ and hence should be equal to $a$.

On the other hand, using the identity $\l=1+a\m$, (\ref{lambda1_eq}) turns into
\begin{align*}
\m=\frac1{a^2}\min_{1\le i\le i_0}\max_{i_0\le j\le m}\frac{\sum_{k=i}^j w_k/n+\m a}{\left(t_j-t_{i-1}\right)}-\frac1a.
\end{align*}
Multiplying by $a^2$ yields (\ref{equation_mu2}).
\end{proof}

The cusum diagram for the restricted MLE is shown in Figure \ref{fig:cusum_monotone+MLE} for a sample of size $n=1000$ from a truncated exponential distribution on $[0,2]$, where we subtract the the line connecting $(0,0)$ and $(\hat{\l}t_m,1+a\hat{\mu})$ for clearer visibility of the difference between the least concave majorant and the values of the cusum diagram. We took $i_0=700$, which gave $t_{i_0}=0.909047$ and a value  $\hat f_n(t_{i_0})=0.519022$ for the unrestricted MLE at $t_{i_0}$. The restricted MLE was specified to have the value $0.519022+0.1=0.619022$ at $t_{i_0}$. The computation of the restricted MLE gave $\hat\m=0.064020$ and $t_{i_0}$ was transformed into the point $0.945073$ on the axis of the cumulative weights by multiplying by $1+a\hat\m$.

The lifting of the cusum diagram at $(1+a\hat\m)t_{i_0}$ is clearly visible in part (a) of Figure \ref{fig:cusum_monotone+MLE}. Part (b) of this figure shows that the unrestricted MLE is {\it globally} changed over the whole interval instead of the only local change of the MLE in the current status model. Nevertheless, the (universal) limit distribution of the log likelihood ratio statistic is the same as in the current status model, as we show below.

\begin{remark}
\label{remark:points_of_jump}
{\rm
Note that it is clear from the geometric construction that the penalty in the cusum diagram will only locally lead to different {\it locations} of points of jump of the restricted MLE on an interval $D_n$ with respect to the unrestricted MLE. Outside $D_n$ the points of jump will be the same. This correspondence also follows from the minmax characterization of the MLEs. The correspondence of the points of jump outside $D_n$ is also clearly visible in part (b) of Figure \ref{fig:cusum_monotone+MLE}, where the restricted and unrestricted MLE are plotted in the same scale.
}
\end{remark}

\begin{figure}[!ht]
\begin{subfigure}[b]{0.4\textwidth}
\includegraphics[width=\textwidth]{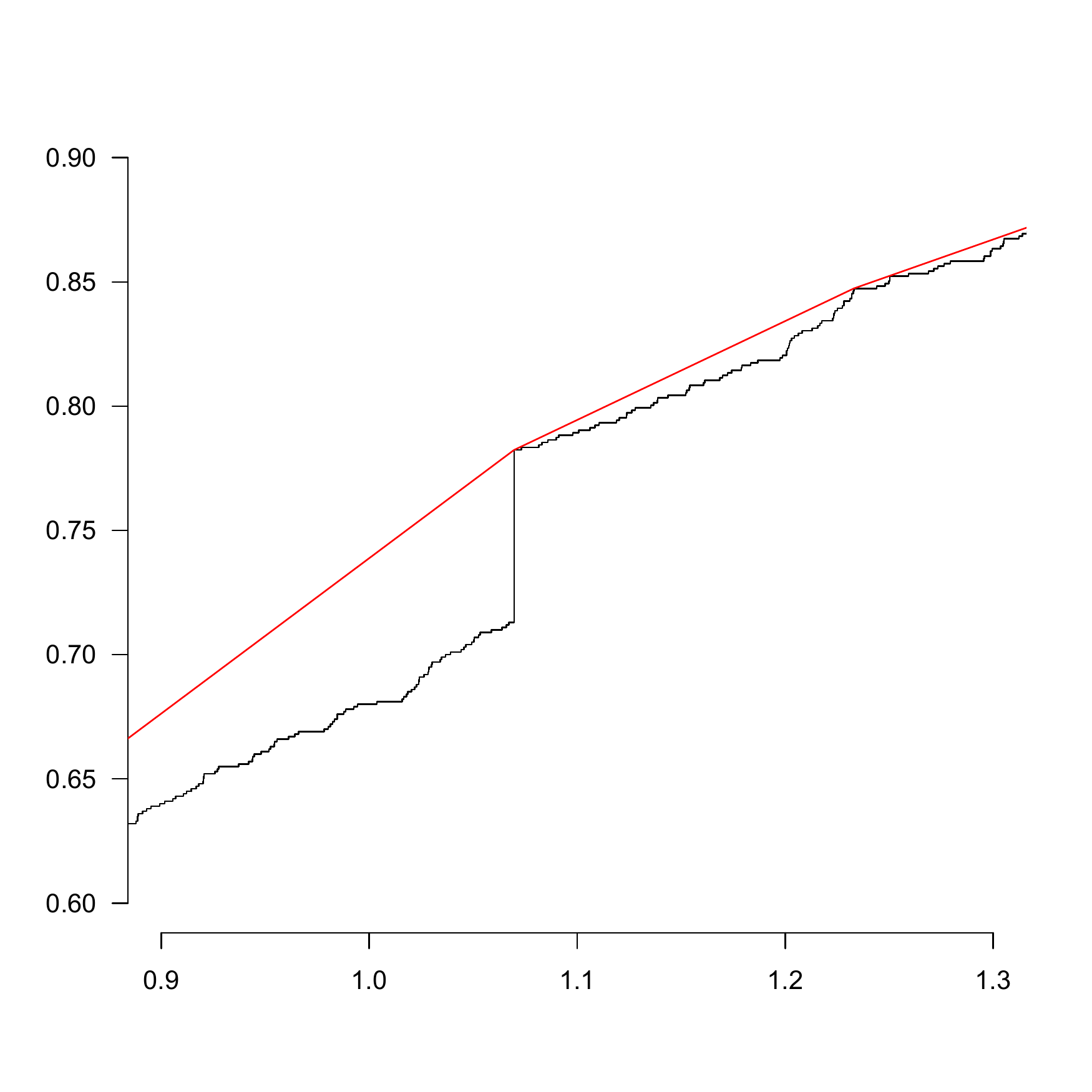}
\caption{}
\label{fig:cusum_monotone}
\end{subfigure}
\hspace{1cm}
\begin{subfigure}[b]{0.4\textwidth}
\includegraphics[width=\textwidth]{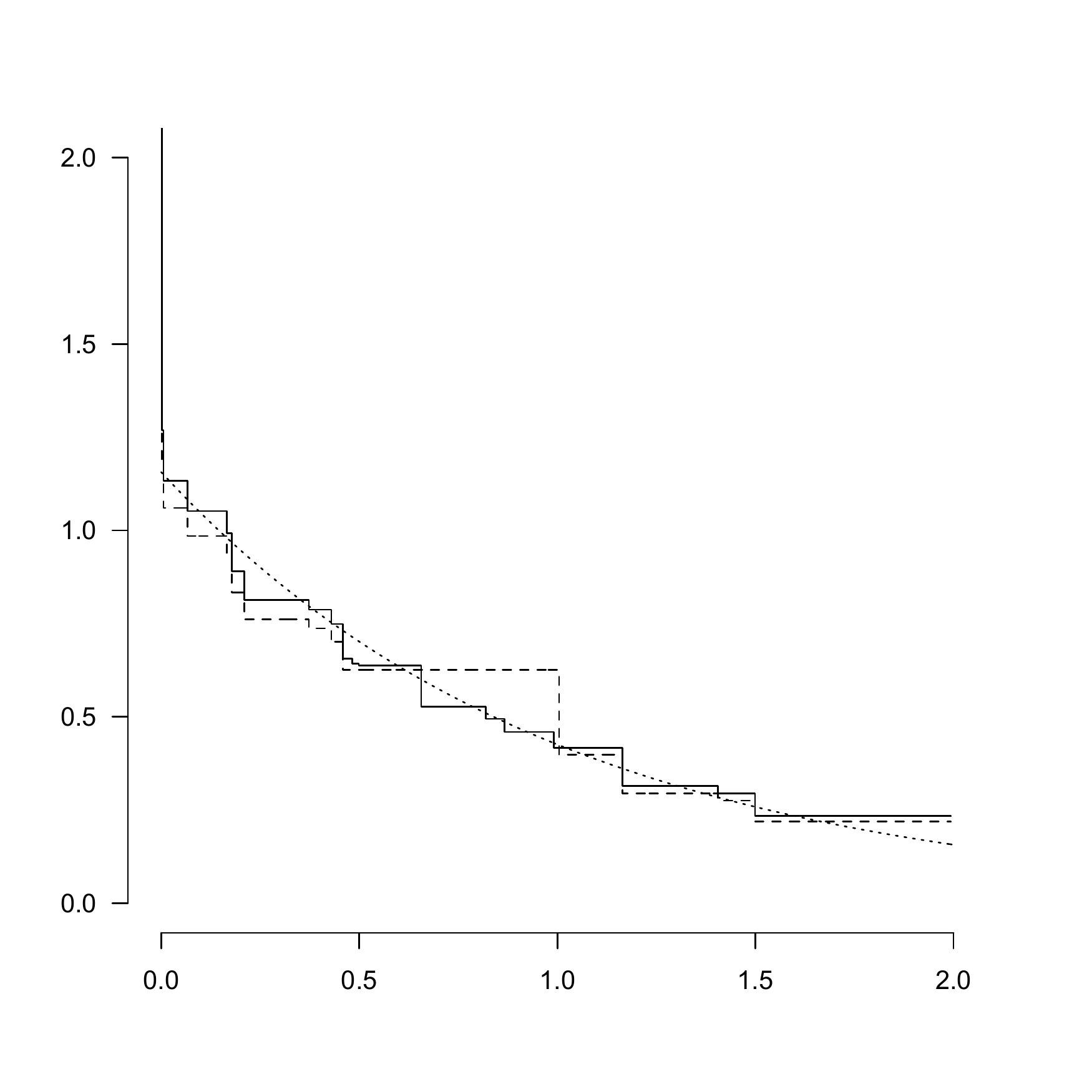}
\caption{}
\label{fig:restricted_monotone}
\end{subfigure}
\caption{Cusum diagram and MLEs for a sample of size $n=1000$ from a truncated exponential distribution with density $f_0$ on $[0,2]$.
We restrict $\hat f^{(0)}$ to have value $a=f_0(1)+0.2$ at $t_0=1$, where $f_0(1)= 0.425459$. (a): cusum diagram with added penalty for the restricted MLE between 0.9 and 1.3. The penalty is added at the location $1.069658=(1+\hat\m a)t_{i_0}$ on the $x$-axis, where $\hat\m=0.10932$ and $t_{i_0}=1.001199$. (b): the restricted MLE (dashed) and the unrestricted MLE (solid).}
\label{fig:cusum_monotone+MLE}
\end{figure}

The proof of Theorem \ref{th:LR_monotone_dens} below will use the following lemma, which is similar to Lemma \ref{lemma:order_mu_CS}.
\begin{lemma}
\label{lemma:order_mu_monotone}
Under the conditions of Theorem \ref{th:LR_monotone_dens} we have, if $a=f_0(t_0)$,
$$
\hat\m_n=O_p\left(n^{-2/3}\right).
$$
\end{lemma}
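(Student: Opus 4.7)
The plan is to imitate the proof of Lemma \ref{lemma:order_mu_CS}, modified by the fact that the right-hand side of (\ref{equation_mu2}) is $a(1+\mu a)$ rather than $a$. Introduce
$$
\psi(\mu)=\min_{1\le i\le i_0}\max_{i_0\le j\le m}\frac{\sum_{k=i}^j w_k/n+\mu a}{t_j-t_{i-1}}-a(1+\mu a),
$$
so $\hat\mu_n$ is the root of $\psi$. Each inner term is affine in $\mu$ with slope $a/(t_j-t_{i-1})$, and for local indices (with $t_j-t_{i-1}$ of order $n^{-1/3}$) this exceeds $a^2$; hence $\psi$ is continuous and strictly increasing on the relevant window of $\mu$. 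By the cube-root asymptotics for the Grenander estimator at an interior point of the support,
$$
\psi(0)=\hat f_n(t_{i_0})-a=O_p(n^{-1/3}),
$$
so the issue is to bound how far from $0$ we must move $\mu$ to bring $\psi$ back to $0$.

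I would treat the case $\hat f_n(t_{i_0})>a$ (so $\hat\mu_n<0$); the opposite case is symmetric. Pick indices $k_1\le i_0\le i_1$ attaining the unrestricted min-max; the classical jump-point estimate (see Remark \ref{remark:kimpol_arguments} and Lemma~5.4 of \cite{GrWe:92}) gives $t_{i_1}-t_{k_1-1}=O_p(n^{-1/3})$. Fixing $i=k_1$, the upper envelope
$$
h(\mu)=\max_{j\ge i_0}\frac{\sum_{k=k_1}^j w_k/n+\mu a}{t_j-t_{k_1-1}}\ge\psi(\mu)+a(1+\mu a)
$$
is continuous, piecewise-linear in $\mu$ with local slopes exceeding $a^2$, and equals $\hat f_n(t_{i_0})>a$ at $\mu=0$; hence there is a unique $\mu_*<0$ with $h(\mu_*)=a(1+\mu_* a)$. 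Denoting the corresponding maximizer by $j_*$ and $\delta=t_{j_*}-t_{k_1-1}$, a short algebraic rearrangement (using $a=f_0(t_0)$) rewrites this as the density analogue of (\ref{mu_relation}):
$$
\mu_*\,a\,(1-a\delta)=\int_{t_{k_1-1}}^{t_{j_*}}\bigl\{f_0(t_0)-f_0(t)\bigr\}\,dt-(\hat F_n-F_0)(t_{j_*})+(\hat F_n-F_0)(t_{k_1-1}).
$$

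The main step, and the one I expect to be the most delicate, is a Kim-Pollard parabolic-drift argument showing $\delta=O_p(n^{-1/3})$. A Taylor expansion around $t_0$ together with $f_0'(t_0)<0$ turns the deterministic integral into $\tfrac12|f_0'(t_0)|\bigl[(t_{j_*}-t_0)^2-(t_0-t_{k_1-1})^2\bigr]$ up to lower order; the empirical-process increments on an interval of length $\delta$ are $O_p(\sqrt{\delta/n})$, while the LHS is negative since $\mu_*<0$ and $1-a\delta>0$. Because $t_0-t_{k_1-1}=O_p(n^{-1/3})$ already, any excursion $t_{j_*}-t_0\gg n^{-1/3}$ would produce a positive deterministic term dominating both the LHS and the empirical fluctuations, contradicting the identity. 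Hence $\delta=O_p(n^{-1/3})$, and substituting back into the identity gives $|\mu_*|=O_p(n^{-2/3})$.

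Finally, $\psi(\mu_*)\le h(\mu_*)-a(1+\mu_* a)=0=\psi(\hat\mu_n)$ combined with monotonicity of $\psi$ yields $\hat\mu_n\ge\mu_*$, hence $|\hat\mu_n|\le|\mu_*|=O_p(n^{-2/3})$. The case $\hat f_n(t_{i_0})<a$ is handled symmetrically by fixing $j=i_1$ and minimising over $i$ to produce a lower envelope bounding $\psi$ from below.
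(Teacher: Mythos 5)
Your proposal is correct and follows essentially the same route as the paper's proof: define the min--max function of $\mu$, build a comparison multiplier $\mu_*$ by freezing one index at the unrestricted optimum and letting the other vary, rewrite the resulting equation as the integral identity analogous to (\ref{mu_relation2}), apply the Kim--Pollard parabolic-drift argument to localize the varying jump point within $O_p(n^{-1/3})$ of $t_0$, and finish by sandwiching $\hat\mu_n$ between $0$ and $\mu_*$ via monotonicity of the min--max function. The only difference is that you write out the mirror-image case ($\hat f_n(t_{i_0})>a$, $\hat\mu_n<0$, fixing the left index $k_1$ and maximizing over $j$) while the paper details the case $\hat\mu_n>0$ (fixing the right index $i_1$ and minimizing over $k$); both treatments dismiss the other case as symmetric, so this is immaterial.
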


\begin{proof}
Suppose $t_0\in(t_{i_0-1},t_{i_0})$. Consider the function
$$
\f:\m\mapsto \min_{k\le i_0}\max_{i\ge i_0}\frac{\sum_{j=k}^i w_j/n+\m\,a}{(1+\m a)\bigl(t_i-t_{k-1}\bigr)},\qquad a=f_0(t_0).
$$
By the least concave majorant characterization of the unrestricted MLE $\hat f_n$, we have
$$
\f(0)=\min_{k\le i_0}\max_{i\ge i_0}\frac{\sum_{j=k}^i w_j}{n\bigl(t_i-t_{k-1}\bigr)}=\hat f_n(t_0).
$$

Let $k_1\le i_0$ and $i_1\ge i_0$ be the indices, satisfying
$$
\hat f_n(t_{i_0})=\frac{\sum_{j=k_1}^{i_1} w_j}{n\bigl(t_{i_1}-t_{k_1-1}\bigr)}=\min_{k\le i_0}\max_{i\ge i_0}\frac{\sum_{j=k}^i w_j}{n\bigl(t_i-t_{k-1}\bigr)}\,.
$$
Note that, by the definition of $\hat f_n$, $t_{i_1}$ is the first point of jump (in the sense that $\hat f_n(t)<\hat f_n(t_{i_1})$ if $t>t_{i_1}$) to the right of $t_{i_0}$, and $t_{k_1-1}$ the last point of jump (similarly, $\hat f_n(t)<\hat f_n(t_{k_1-1})$ if $t>t_{k_1-1}$) before $t_{i_0}$ .

Suppose $a>\hat f_n(t_{i_0})$ and let, for $\m>0$, $k_{\m}\le i_0$ be the index such that
$$
\frac{\sum_{j=k_{\m}}^{i_1}w_j/n+\m a}{t_{i_1}-t_{k_{\m}-1}}=\min_{k\le i_0}\frac{\sum_{j=k}^{i_1}w_j/n+\m a}{t_{i_1}-t_{k-1}}
$$
Then, if $a\bigl(t_{i_1}-t_{k_{\m}-1}\bigr)\ne1$, there exists a $\m>0$ such that
$$
\frac{\sum_{j=k_{\m}}^{i_1}w_j+n\m a}{n\bigl(t_{i_1}-t_{k_{\m}-1}\bigr)}=\min_{k\le i_0}\frac{\sum_{j=k}^{i_1}w_j+n\m a}{n\bigl(t_{i_1}-t_{k-1}\bigr)}=a(1+\m a),
$$
and this $\m$ is given by:
$$
\mu=\frac{a\bigl(t_{i_1}-t_{k_{\m}-1}\bigr)-\sum_{j=k}^{i_1} w_j/n}{a\bigl\{1-a\bigl(t_{i_1}-t_{k_{\m}-1}\bigr)\bigr\}}\,.
$$
Using $a=f_0(t_0)$, this can be written in the form:
\begin{equation}
\label{mu_relation2}
0<\m f_0(t_0)=\frac{\int_{t\in(t_{k_{\m}-1},t_{i_1}]}f_0(t_0)\,dt-\int_{t\in(t_{k_{\m}-1},\t_{i_1}]}\,d\F_n(t)}{1-\int_{t\in(t_{k_{\m}-1},\t_{i_1}]}f_0(t_0)\,dt}\,,
\end{equation}
where $\F_n$ is defined by
$$
\F_n(t)=n^{-1}\sum_{i:t_i\le t}w_i.
$$

As noted above, $t_{i_1+1}$ is the first point of jump of $\hat f_n$ to the right of $t_{i_0}$.
Let $\t_+=t_{i_1}$.
As in the proof of Lemma \ref{lemma:order_mu_CS}, we have: $\t_+-t_{i_0}=O_p(n^{-1/3})$. To see this, note that, by (\ref{mu_relation2}), we must have:
$$
\int_{t\in(t_{k_{\m}-1},\t_+]}f_0(t_0)\,dt-\int_{t\in(t_{k_{\m}-1},\t_+]}\,d\F_n(t)>0,
$$
and
\begin{align*}
&\int_{t\in(t_{k_{\m}-1},\t_+]}f_0(t_0)\,dt-\int_{t\in(t_{k_{\m}-1},\t_+]}\,d\F_n(t)\\
&=\int_{t\in(t_{k_{\m}-1},\t_+]}\{f_0(t_0)-f_0(t)\}\,dt-\int_{t\in(t_{k_{\m}-1},\t_+]}\,d\bigl(\F_n-F_0\bigr)(t),
\end{align*}
where the first term on the right gives a negative parabolic drift which cannot be compensated by the second random term outside a neighborhood of order $O_p(n^{-1/3})$ of $t_0$.

By the same type of argument, we can choose for each $\e>0$ an $M>0$ such that
$$
\P\left\{\frac{\int_{u\in(t,\t_+]}f_0(t_{i_0})\,du-\int_{u\in(t,\t_+]}\,d\F_n(u)}{1-
\int_{u\in(t,\t_+]}f_0(t_{i_0})\,du}<0\right\}>1-\e,
$$
if $t<t_{i_0}-Mn^{-1/3}$. But since we must have
$$
\int_{t\in(t_{k_{\m}-1},\t_+]}f_0(t_{i_0})\,dt-\int_{t\in(t_{k_{\m}-1},\t_+]}\,d\F_n(t)>0,
$$
by the positivity of $\m$ and relation (\ref{mu_relation2}), it now follows that $t_{i_0}-t_{k_{\m}-1}=O_p(n^{-1/3})$ and therefore
$$
\m f_0(t_{i_0})=\frac{\int_{t\in(t_{k_{\m}-1},\t_+]}f_0(t_{i_0})\,dt-\int_{t\in(t_{k_{\m}-1},\t_+]}\,d\F_n(t)}{1-
\int_{t\in(t_{k_{\m}-1},\t_+]}f_0(t_{i_0})\,dt}=O_p\left(n^{-2/3}\right).
$$
Hence $\m=O_p\left(n^{-2/3}\right)$
and
$$
\f(\m)=\min_{k\le i_0}\max_{i\ge i_0}\frac{\sum_{j=k}^i w_j/n+a\m}{t_i-t_{k-1}}
\ge\min_{k\le i_0}\frac{\sum_{j=k}^{i_1} w_j/n+a\m}{t_i-t_{k-1}}=a(1+a\m).
$$
As in the proof of Lemma \ref{lemma:order_mu_CS} we can now conclude
$$
0\le\hat\m_n\le\m=O_p\left(n^{-2/3}\right).
$$
The case $a<\hat f_n(t_{i_0})$ can be treated in a similar way.
\end{proof}

We can now prove the following result. The proof is given in Section \ref{section:appendix2}.

\begin{theorem}
\label{th:LR_monotone_dens}
Let $f_0$ be a decreasing density, which is continuous and has a continuous strictly negative derivative $f_0'$ in a neighborhood of $t_0$. Let $\hat f_n$ be the unrestricted MLE and let $\hat f_n^{(0)}$ be the MLE under the restriction that $\hat f_n^{(0)}(t_0)=f_0(t_0)$. Moreover, let the log likelihood ratio statistic $2\log {\ell_n}$ be defined by
$$
2\log{\ell_n}=2\sum_{i=1}^n\log\frac{\hat f_n(T_i)}{\hat f_n^{(0)}(T_i)}\,.
$$
Then
$$
2\log{\ell_n}\stackrel{{\cal D}}\longrightarrow \mathbb D,
$$
where $\mathbb D$ is the universal limit distribution as given in \cite{mouli_jon:01}.
\end{theorem}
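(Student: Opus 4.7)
The plan is to follow the proof of Theorem \ref{th:LR_current_status}, adapting the argument to account for the global integrability constraint that distinguishes the monotone density setting from the current status setting. Writing $r_n(t)=\{\hat f_n(t)-\hat f_n^{(0)}(t)\}/\hat f_n^{(0)}(t)$ and using the expansion $2\log(1+r_n)=2r_n-r_n^2+O(r_n^3)$, one obtains
$$
2\log\ell_n=2n\int r_n\,d\F_n-n\int r_n^2\,d\F_n+\text{remainder},
$$
where $\F_n$ denotes the empirical distribution of $T_1,\dots,T_n$. By Remark \ref{remark:points_of_jump}, the difference $\hat f_n-\hat f_n^{(0)}$ is supported on a set $D_n$ contained in a neighborhood of $t_0$ of radius $O_p(n^{-1/3})$, and on $D_n$ both MLEs stay within $O_p(n^{-1/3})$ of $f_0(t_0)>0$. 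Hence $\sup_{t\in D_n}|r_n(t)|=O_p(n^{-1/3})$ and $\F_n(D_n)=O_p(n^{-1/3})$, so the cubic remainder is $O_p(n\cdot n^{-1}\cdot n^{-1/3})=o_p(1)$.

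Next I would show that the linear term is negligible. The Fenchel equality (\ref{fenchel_eq_monotone}) for the unrestricted MLE, together with the analogous identity for $\hat f_n^{(0)}$ coming from Lemma \ref{lemma:one_lambda2} (which carries an extra Lagrange contribution proportional to $\hat\m_n$ at the site $t_{i_0}$), implies after a block-wise summation over the constancy blocks of $\hat f_n^{(0)}$ that $2n\int r_n\,d\F_n$ collapses to an error proportional to $n\hat\m_n$ times quantities that are themselves $O_p(n^{-1/3})$. Since $\hat\m_n=O_p(n^{-2/3})$ by Lemma \ref{lemma:order_mu_monotone}, this residual is $o_p(1)$, and the quadratic term carries the entire asymptotic mass.

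For the quadratic term, on $D_n$ one may replace $d\F_n$ by $f_0(t_0)\,dt$ and $\hat f_n^{(0)}$ in the denominator by $f_0(t_0)$, getting
$$
n\int r_n^2\,d\F_n=\frac{n}{f_0(t_0)}\int_{D_n}\bigl\{\hat f_n(t)-\hat f_n^{(0)}(t)\bigr\}^2\,dt+o_p(1).
$$
Substituting $t=t_0+n^{-1/3}x$ and invoking the invariance principle for the cusum diagrams (\ref{cusum_dens}) and (\ref{cusum_mu2}), the locally rescaled processes $n^{1/3}\{\hat f_n(t_0+n^{-1/3}x)-f_0(t_0)\}$ and $n^{1/3}\{\hat f_n^{(0)}(t_0+n^{-1/3}x)-f_0(t_0)\}$ converge jointly to the left slopes of, respectively, the unconstrained LCM and the LCM constrained to vanish at $x=0$, of a two-sided Brownian motion plus a parabolic drift with coefficient proportional to $f_0'(t_0)$. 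A canonical rescaling absorbs the constants $f_0(t_0)$ and $f_0'(t_0)$, so the quadratic term converges to the standard functional of the unconstrained and constrained LCMs of $W(x)-x^2$, which is the universal distribution $\mathbb D$ of \cite{mouli_jon:01}.

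The main obstacle is the global abscissa rescaling $1+\hat\m_n a$ in the restricted cusum diagram (\ref{cusum_mu2}). Unlike in the current status case, the restricted MLE is not merely a local perturbation of the unrestricted MLE: the whole cusum diagram is horizontally rescaled, which in principle affects every block. Lemma \ref{lemma:order_mu_monotone} is the crucial input guaranteeing that this global rescaling is $1+O_p(n^{-2/3})$, so that over the window $D_n$ of width $O_p(n^{-1/3})$ the resulting distortion is $o_p(n^{-1/3})$, negligible at the resolution $n^{-1/3}$ at which the Brownian limits live. Hence the local limit process for $\hat f_n^{(0)}$ is indeed the constrained LCM driven by the same Brownian motion and parabolic drift as for $\hat f_n$, and the same universal $\mathbb D$ as in the current status model emerges.
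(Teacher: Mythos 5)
Your overall architecture (localize to a window $D_n$ of width $O_p(n^{-1/3})$, Taylor-expand the logarithm around $f_0(t_0)$, rescale by $n^{1/3}$, Brownian scaling) matches the paper's, and you correctly single out Lemma \ref{lemma:order_mu_monotone} as the key input. But two intermediate claims are false. First, the difference $\hat f_n-\hat f_n^{(0)}$ is \emph{not} supported on $D_n$: Remark \ref{remark:points_of_jump} only says the \emph{jump points} agree outside $D_n$. Because the restricted cusum diagram (\ref{cusum_mu2}) has its abscissa rescaled by $1+\hat\mu a$, on $D_n^c$ one has $\hat f_n^{(0)}(t)=\hat f_n(t)/(1+\hat\mu_n a)$ — the ``global change'' visible in Figure \ref{fig:cusum_monotone+MLE}(b). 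Hence $r_n(t)=\hat\mu_n a$ on $D_n^c$ and the contribution of $D_n^c$ to your linear term is $2n\hat\mu_n a\,\F_n(D_n^c)=O_p(n^{1/3})$, which diverges rather than vanishes. The paper disposes of it by invoking the normalization $\int\hat f_n^{(0)}(x)\,dx=1$, which converts $2n\hat\mu_n a\int_{D_n^c}d\F_n$ into $2n\int_{D_n}\{\hat f_n^{(0)}(t)-\hat f_n(t)\}\,dt+O_p(n^{-1/3})$, i.e.\ back into a local integral over $D_n$. This use of the integrability constraint is precisely the new ingredient relative to the current status proof, and it is missing from your argument; your closing paragraph addresses the abscissa rescaling only for the local limit process on $D_n$, not for its $O_p(n^{1/3})$ effect on the likelihood ratio over $D_n^c$.

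Second, the linear term $2n\int r_n\,d\F_n$ cannot be $o_p(1)$. Since $\hat f_n$ is the unrestricted maximizer, $2\log\ell_n\ge0$; if the linear term vanished, your expansion would give $2\log\ell_n=-n\int r_n^2\,d\F_n+o_p(1)\le o_p(1)$, forcing a degenerate limit instead of $\mathbb D$. What the Fenchel equality conditions actually yield (after the $D_n^c$ correction above) is that each linear term equals \emph{twice} the corresponding quadratic functional, e.g.
$$
2n\int_{D_n}\frac{\hat f_n(t)-f_0(t_0)}{f_0(t_0)}\,d\F_n(t)-2n\int_{D_n}\{\hat f_n(t)-f_0(t_0)\}\,dt
=2n\int_{D_n}\frac{\{\hat f_n(t)-f_0(t_0)\}^2}{f_0(t_0)}\,dt,
$$
so that the statistic collapses to $\frac{n}{f_0(t_0)}\int_{D_n}\bigl[\{\hat f_n(t)-f_0(t_0)\}^2-\{\hat f_n^{(0)}(t)-f_0(t_0)\}^2\bigr]\,dt+O_p(n^{-1/3})$, a \emph{difference} of quadratic functionals with positive sign. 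Your final rescaling and Brownian-scaling step is fine once this identity is established, but as written the middle of your proof does not produce the correct functional.
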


\begin{remark}
{\rm The condition that $f_0$ has a continuous strictly negative derivative $f_0'$ in a neighborhood of $t_0$ corresponds to ``condition A" in \cite{mouli_jon:01} for the current status model, which is the condition that the derivative $f_0$ of $F_0$ is strictly positive at $t_0$ and continuous in a neighborhood of $t_0$. A condition of this type is necessary for getting Brownian motion with parabolic drift in the limit distribution of the MLEs. This fails if we take $f_0$ uniform, in which case we get a different type of asymptotics. See section 3.10 in \cite{piet_geurt:14}.
}
\end{remark}

\begin{center}
\begin{figure}
  \includegraphics[width=6.5cm]{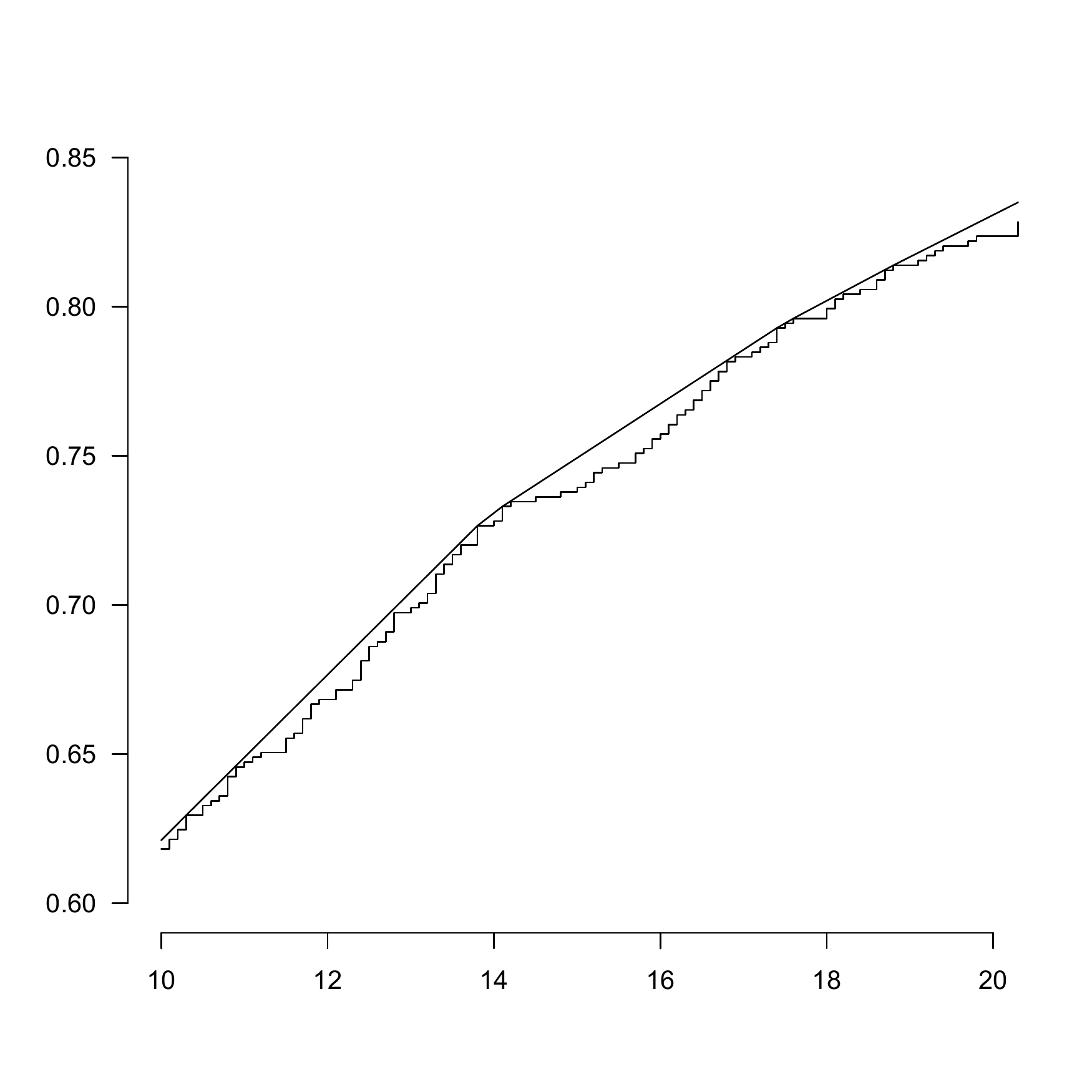}
    \includegraphics[width=6.5cm]{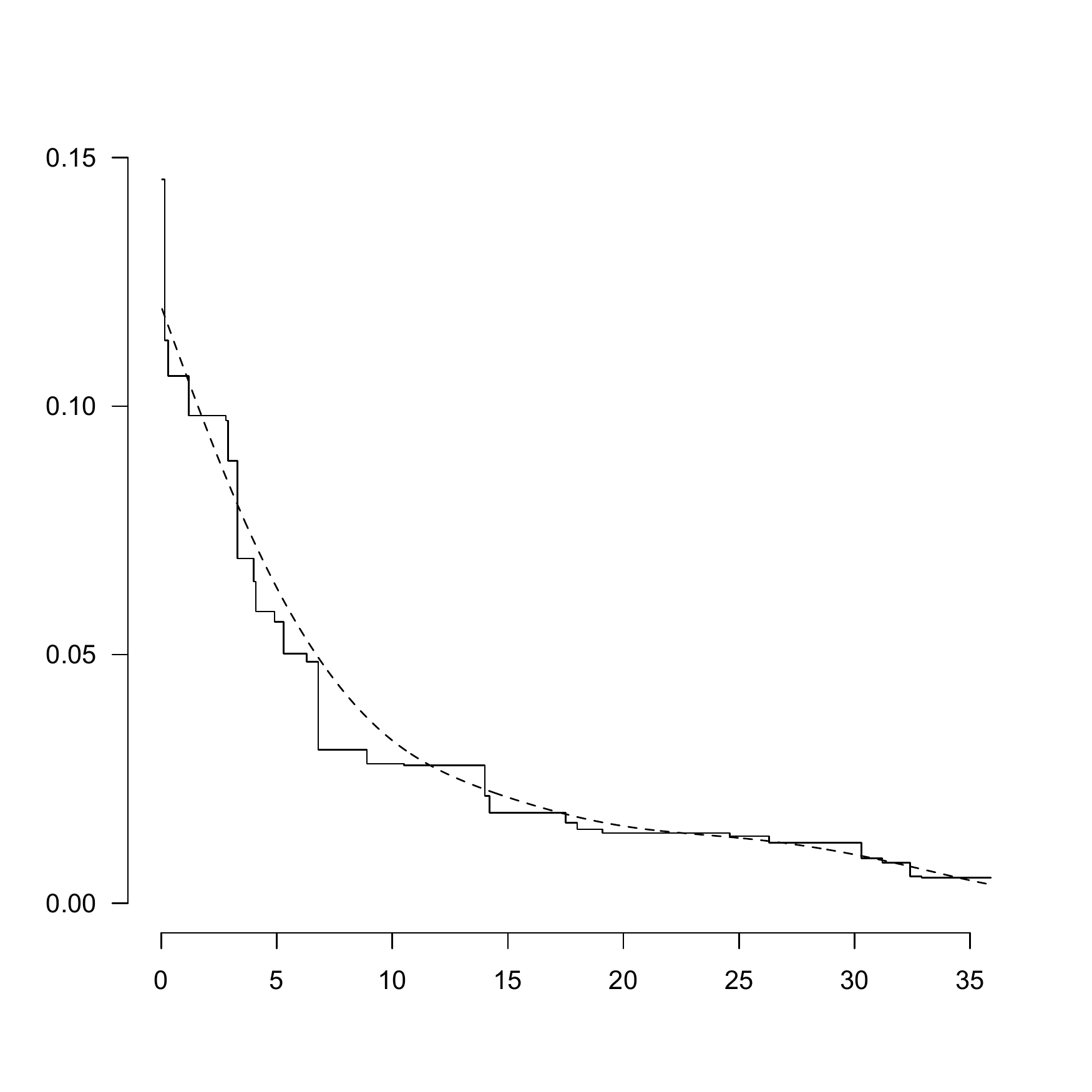}
  \caption{The left panel shows the empirical distribution function and its least concave majorant for the values between $10$ and $20$ months of the 618 current durations $\le$ 36 months. The resulting Grenander estimate (the MLE) of the observation density on the interval $[0,36]$ is shown in the right panel, together with its smoothed version (dashed, the SMLE)}
  \label{fig:curdurecdf}
\end{figure}
\end{center}

\begin{example}
\label{exam:CurDur}
{\rm
Suppose we have a sample $Z_1,\ldots,Z_n$ from the length biased distribution, associated with an unknown distribution function $F$ of interest. This means that the distribution function of $Z_i$ is given by
\begin{equation}
\label{eq:biasedcdf}
\bar{F}(z)=P(Z_i\le z)=\frac1{m_F}\int_0^z x\,dF(x)
\end{equation}
where $m_F=\int_0^{\infty} x\,dF(x)$ is assumed to be nonzero and finite. However, instead of observing the values of $Z_i$ directly, we only observe the data $X_1,\ldots,X_n$ where $X_i$ is a uniform random fraction of $Z_i$. More specifically, we observe
$$
X_i=U_i Z_i,
$$
where $U_1,\ldots,U_n$ is a random sample from the uniform distribution on $[0,1]$, independent of the $Z_i$'s. Now the density of $X_i$ can be seen to be
\begin{equation}
\label{eq:gitoFLP}
g(x)=\frac1{m_F}(1-F(x)),\,\,\,x\ge0,
\end{equation}
see (2.5) in Section 2.2 and Exercise 2.4 in \cite{piet_geurt:14}.
This means that the survival function $1-F(x)$ is given by $g(x)/g(0)$.

Hence, by monotonicity of the initial distribution function $F$ and the fact that $0<m_F<\infty$, it follows that sampling density $g$ is bounded and decreasing on $[0,\infty)$. Moreover, if no additional assumptions are imposed on $F$, any density of this type can be represented by (\ref{eq:gitoFLP}). The density $g$ can be estimated by the Grenander estimator of a decreasing density. See \cite {watson:71} and \cite{vardi:89} for applications of this model.

In  \cite{keidslama:12} a data set of current durations of pregnancy in France is studied. The aim is to estimate the distribution of the time it takes for a woman to become pregnant after having having started unprotected sexual intercourse. For $867$ women the current duration of unprotected intercourse, measured in months, was recorded and this is the basis of part of the research, reported in \cite{keidslama:12}.

Given that the woman in the study is currently trying to become pregnant, the actual recorded data (current duration) can be viewed as uniform random fraction of the true, total duration. In that sense, the model as given in (\ref{eq:gitoFLP}) is not unreasonable. The left panel of Figure \ref{fig:curdurecdf} shows a part of the empirical distribution function of $618$ recorded current durations, kindly provided to us by Niels Keiding, where the data are truncated at $36$ months and are of a similar nature as the data in \cite{keidslama:12}. Based on the least concave majorant, the right panel of Figure \ref{fig:curdurecdf} is computed, showing the resulting MLE of the decreasing density of the observations together with its smoothed version, the smoothed maximum likelihood estimator (SMLE), defined by
\begin{equation}
\label{SMLE_Slama}
\tilde g_{nh}(t)=-\int \IK((t-x)/h)\,d\hat g_n(x),\qquad \IK(x)=\int_x^{\infty} K(u)\,du,
\end{equation}
where $\hat g_n$ is the Grenander estimator (the MLE) and $K$ is a symmetric kernel, for which we took
the triweight kernel
$$
K(u)=\frac{35}{32}\left(1-u^2\right)^31_{[-1,1]}(u),\qquad u\in\R.
$$
The bandwidth $h$ was chosen to be
$$
h=36 n^{-1/5}\approx9.95645,
$$
where $n=618$. Near the boundary points $0$ and $36$ the same boundary correction as in section \ref{section:CI_current_status} was used. For $t\in[h,b-h]$, where $b=36$, the SMLE is asymptotically equivalent to the ordinary kernel density estimator
\begin{equation}
\label{naive_Slama}
\int K_h(t-x)\,d\FF_n(x),\qquad K_h(u)=h^{-1}K(u/h),
\end{equation}
which, however, will in general not be monotone, so not belong to the allowed class.

\begin{figure}[!ht]
\begin{subfigure}[b]{0.4\textwidth}
\includegraphics[width=\textwidth]{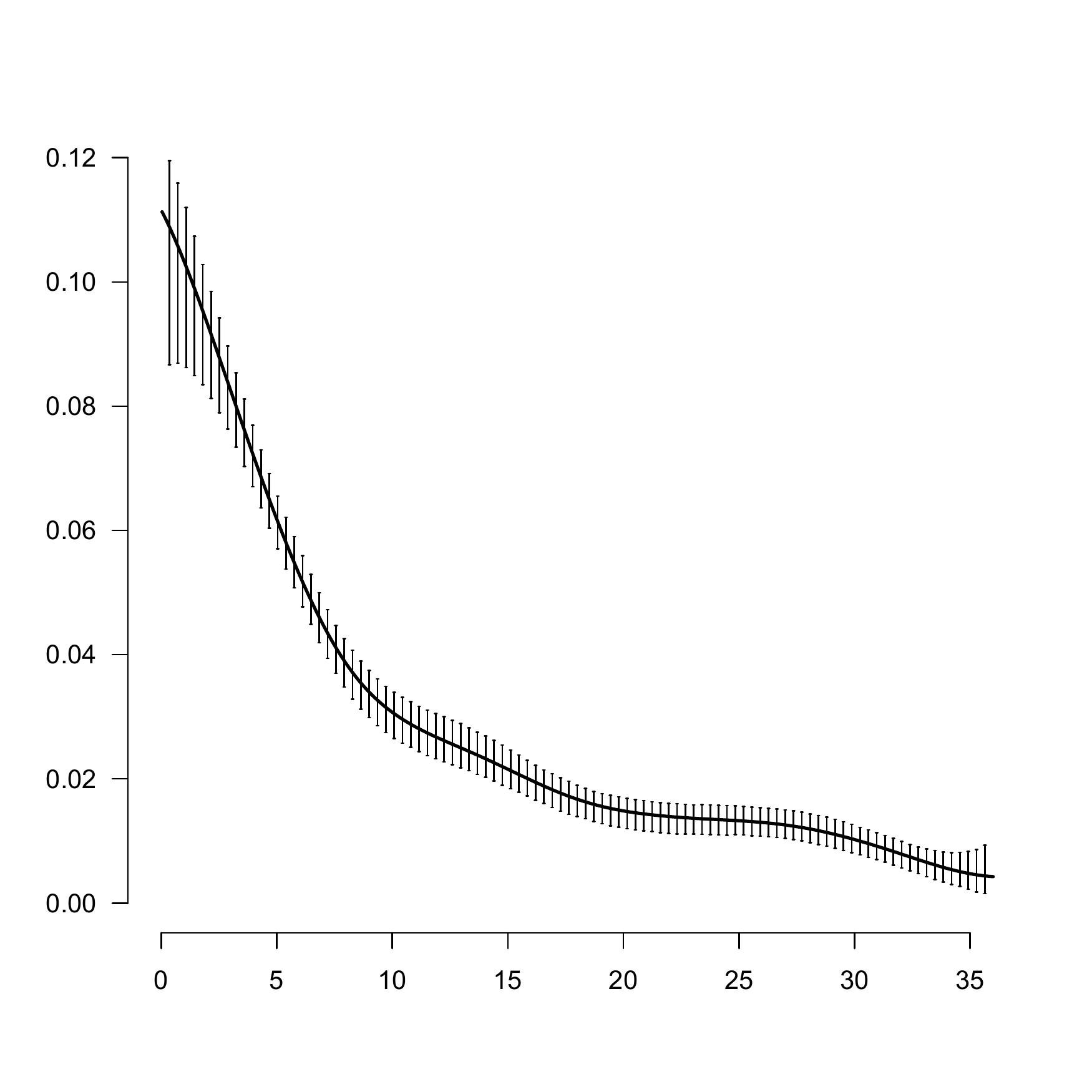}
\caption{}
\label{fig:CI_SMLE_Slama_dens}
\end{subfigure}
\hspace{1cm}
\begin{subfigure}[b]{0.4\textwidth}
\includegraphics[width=\textwidth]{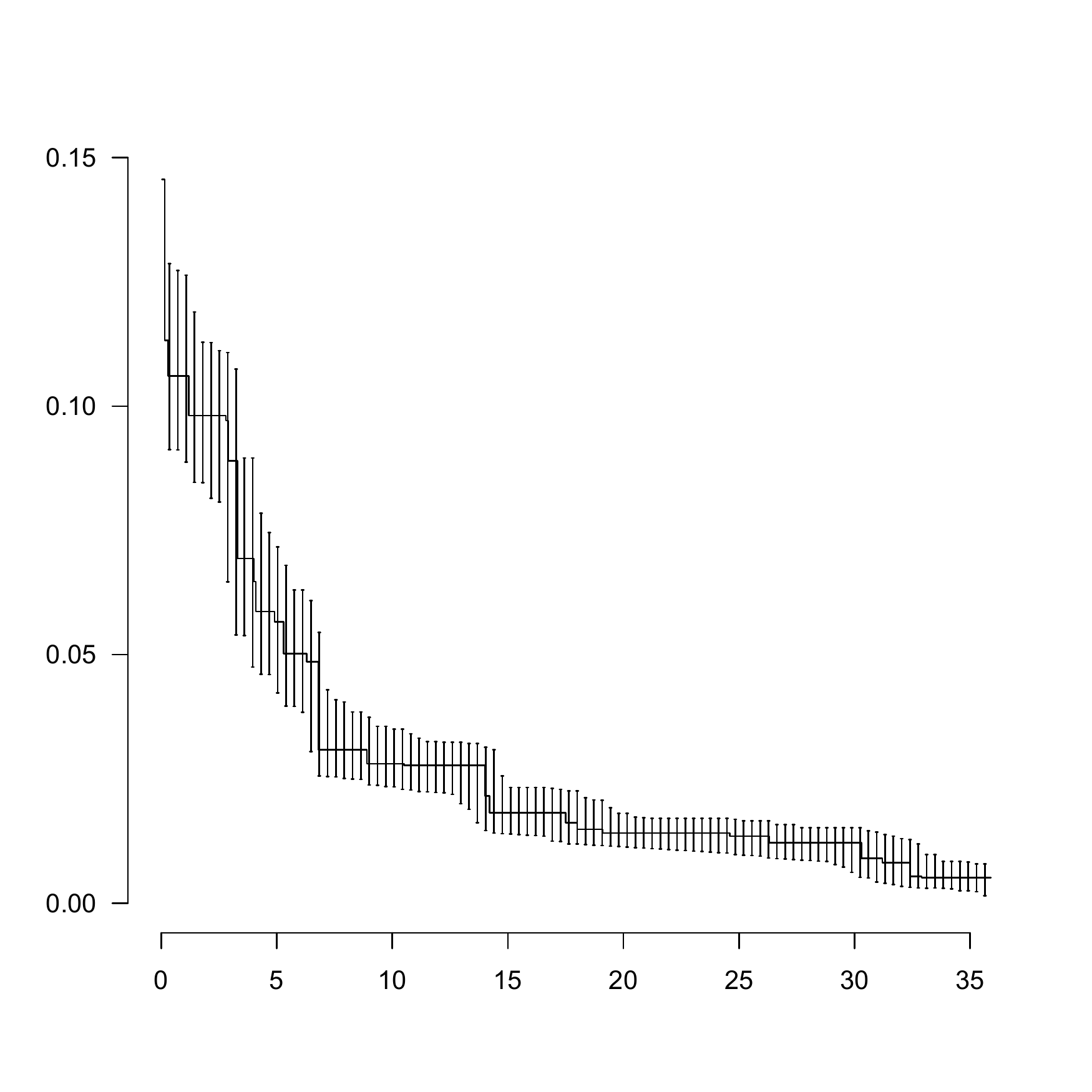}
\caption{}
\label{fig:CI_MLE_Slama_dens}
\end{subfigure}
\caption{$95\%$ confidence intervals, based on the SMLE (part (a)) and MLE (part (b)), respectively, for the data in \cite{keidslama:12} at the points $0.36,0.72,\dots,35.64$. The chosen bandwidth for the SMLE was $36 n^{-1/4}\approx7.2203$.
The time is measured in months.}
\label{fig:Slama_CI_dens}
\end{figure}

\begin{figure}[!ht]
\begin{center}
\includegraphics[scale=0.5]{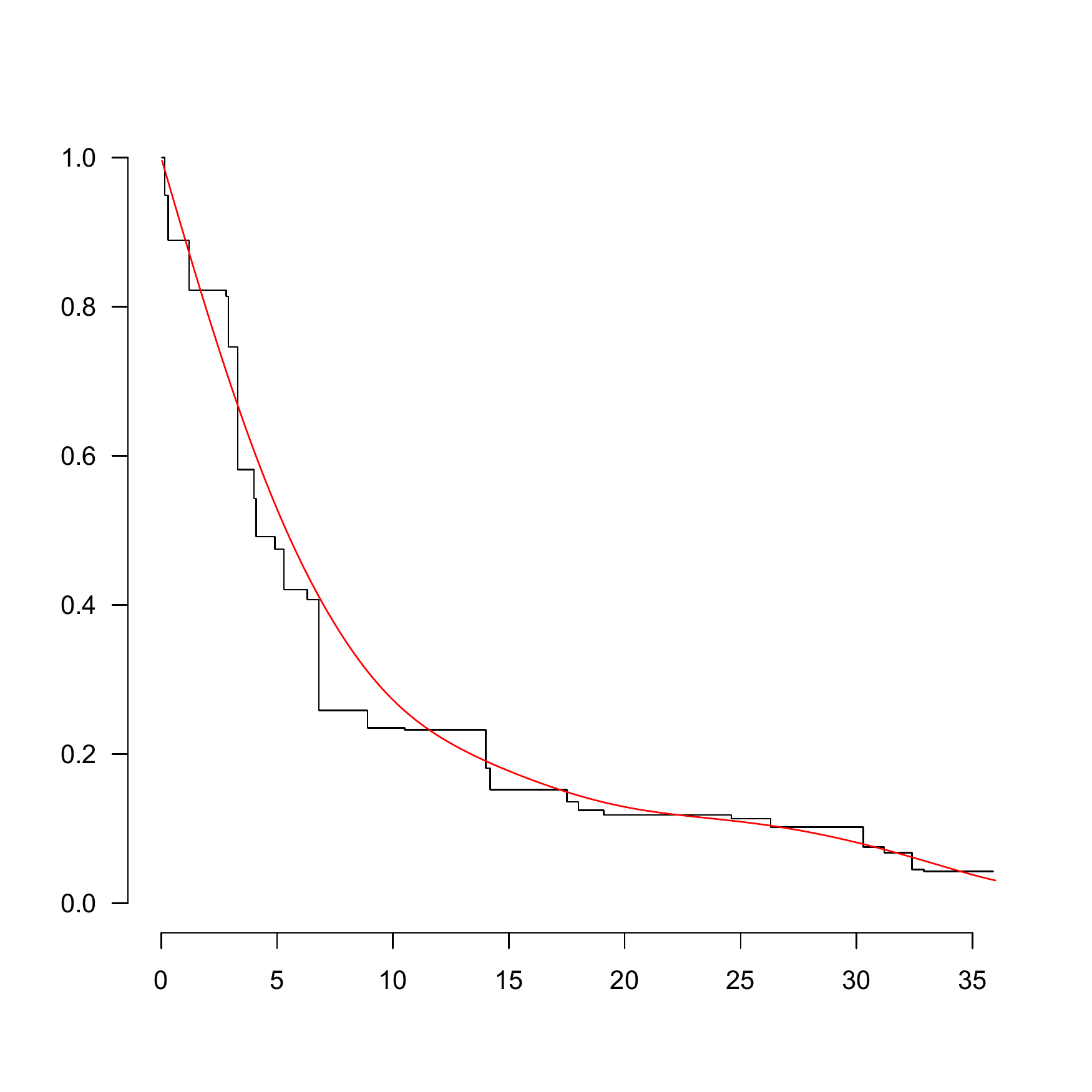}
\end{center}
\caption{Estimates of the survival function, based on the MLE (step function) and SMLE (smooth function), where the MLE is restricted to have the same value at zero as the (consistent) SMLE.}
\label{fig:MLE_restricted_at_zero}
\end{figure}

\begin{figure}[!ht]
\begin{subfigure}[b]{0.4\textwidth}
\includegraphics[width=\textwidth]{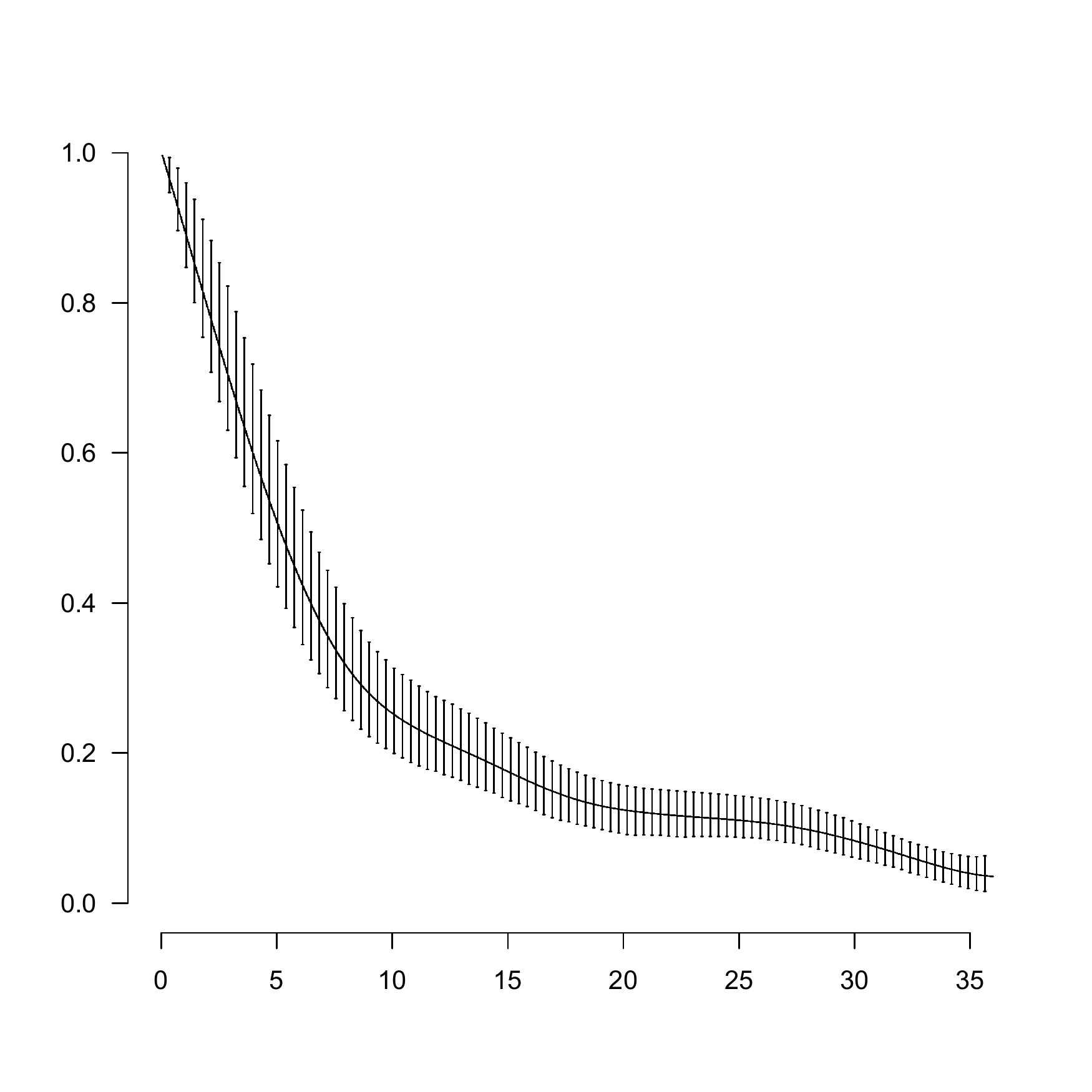}
\caption{}
\label{fig:CI_SMLE_Slama_df}
\end{subfigure}
\hspace{1cm}
\begin{subfigure}[b]{0.4\textwidth}
\includegraphics[width=\textwidth]{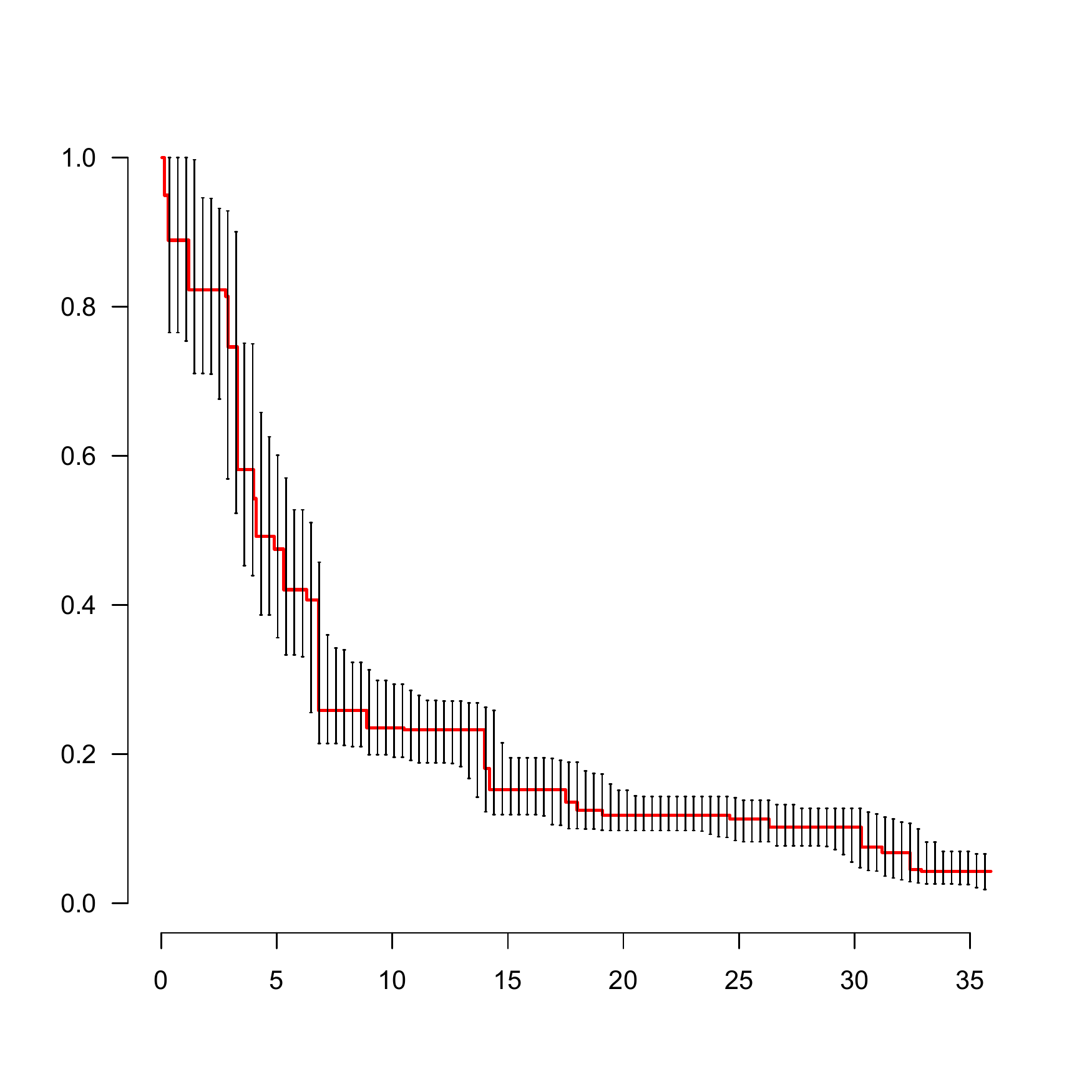}
\caption{}
\label{fig:CI_MLE_Slama_df}
\end{subfigure}
\caption{$95\%$ confidence intervals, based on the SMLE (part (a)) and MLE (part (b)), respectively, for the survival functions in \cite{keidslama:12} at the points $0.36,0.72,\dots,35.64$. The chosen bandwidth for the SMLE was $36 n^{-1/4}\approx7.2203$ and the MLE was restricted to have the same value as the (consistent) SMLE at zero.}
\label{fig:Slama_CI_df}
\end{figure}

The $95\%$ confidence intervals for the density (\ref{eq:gitoFLP}), based on the SMLE and the LR test for the MLE, respectively, are shown in Figure \ref{fig:Slama_CI_dens}.
The survival function for the time until pregnancy or end of the period of unprotected intercourse is given by $g(x)/g(0)$, where $g$ is the density of the observations.
The $95\%$ confidence intervals for the survival function at the $99$ equidistant points $0.36,0.72,\dots,35.64$, are constructed from $1000$ bootstrap samples $T_1^*,\dots,T_n^*$, also of size $n$, drawn from the original sample, and in these samples we computed
\begin{equation}
\label{bootstrap_Slama}
\tilde g_{nh}^*(t)/\tilde g_{nh}^*(0)-\tilde g_{nh}(t)/\tilde g_{nh}(0),
\end{equation}
where $\tilde g_{nh}$ and $\tilde g_{nh}^*$ are the SMLEs in the original sample and the bootstrap sample, respectively. The chosen bandwidth was $36 n^{-1/4}\approx7.2203$, so (according to the method of undersmoothing, see section \ref{section:CI_current_status}), smaller than the bandwidth used in Figure  \ref{fig:curdurecdf}, which uses a bandwidth for which the squared bias and variance are approximately in equilibrium. The $95\%$ asymptotic confidence intervals are given by:
$$
\left[\tilde g_{nh}(t)/\tilde g_{nh}(0)-U^*_{0.975},\tilde g_{nh}(t)/\tilde g_{nh}(0)-U^*_{0.025}\right],
$$
where $U^*_{0.025}$ and $U^*_{0.975}$ are the $2.5\%$ and $97.5\%$ percentiles of the bootstrap values $(\ref{bootstrap_Slama})$. The result is shown in Figure \ref{fig:CI_SMLE_Slama_df} and should be compared with the confidence intervals in part A of Figure 2, p.\ 1495 of \cite{keidslama:12}, based on a parametric (generalized gamma) model.

We have here the easiest, but also somewhat unusual, situation that the isotonic estimator is asymptotically equivalent to an ordinary non-isotonic estimator. The more usual situation is that we only can find a so-called ``toy estimator", which is asymptotically equivalent to the MLE or SMLE, but still contains parameters that have to be estimated. This is the case in the current status model as seen in section \ref{section:CI_current_status}.

In \cite{keidslama:12} and \cite{keiding:12} also parametric models are considered for analyzing these data. We compute the MLE as the slope of the smallest concave majorant of the data $\le 36$ months, where the $x$-values are only the strictly different values, and where we use the number of values at a tie as the increase of the second coordinate of the cusum diagram. In this way we get $618$ values $\le 36$, but only $248$ strictly different ones. It is clear that the SMLE has a somewhat intermediate position w.r.t.\ the parametric models and the fully nonparametric MLE, considered in
\cite{keidslama:12} and \cite{keiding:12}.
}
\end{example}

In the model considered here, the nonparametric MLE is inconsistent at zero and can therefore not be used as an estimate of $g(0)$ and therefore also not as an estimate of the survival function $g(x)/g(0)$, unless we also use penalization at zero. This is in contrast with the SMLE, which is consistent at zero due to the boundary correction. This difficulty with the inconsistency of the MLE at zero for the present model is discussed in \cite{keiding:12}. We solve this difficulty by adding a penalty at zero, as in \cite{WoodSun:93}, and maximize the function
\begin{align}
\label{criterion_LSfunction3}
\f_{\a,\l,\m}(f_1,\dots,f_m)&=\frac1n\sum_{i=1}^m w_i\log f_i
-\l\left\{\sum_{i=1}^{m} f_i\left(t_i-t_{i-1}\right)-1\right\}+\m\left(f_{i_0}-a\right)-\a(f_1-b),
\end{align}
where $b$ is the value of a consistent estimator at zero (for example, the value of the SMLE); we switch to the notation $f=(f_1,\dots,f_m)$ again (instead of using $g$) to be in line with the presentation in the preceding section.
The solution has to satisfy
\begin{align*}
\left\langle\nabla\f_{\hat\a,\hat\l,\hat\m}(\hat f),\hat f\right\rangle&=\frac1n\sum_{i=1}^m w_i-\hat\l\sum_{i=1}^m\left(t_i-t_{i-1}\right)\hat f_i+\hat\m\hat f_{i_0}-\hat\a \hat f_1\\
&=1-\hat\l\sum_{i=1}^m\left(t_i-t_{i-1}\right)\hat f_i+\hat\m\hat f_{i_0}-\a \hat f_1=1-\hat\l+\hat\m\,a-\hat\a b=0,
\end{align*}
and hence
\begin{equation}
\label{eq_lambda3}
\hat\m=\frac{\hat\l-1+\hat\a b}{a}\,.
\end{equation}

Analogously to Lemma \ref{lemma:one_lambda2}, we now get the following lemma.

\begin{lemma}
\label{lemma:two_equations}
Let $\hat f=(\hat f_1,\dots,\hat f_m)$ be the vector of slopes of the least concave majorant of the cusum diagram with points $(0,0)$ and
\begin{equation}
\label{cusum_2restrictions}
\left(\hat\a+\hat\l t_j,\sum_{i=1}^j\left\{\frac{w_i}{n}+(\hat\l-1+\hat\a b){\{i=i_0\}}\right\}\right),
\qquad j=1,\dots,m,
\end{equation}
where $(\hat\a,\hat\l)$ is the solution of the equations (in $(\a,\l)$)
\begin{align}
\label{two_equations}
\min_{1\le i\le i_0}\max_{i_0\le j\le m}\frac{\sum_{k=i}^j w_k/n+\l-1+\a b}{\l\left(t_j-t_{i-1}\right)+1_{\{i=1\}}\a}=a,\qquad\max_{i\ge 1}\frac{\sum_{j=1}^i w_j/n}{\a+\l t_i}=b.
\end{align}
Then $\hat f$ maximizes $\sum_{i=1}^m w_i\log f_i$, under the condition that $f$ is nonincreasing and the boundary conditions
$$
\sum_{i=1}^m f_i\left(t_i-t_{i-1}\right)=1,\qquad f_1=b\qquad\text{ and }f_{i_0}=a.
$$
\end{lemma}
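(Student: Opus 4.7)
The plan is to adapt the Lagrangian/Fenchel argument used in the proof of Lemma \ref{lemma:one_lambda2} by adding a third Lagrange multiplier $\hat\alpha$ for the boundary constraint $f_1 = b$. One maximizes the Lagrangian $\f_{\alpha,\lambda,\mu}$ in (\ref{criterion_LSfunction3}) over the cone $C_m = \{f_1 \ge \dots \ge f_m \ge 0\}$ and seeks a triple $(\hat\alpha,\hat\lambda,\hat\mu)\in\R\times\R_+\times\R$ for which the maximizer $\hat f$ satisfies all three side conditions $\sum_i \hat f_i(t_i-t_{i-1})=1$, $\hat f_{i_0}=a$, and $\hat f_1=b$. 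Evaluated on these three constraints, the equality part of the Fenchel conditions $\langle\nabla\f_{\hat\alpha,\hat\lambda,\hat\mu}(\hat f),\hat f\rangle=0$ reproduces relation (\ref{eq_lambda3}), i.e.\ $a\hat\mu=\hat\lambda-1+\hat\alpha b$.

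Next I apply the inequality part $\langle\nabla\f_{\hat\alpha,\hat\lambda,\hat\mu}(\hat f),g_j\rangle\le 0$ to the generators $g_j=(1,\dots,1,0,\dots,0)$ of $C_m$. Multiplying each summand by $\hat f_i$ on the blocks of constancy of $\hat f$, exactly as in the proof of Lemma \ref{lemma:one_lambda2}, and substituting $\hat f_{i_0}=a$ and $\hat f_1=b$ wherever the indicators $1_{\{i=i_0\}}$ and $1_{\{i=1\}}$ appear, together with the identity $a\hat\mu=\hat\lambda-1+\hat\alpha b$, turns the inequalities into
\begin{equation*}
\sum_{i=1}^j\left\{\frac{w_i}{n}+(\hat\lambda-1+\hat\alpha b)\,1_{\{i=i_0\}}\right\}\le \hat f_j\bigl(\hat\alpha+\hat\lambda t_j\bigr),\qquad j=1,\dots,m,
\end{equation*}
with equality at the right endpoints of blocks of constancy. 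This is exactly the geometric characterization of the least concave majorant of the cusum diagram (\ref{cusum_2restrictions}), identifying its left-continuous slopes with the $\hat f_j$.

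Finally, the two defining equations in (\ref{two_equations}) are the min-max/max representations (Theorem 1.4.4 in \cite{rwd:88}) of the slopes of this LCM, evaluated at index $i_0$ (equated to $a$, producing the first equation, whose $1_{\{i=1\}}\alpha$ correction in the denominator records the shift of the $x$-axis by $\hat\alpha$ at the very first step of the cusum) and at index $1$ (equated to $b$, producing the second equation; since $b>a$ and $\hat f$ is nonincreasing, the initial block of constancy must lie strictly to the left of $i_0$, so the penalty term does not enter its numerator). I expect the main obstacle to be precisely the algebraic bookkeeping in this inequality step: correctly combining the two point constraints with the relation (\ref{eq_lambda3}) to collapse the three separate Lagrange penalties into a single lifting $(\hat\lambda-1+\hat\alpha b)1_{\{i=i_0\}}$ on the vertical axis and a single offset $\hat\alpha$ on the horizontal axis of the cusum diagram. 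Once this simplification is in hand, the remainder of the argument is a routine repetition of the reasoning in Lemma \ref{lemma:one_lambda2}.
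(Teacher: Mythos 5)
Your route is the same as the paper's: the paper itself only derives the equality part of the Fenchel conditions for the three\nobreakdash-multiplier Lagrangian (\ref{criterion_LSfunction3}), obtains (\ref{eq_lambda3}), and then declares the rest ``analogous to Lemma \ref{lemma:one_lambda2}''; you are simply spelling out that analogy. Your reading of the two equations in (\ref{two_equations}) — the max--min slope of the shifted cusum at $i_0$ equated to $a$, and the slope from the origin at index $1$ equated to $b$, with the $1_{\{i=1\}}\a$ term in the denominator coming from the horizontal offset $x_j=\hat\a+\hat\l t_j$ and the penalty absent from the second equation because the first block of constancy ends before $i_0$ — is also correct.

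One displayed formula is wrong as written, and as stated that step would fail. Multiplying the summands of $\langle\nabla\f_{\hat\a,\hat\l,\hat\m}(\hat f),g_j\rangle\le0$ by $\hat f_i$ on blocks of constancy and substituting $\hat f_1=b$, $\hat f_{i_0}=a$ and $a\hat\m=\hat\l-1+\hat\a b$ gives
\[
\sum_{i=1}^j\left\{\frac{w_i}{n}+(\hat\l-1+\hat\a b)1_{\{i=i_0\}}\right\}
\;\le\;\hat\l\sum_{i=1}^j\bigl(t_i-t_{i-1}\bigr)\hat f_i+\hat\a b
\;=\;\sum_{i=1}^j\hat f_i\bigl(x_i-x_{i-1}\bigr),
\qquad x_0=0,\ x_i=\hat\a+\hat\l t_i,
\]
with equality at the right endpoints of blocks of constancy: the cusum lies below the \emph{cumulative integral} of the slope vector, which is the correct least-concave-majorant characterization. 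Your right-hand side $\hat f_j(\hat\a+\hat\l t_j)=\hat f_jx_j$ is not this quantity; since $\hat f$ is nonincreasing one has $\hat f_jx_j\le\sum_{i\le j}\hat f_i(x_i-x_{i-1})$, so the inequality you assert is strictly stronger and is generally false — at $j=m$, where equality in the correct characterization must hold, $y_m=\sum_i\hat f_i(x_i-x_{i-1})>\hat f_mx_m$ whenever $\hat f$ is nonconstant. With the right-hand side replaced by the cumulative sum, the identification of $\hat f$ with the slopes of the LCM of (\ref{cusum_2restrictions}) and the rest of your argument go through.
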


We now restrict the MLE of the density to have a value at zero, given by a consistent estimator at zero. There are several possible choices;  we took the value of the SMLE at zero for illustrative purposes. The resulting estimate of the survival function, based on the MLE restricted at zero to have the same value as the SMLE, is shown in Figure \ref{fig:MLE_restricted_at_zero}. It is also possible to take histogram-type estimates at zero if one wants to impose more lenient conditions. Next we can compute the $95\%$ confidence intervals again by the likelihood ratio method, where one restricts the MLE to have a value at zero, prescribed by the consistent estimate.
Using Lemma \ref{lemma:two_equations} we can then compute the LR tests again for the values of $f_{i_0}$.  The result is shown in part (b) of Figure \ref{fig:Slama_CI_df}, where we used the same asymptotic critical values as before.

\section{Computational aspects and concluding remarks}
\label{section:conclusion}
\setcounter{equation}{0}
There are several ways of computing the restricted MLE's. One way of computing the restricted MLE for the current status model was given in \cite{mouli_jon:01}, see the discussion following Remark \ref{remark:leftcont_curstat} in Section \ref{section:CI_current_status}. We computed the restricted MLE by first solving  equations (\ref{equation_mu}),(\ref{equation_mu2}) or (\ref{two_equations}) for the Lagrange multiplier $\hat\m$ or $\hat\a$ and $\hat\l$, and next computing in one step the left derivative of the greatest convex minorant, resp.\ the smallest concave majorant, of the cusum diagrams which were constructed using the Lagrange multipliers. So the iterative part of the algorithm is in determining the solution $\hat\m$ or $\hat\a$ and $\hat\l$. For the monotone density case it is not clear that a completely non-iterative method for computing the restricted MLE exists (as in the current status  model, if one adapts the definition in terms of inequalities in \cite{mouli_jon:01}). For solving the non-linear equations for $\hat\m$ or $\hat\a$ and $\hat\l$ in Lemma \ref{lemma:two_equations} we wrote C programs, which seems to work fine.

In practice we would recommend to use the methods based on the MLE or SMLE in conjunction; the intervals based on the LR test for the MLE seem pretty much on target, except perhaps for values close to the boundary, and use less assumptions. On the other hand, the intervals, based on the SMLE are narrower and based on asymptotically normal limit distributions, which enables the use of bootstrap methods in constructing the confidence intervals. Direct bootstrap methods have been shown to fail for the MLE, see \cite{kosorok:08} and \cite{sen_mouli_woodroofe:10}.

\section{Appendix A}
\label{section:appendix1}
\setcounter{equation}{0}

\begin{proof}[Proof of Theorem \ref{th:LR_current_status}]
Let $D_n$ be the smallest interval $[a_n,b_n)$ such that $\hat F_n$ and $\hat F_n^{(0)}$ coincide on $D_n^c$ and such that the boundary points of $D_n$ are points of jump of $\hat F_n$ and $\hat F_n^{(0)}$; we assume $\hat F_n$ and $\hat F_n^{(0)}$ to be right-continuous. Then $\hat\m_n=O_p(n^{-2/3})$ and, as argued in the proof of Lemma \ref{lemma:order_mu_CS}, the nearest points of jump to $t_0$ of $\hat F_n^{(0)}$ and $\hat F_n$ are at distance $O_p(n^{-1/3})$ of $t_0$.

Suppose $t_{\ell}>t_{i_1}$, where $(t_{k_1-1},t_{i_1}]$ is the interval around $t_0$ where $\hat F_n^{(0)}$ is constant. The maxmin characterization of $\hat F_n^{(0)}$ then gives
\begin{align*}
\hat F_n^{(0)}(t_{\ell})=\max_{i_0< k\le \ell}\min_{i\ge {\ell}}\frac{\sum_{j=k}^{i}\d_j}{i-k+1}\,.
\end{align*}
Note that the term $\hat\m a1_{\{i=i_0\}}$ does no longer occur in the minmax characterization, since the relevant intervals do not contain $i_0$. Likewise, if $t_{\ell}>t_{i_1'}$, where $(t_{k_1'-1},t_{i_1'}]$ is the interval around $t_0$ where $\hat F_n$ is constant, the maxmin characterization of $\hat F_n$ gives
\begin{align*}
\hat F_n(t_{\ell})=\max_{i_0<k\le \ell}\min_{i\ge{\ell}}\frac{\sum_{j=k}^{i}\d_j}{i-k+1}\,.
\end{align*}

Since we have $t_{i_1}-t_0=O_p(n^{-1/3})$ and $t_{i_1'}-t_0=O_p(n^{-1/3})$, we get therefore that the functions $\hat F_n$ and $\hat F_n^{(0)}$ coincide with high probability for values $t\ge t_0+Mn^{-1/3}$, is $M>0$ is sufficiently large. The same argument holds on intervals to the left of $t_0$. In other words: the length of the interval $D_n=[a_n,b_n)$ is of order $O_p(n^{-1/3})$. By the monotonicity of the functions $\hat F_n$ and $\hat F_n^{(0)}$ and the properties of the unrestricted $\hat F_n$, this also implies:
\begin{equation}
\label{restricted_unrestricted_bounds}
\sup_{t\in D_n}\bigl|\hat F_n(t)-F_0(t_0)\bigr|=O_p\left(n^{-1/3}\right)\qquad\mbox{ and }\qquad\sup_{t\in D_n}\bigl|\hat F_n^{(0)}(t)-F_0(t_0)\bigr|=O_p\left(n^{-1/3}\right).
\end{equation}

We  now have, by (\ref{restricted_unrestricted_bounds}) the Taylor development of the logarithm at the point $F_0(t_0)$, respectively $1-F_0(t_0)$, separately for $\log\hat F_n(t)$, $\log\hat F_n^{(0)}(t)$, etc., and the fact that the length of $D_n$ is of order $O_p(n^{-1/3})$,
\begin{align}
\label{curstat_expansion}
&2n\int_{t\in D_n}\left\{\d\log\frac{\hat F_n(t)}{\hat F_n^{(0)}(t)}+(1-\d)\log\frac{1-\hat F_n(t)}{1-\hat F_n^{(0)}(t)}\right\}\,d\P_n(t,\d)\nonumber\\
&=2n\int_{t\in D_n}\left\{\d\frac{\hat F_n(t)-\hat F_n^{(0)}(t)}{F_0(t_0)}-(1-\d)\frac{\hat F_n(t)-\hat F_n^{(0)}(t)}{1-F_0(t_0)}\right\}\,d\P_n(t,\d)\nonumber\\
&\qquad\qquad-n\int_{t\in D_n}\left\{\d\frac{\bigl\{\hat F_n(t)-F_0(t_0)\bigr\}^2}{F_0(t_0)^2}+(1-\d)\frac{\bigl\{\hat F_n(t)-F_0(t_0)\bigr\}^2}{\bigl\{1-F_0(t_0)\bigr\}^2}\right\}\,d\P_n(t,\d)\nonumber\\
&\qquad\qquad+n\int_{t\in D_n}\left\{\d\frac{\bigl\{\hat F_n^{(0)}(t)-F_0(t_0)\bigr\}^2}{F_0(t_0)^2}+(1-\d)
\frac{\bigl\{\hat F_n^{(0)}(t)-F_0(t_0)\bigr\}^2}{\bigl\{1-F_0(t_0)\bigr\}^2}\right\}\,d\P_n(t,\d)+O_p\left(n^{-1/3}\right)
\end{align}

For the first term on the right-hand side we get:
\begin{align}
\label{treatment_1st_term}
&2n\int_{t\in D_n}\left\{\d\frac{\hat F_n(t)-\hat F_n^{(0)}(t)}{F_0(t_0)}-(1-\d)\frac{\hat F_n(t)-\hat F_n^{(0)}(t)}{1-F_0(t_0)}\right\}\,d\P_n(t,\d)\nonumber\\
&=\frac{2n}{F_0(t_0)\{1-F_0(t_0)\}}\int_{t\in D_n}\{\d-F_0(t_0)\}\{\hat F_n(t)-\hat F_n^{(0)}(t)\}\,d\G_n(t,\d)
\end{align}
We also have:
\begin{align*}
&\int_{t\in D_n}\{\d-F_0(t_0)\}\bigl\{\hat F_n(t)-F_0(t_0)\bigr\}\,d\P_n(t,\d)=\int_{D_n}\{\hat F_n(t)-F_0(t_0)\}^2\,d\G_n(t),
\end{align*}
and
\begin{align*}
&\int_{t\in D_n}\{\d-F_0(t_0)\}\bigl\{\hat F_n^{(0)}(t)-F_0(t_0)\bigr\}\,d\P_n(t,\d)=\int_{D_n}\{\hat F_n^{(0)}(t)-F_0(t_0)\}^2\,d\G_n(t),
\end{align*}
since, by the characterizations of $\hat F_n$ and $\hat F_n^{(0)}$,
$$
\int_{t\in D_n}\{\d-\hat F_n(t)\}\bigl\{\hat F_n(t)-F_0(t_0)\bigr\}\,d\P_n(t,\d)=0,
$$
and
$$
\int_{t\in D_n}\{\d-\hat F_n^{(0)}(t)\}\bigl\{\hat F_n^{(0)}(t)-F_0(t_0)\bigr\}\,d\P_n(t,\d)=0,
$$
where we use that the increments over the $\delta$ coincide with the increments of $\hat F_n$ and $\hat F_n^{(0)}$ between jumps, except for $\hat F_n^{(0)}$ on the interval $[\t_-,\t_+)$ between the successive jumps $\t_-$, $\t_+$, covering $t_0$, where, however $\hat F_n^{(0)}(t)=F_0(t_0)$.
So we obtain:
\begin{align}
\label{result_1st_term}
&\int_{t\in D_n}\{\d-F_0(t_0)\}\bigl\{\hat F_n(t)-F_0(t_0)\bigr\}\,d\P_n(t,\d)\nonumber\\
&=\int_{D_n}\left\{\{\hat F_n(t)-F_0(t_0)\}^2-\{\hat F_n^{(0)}(t)-F_0(t_0)\}^2\right\}\,d\G_n(t).
\end{align}
By (\ref{treatment_1st_term}), this deals with the first term on the right-hand side of (\ref{curstat_expansion}).

To deal with the second and third term of (\ref{curstat_expansion}), we note that
\begin{align*}
&\int_{t\in D_n}\left\{\d\frac{\bigl\{\hat F_n(t)-F_0(t_0)\bigr\}^2}{F_0(t_0)^2}+(1-\d)\frac{\bigl\{\hat F_n(t)-F_0(t_0)\bigr\}^2}{\bigl\{1-F_0(t_0)\bigr\}^2}\right\}\,d\P_n(t,\d)\\
&=\int_{D_n}\left\{\hat F_n(t)\frac{\bigl\{\hat F_n(t)-F_0(t_0)\bigr\}^2}{F_0(t_0)^2}+(1-\hat F_n(t))\frac{\bigl\{\hat F_n(t)-F_0(t_0)\bigr\}^2}{\bigl\{1-F_0(t_0)\bigr\}^2}\right\}\,d\G_n(t)
\end{align*}
again by the fact that since the increments over the $\delta$ coincide with the increments of $\hat F_n$ (note that the integrands on the right-hand side are constant on the intervals of constancy of $\hat F_n$).
This can be written
\begin{align*}
&\int_{D_n}\frac{\hat F_n(t)-F_0(t_0)\bigr\}^2}{F_0(t_0)\{1-F_0(t_0)\}}\,d\G_n(t)
+\int_{D_n}\{\hat F_n(t)-F_0(t_0)\}^3\left\{\frac{1}{F_0(t_0)^2}-\frac1{\{1-F_0(t_0)\}^2}\right\}\,d\G_n(t)\\
&=\int_{D_n}\frac{\hat F_n(t)-F_0(t_0)\bigr\}^2}{F_0(t_0)\{1-F_0(t_0)\}}\,d\G_n(t)+O_p(n^{-4/3}).
\end{align*}
For the same reasons, but using in addition that $\hat F_n^{(0)}(t)=F_0(t_0)$ on the interval of constancy of $\hat F_n^{(0)}$, containing $t_0$, we get:
\begin{align*}
&\int_{t\in D_n}\left\{\d\frac{\bigl\{\hat F_n^{(0)}(t)-F_0(t_0)\bigr\}^2}{F_0(t_0)^2}+(1-\d)\frac{\bigl\{\hat F_n^{(0)}(t)-F_0(t_0)\bigr\}^2}{\bigl\{1-F_0(t_0)\bigr\}^2}\right\}\,d\P_n(t,\d)\\
&=\int_{D_n}\frac{\hat F_n^{(0)}(t)-F_0(t_0)\bigr\}^2}{F_0(t_0)\{1-F_0(t_0)\}}\,d\G_n(t)+O_p(n^{-4/3}).
\end{align*}

Combining the preceding results, we get:
\begin{align}
\label{curstat_expansion2}
&2n\int_{t\in D_n}\left\{\d\log\frac{\hat F_n(t)}{\hat F_n^{(0)}(t)}+(1-\d)\log\frac{1-\hat F_n(t)}{1-\hat F_n^{(0)}(t)}\right\}\,d\P_n(t,\d)\nonumber\\
&=\frac{n}{F_0(t_0)\{1-F_0(t_0)\}}\int_{D_n}\left\{\{\hat F_n(t)-F_0(t_0)\}^2-\{\hat F_n^{(0)}(t)-F_0(t_0)\}^2\right\}\,d\G_n(t)
+O_p\left(n^{-1/3}\right)\nonumber\\
&=\frac{ng(t_0)}{F_0(t_0)\{1-F_0(t_0)\}}\int_{D_n}\left\{\{\hat F_n(t)-F_0(t_0)\}^2-\{\hat F_n^{(0)}(t)-F_0(t_0)\}^2\right\}\,dt+O_p\left(n^{-1/3}\right).
\end{align}

This means that the dominant term of the log likelihood ratio equals
$$
L_n\stackrel{\text{\small def}}=\frac{g(t_0)}{F_0(t_0)\{1-F_0(t_0)\}}\int_{n^{1/3}(a_n-t_0)}^{n^{1/3}(b_n-t_0)}\left\{X_n(t)^2-Y_n(t)^2\right\}\,dt,
$$
where $X_n$ and $Y_n$ are as defined on p.\ 1723 of \cite{mouli_jon:01}:
$$
X_n(t)=n^{1/3}\bigl\{\hat F_n(t_0+n^{-1/3}t)-F_0(t_0)\bigr\},\qquad Y_n(t)=n^{1/3}\bigl\{\hat F_n^{(0)}(t_0+n^{-1/3}t)-F_0(t_0)\bigr\},
$$
see also Theorem 2.4 on p.\ 1710 of \cite{mouli_jon:01}. The resulting convergence of $L_n$ to the universal limit distribution $\mathbb D$ now follows from the joint convergence of $(X_n,Y_n)$ on bounded intervals, as stated in part B of Theorem 2.4 of \cite{mouli_jon:01}, together with Brownian scaling.
\end{proof}

\section{Appendix B}
\label{section:appendix2}
\setcounter{equation}{0}

\begin{proof}[Proof of Theorem \ref{th:LR_monotone_dens}]
We extend the values $\hat f_{ni}$ and $\hat f_{ni}^{(0)}$ of the solution $\hat f_n$ and $\hat f_n^{(0)}$ as vectors to left-continuous functions $\hat f_n$ and $\hat f_n^{(0)}$ on $[0,\infty)$. 
Let $D_n$ be the smallest interval $(a_n,b_n]$ such that $\hat f_n$ and $\hat f_n^{(0)}$ have the same points of jump on $D_n^c$ and such that the boundary points of $D_n$ are points of jump of $\hat f_n$ and $\hat f_n^{(0)}$ (see Remark \ref{remark:points_of_jump}). This means that for $t\notin D_n$:
\begin{align*}
\hat f_n^{(0)}(t)=
\frac1{1+\hat\m_n a}\min_{i: t_i\le t}\,\max_{j:t_j\le t}\frac{\sum_{k=i}^j w_k/n}{t_j-t_{i-1}}
=\frac{\hat f_n(t)}{1+\hat\m_n a},
\end{align*}
since the scale of first coordinates of the cusum diagram for $\hat f_n^{(0)}$ has the factor $1+\hat\m_n a$.
Since $\hat\m_n=O_p(n^{-2/3})$, we get:
\begin{align*}
&2n\int_{D_n^c}\,\log\frac{\hat f_n(t)}{\hat f_n^{(0)}(t)}\,d\F_n(t)=2n\log\{1+a\hat\m_n\}\int_{D_n^c}\,d\F_n(t)
=2n a\hat\m_n\int_{D_n^c}\,d\F_n(t)+O_p\left(n^{-1/3}\right).
\end{align*}
The function $\hat f_n^{(0)}$ must satisfy
$$
\int \hat f_n^{(0)}(x)\,dx=1.
$$
So
$$
\int_{D_n}\hat f_n^{(0)}(t)\,dt+\int_{D_n^c}\hat f_n^{(0)}(t)\,dt=1,
$$
and hence, using $\hat\m_n=O_p(n^{-2/3})$,
\begin{align*}
&2n\int_{D_n}\hat f_n^{(0)}(t)\,dt=2n\left\{1-\int_{D_n^c}\hat f_n^{(0)}(t)\,dt\right\}
=2n\left\{1-\{1+\hat\m_n a\}^{-1}\int_{D_n^c}\,d\F_n(t)\right\}\\
&=2n\left\{\int_{D_n}\,d\F_n(t)+\hat\m_n a\int_{D_n^c}\,d\F_n(t)\right\}+O_p\left(n^{-1/3}\right)\\
&=2n\left\{\int_{D_n}\hat f_n(t)\,dt+\hat\m_n a\int_{D_n^c}\,d\F_n(t)\right\}+O_p\left(n^{-1/3}\right)\\
\end{align*}
So we get:
\begin{align*}
&2n\hat\m_n a\int_{D_n^c}\,d\F_n(t)=2n\int_{D_n}\{\hat f_n^{(0)}(t)-\hat f_n(t)\}\,dt+O_p\left(n^{-1/3}\right).
\end{align*}

So we obtain
\begin{align*}
&2n\int\log \frac{\hat f_n(t)}{\hat f_n^{(0)}(t)}\,d\F_n(t)\\
&=2n\int_{D_n}\log \frac{\hat f_n(t)}{\hat f_n^{(0)}(t)}\,d\F_n(t)
+2n\int_{D_n^c}\log \frac{\hat f_n(t)}{\hat f_n^{(0)}(t)}\,d\F_n(t)\\
&=2n\int_{D_n}\log \frac{\hat f_n(t)}{\hat f_n^{(0)}(t)}\,d\F_n(t)+
2n a\hat\m_n\int_{D_n^c}\,d\F_n(t)+O_p\left(n^{-1/3}\right)\\
&=2n\int_{D_n}\log \frac{\hat f_n(t)}{\hat f_n^{(0)}(t)}\,d\F_n(t)
-2n\int_{D_n}\{\hat f_n(t)-\hat f_n^{(0)}(t)\}\,dt+O_p\left(n^{-1/3}\right),
\end{align*}
by which  we have reduced the log likelihood integrals on the shrinking neighborhood $D_n$.

We now proceed as in the proof of Theorem \ref{th:LR_current_status}.
We expand the logarithm in a neighborhood of the point $f_0(t_0)$. This yields:
\begin{align*}
&2n\int_{D_n}\log \frac{\hat f_n(t)}{\hat f_n^{(0)}(t)}\,d\F_n(t)
-2n\int_{D_n}\bigl(\hat f_n(t)-\hat f_n^{(0)}(t)\bigr)\,dt\\
&=2n\int_{D_n}\frac{\hat f_n(t)-f_0(t_0)}{f_0(t_0)}\,d\F_n(t)-2n\int_{D_n}\frac{\hat f_n^{(0)}(t)-f_0(t_0)}{f_0(t_0)}\,d\F_n(t)-2n\int_{D_n}\{\hat f_n(t)-\hat f_n^{(0)}(t)\}\,dt\\
&\qquad-n\int_{D_n}\frac{\bigl\{\hat f_n(t)-f_0(t_0)\bigr\}^2}{f_0(t_0)^2}\,d\F_n(t)
+n\int_{D_n}\frac{\bigl\{\hat f_n^{(0)}(t)-f_0(t)\bigr\}^2}{f_0(t_0)^2}\,d\F_n(t)+O_p\left(n^{-1/3}\right)\\
&=2n\int_{D_n}\frac{\hat f_n(t)-f_0(t_0)}{f_0(t_0)}\,d\F_n(t)-2n\int_{D_n}\frac{\hat f_n^{(0)}(t)-f_0(t_0)}{f_0(t_0)}\,d\F_n(t)-2n\int_{D_n}\{\hat f_n(t)-\hat f_n^{(0)}(t)\}\,dt\\
&\qquad-n\int_{D_n}\frac{\bigl\{\hat f_n(t)-f_0(t_0)\bigr\}^2}{f_0(t_0)}\,dt
+n\int_{D_n}\frac{\bigl\{\hat f_n^{(0)}(t)-f_0(t)\bigr\}^2}{f_0(t_0)}\,dt+O_p\left(n^{-1/3}\right)\\
\end{align*}
We now have:
\begin{align*}
&2n\int_{D_n}\frac{\hat f_n(t)-f_0(t_0)}{f_0(t_0)}\,d\F_n(t)-2n\int_{D_n}\{\hat f_n(t)-f_0(t_0)\}\,dt\\
&=2n\int_{D_n}\frac{\hat f_n(t)-f_0(t_0)}{f_0(t_0)}\hat f_n(t)\,dt-2n\int_{D_n}\{\hat f_n(t)-f_0(t_0)\}\,dt\\
&=2n\int_{D_n}\frac{\{\hat f_n(t)-f_0(t_0)\}^2}{f_0(t_0)}\,dt,
\end{align*}
and similarly get:
\begin{align*}
&2n\int_{D_n}\frac{\hat f_n^{(0)}(t)-f_0(t_0)}{f_0(t_0)}\,d\F_n(t)-2n\int_{D_n}\{\hat f_n^{(0)}(t)-f_0(t_0)\}\,dt\\
&=2n\int_{D_n}\frac{\{\hat f_n^{(0)}(t)-f_0(t_0)\}^2}{f_0(t_0)}\,dt,
\end{align*}
using $\hat f_n(t)=f_0(t_0)$ on the interval of constancy of $\hat f_n^{(0)}$, covering the point $t_0$.

So we can conclude:
\begin{align*}
&2n\int\log\frac{\hat f_n(t)}{\hat f_n^{(0)}(t)}\,d\F_n(t)\\
&=\frac{n}{f_0(t_0)}\int_{D_n}\left\{\{\hat f_n(t)-f_0(t_0)\}^2-\{\hat f_n^{(0)}(t)-f_0(t_0)\}^2\right\}\,dt+
O_p\left(n^{-1/3}\right)\\
&=\frac{n^{2/3}}{f_0(t_0)}\int_{n^{1/3}(a_n-t_0)}^{n^{1/3}(b_n-t_0)}\left\{\{\hat f_n(t_0+n^{-1/3}t)-f_0(t_0)\}^2-\{\hat f_n^{(0)}(t_0+n^{-1/3}t)-f_0(t_0)\}^2\right\}\,dt\\
&\qquad\qquad\qquad\qquad\qquad\qquad\qquad\qquad\qquad\qquad\qquad\qquad\qquad\qquad\qquad\qquad +O_p\left(n^{-1/3}\right),
\end{align*}
where $D_n=(a_n,b_n)$. 

Let $W$ be standard two-sided Brownian motion on $\R$, and let $\a=\sqrt{f_0(t_0)}$ and $\b=\frac12|f_0'(t_0)|$. The process
$$
t\mapsto \left(n^{1/3}\{\hat f_n(t_0+n^{-1/3}t)-f_0(t_0),n^{1/3}\{\hat f_n^{(0)}(t_0+n^{-1/3}t)-f_0(t_0)\}\right)
$$
converges on bounded intervals in the Skohorod topology to the process $(S_{\a,\b},S_{\a,\b}^{(0)})$ on $\R$, where $S_{\a,\b}$ is the slope of the concave majorant of the process
\begin{equation}
\label{ab_process}
t\mapsto X_{\a,\b}(t)\stackrel{\text{\small def}}=aW(t)-\b t^2,\qquad t\in\R,
\end{equation}
and where $S_{\a,\b}^{(0)}$ is defined by $S_{\a,\b}^-(t)\vee0$ for $t<0$, where $S_{\a,\b}^-$ is the slope of the process (\ref{ab_process}), restricted to the interval $(-\infty,0)$, and by $S_{\a,\b}^+(t)\wedge0$, where $S_{\a,\b}^+$ is the slope of the process (\ref{ab_process}), restricted to the interval $[0,\infty)$. The notation $X_{\a,\b}$ was introduced in \cite{mouli_jon:01}, p.\ 1706.

We now follow the Brownian scaling argument on p.\ 1724 of \cite{mouli_jon:01}. Let
$$
X(t)= X_{1,1}(t),\qquad t\in\R.
$$
Then
$$
X_{\a,\b}(t)\stackrel{{\cal D}}=\frac{\a^{4/3}}{\b^{1/3}}X\left((\b/\a)^{2/3}t\right),\qquad t\in\R.
$$
It follows that
\begin{align*}
\left(S_{\a,\b},S_{\a,\b}^{(0)}\right)\stackrel{{\cal D}}=\a^{2/3}\b^{1/3}\left(S_{1,1}((\b/\a)^{2/3}t),S^{(0)}_{1,1}((\b/\a)^{2/3}t)\right).
\end{align*}
So we get in the limit, noting that $S_{\a,\b}$ and $S_{\a,\b}^{(0)}$ only differ on a bounded interval,
\begin{align*}
&\frac1{\a^2}\int\left\{S_{\a,\b}(t)^2-S_{\a,\b}^{(0)}(t)^2\right\}\,dt
=(\b/\a)^{2/3}\int\left\{S_{1,1}((\b/\a)^{2/3}t)^2-S^{(0)}_{1,1}((\b/\a)^{2/3}t)^2\right\}\,dt\\
&=\int\left\{S_{1,1}(t)^2-S^{(0)}_{1,1}(t)^2\right\}\,dt.
\end{align*}

\end{proof}

\bibliographystyle{amsplain}
\bibliography{cupbook}

\end{document}